\renewcommand*\l@section{\@dottedtocline{1}{1.5em}{2.3em}}
\theoremstyle{plain}
\newtheorem{theorem}{Theorem}
\newtheorem{proposition}[theorem]{Proposition}
\newtheorem{lemma}[theorem]{Lemma}
\newtheorem{example}[theorem]{Example}
\newtheorem{corollary}[theorem]{Corollary}
\theoremstyle{definition}
\newtheorem{definition}{Definition}
\newtheoremstyle{myrem}
 {3pt}
 {3pt}
 {\normalsize}
 { }
 {\itshape}
 {:}
 { }
 {}
 \theoremstyle{myrem}
 \newtheorem{remark}{Remark}
 \appto\remark{\leftskip\parindent}
 \appto\remark{\rightskip\parindent}
\numberwithin{equation}{section}
\numberwithin{theorem}{section}
\begin{document}

\begin{center}
{\Large {\textbf {
Differential  calculus  for    free  algebra  and  constrained  homology   
}}}
 \vspace{0.58cm}\\

Shiquan Ren


\bigskip

\begin{quote}
\begin{abstract}

In  {[Discrete  differential  calculus  on  simplicial  complexes and  constrained homology,  
  \emph{Chin.  Ann.  Math.  Ser.  B}  {\bf44}(4),  615-640,  2023]},  the  constrained  (co)homology  for  simplicial  complexes   and  independence hypergraphs  is  constructed  via  differential  calculus  on  discrete  sets. 
  In  this  paper,  we   study  the  differential  calculus  for  free  algebra  and  subsequently 
  study the  homomorphisms  of  constrained  (co)homology  induced  by  inclusions  of  simplicial  complexes  and  independence hypergraphs.  
  We  apply  the  differential  calculus  to hypergraphs.  
  We  realize  simplicial  complexes  and  independence  
  hypergraphs  as  certain  invariant  traces  of  hypergraphs. 
  As  an  application,  
  we  give  the  constrained persistent  (co)homology  for  filtrations  of  simplicial  complexes  and 
  filtrations  of  independence  hypergraphs.  
  \end{abstract}

\bigskip

{ {\bf 2020 Mathematics Subject Classification.}  	Primary  55U10,  	55U15,  Secondary  	53A45,  08A50  
}

{{\bf Keywords and Phrases.}   hypergraphs,   simplicial  complexes,   discrete calculus,  homology  }

\end{quote}

\end{center}

\bigskip

\section{Introduction}

Since  1990's,  the  topic   to    extend  the  classical  notion  of  
simplicial  (co)homology  to  various  combinatorial  settings  have been  extensively  studied.  
 For  example, 
  the  weighted  homology  of  weighted  simplicial  complexes  by   Dawson, R. J. MacG.  \cite{daw},  
  the  path  complex  and  path  homology for  digraphs  by  A.  Grigor'yan,  Y.  Lin, Y.  Muranov  and  S.-T. Yau  \cite{lin2,lin3,lin6},   
 etc.

In  1994,   A. Dimakis  and    F. M\"{u}ller-Hoissen  \cite{d1,d2}    initially  studied  the  differential  calculus  on  discrete  sets.     Since  2010's,  A.  Grigor'yan,  Y.  Lin, Y.  Muranov   and   S.-T. Yau  \cite{lin2,lin3,lin6} further  investigated  the  differential  calculus  on  discrete  sets  and  constructed  the  
path  homology  theory  for  digraphs.  
   In  2023,  the  present author  \cite{camb2023}   investigated  the     differential  calculus  on  vertices   and   constructed  the   constrained   homology  for  simplicial  complexes 
   as  well as  the  constrained  cohomology  for  independence  hypergraphs.      
    Later,  the  results  in   \cite{camb2023}  are   applied  to     some  K\"unneth-type  formulae  for  random  hypergraphs  and  random  simplicial  complexes  by   C.  Wu,  J.  Wu  and 
     the  present  author   \cite{jktr2023}.

     In  this   paper,   we   investigate   the  homomorphisms  of  constrained  homology  induced  by  inclusions  of  augmented  simplicial  complexes  as  well  as  the  homomorphisms  of  constrained  cohomology  induced  by  inclusions  of   augmented  independence hypergraphs     by  
    studying   the   differential  calculus  for  the  free  algebra.    
    By  applying the  differential  calculus,  we  prove  that  certain  invariant  traces  
    of  a   hypergraph  are    simplicial  complexes  and  independence  hypergraphs.   
    This  paper  is        a  subsequent  work  of   of   \cite{camb2023}.

 Firstly,  as  
 the  algebraic  foundation   of  this  paper,  
 we  study  the  differential  calculus  for  the  free  algebra 
 and  construct  some chain  complexes  and  chain  maps    in  Sections~\ref{s2}-\ref{s4}.

    Let  $S$  be  a   set  whose  elements are  vertices. 
    Let  $[2^S]$  be  the  collection  of  all  the   finite  subsets  of  $S$  including  the  epmtyset  
    $\emptyset$.   
    Let  $R$  be  a  commutative  ring   with  unit.  
    Let  $ R\langle  S\rangle_n $  be  the    free  $R$-module  generated  
    by  the  words  on  $S$  of  length  $n+1$  where  the  emptyset   $\emptyset$  is  
    regarded  as  a  word  of  length  $0$.   
    For  any  $s\in  S$,  consider  the  $R$-linear  map   
   $ \frac{\partial }{\partial  s}:     R\langle  S\rangle_n\longrightarrow    R\langle  S\rangle_{n-1} $   
    given  by      \footnote[1]{For  any  $s,t\in  S$,  we  use  the  notation  $\delta(s,t)=1$  if  $s=t $  and  $\delta(s,t)= 0$  otherwise.  }
    \begin{eqnarray*}
     \frac{\partial }{\partial  s}(s_0s_1\ldots  s_n) = \sum_{i=0}^n  (-1)^i\delta(s,s_i)  s_0\ldots\widehat{s_i} \ldots  s_n
    \end{eqnarray*}
    and  the  $R$-linear  map
    $ds:    R\langle  S\rangle_n\longrightarrow    R\langle  S\rangle_{n+1} $ 
    given by  
    \begin{eqnarray*}
    ds  (s_0s_1\ldots  s_n)=\sum_{i=0}^{n+1}  (-1)^i s_0\ldots s_{i-1} s s_i\ldots   s_n. 
    \end{eqnarray*}
   We  prove  the  simplicial  identities  in   Proposition~\ref{le-0a.1}
    and      the  Leibniz  rules  in   Proposition~\ref{pr-mzmza}    
    for  $\partial/\partial  s$  and  $ds$.

        Consider  the  exterior  algebras   $\wedge  ( {\partial}/{\partial  s}\mid  s\in  S  )$
      and  $\wedge  (d  s \mid  s\in  S  )$  generated  by   $ {\partial}/{\partial  s}$  
      for  all  $s\in  S$  and  by  $ds$  for  all  $s\in  S$  respectively.   
      For  $\alpha\in   \wedge^{2k+1} ( {\partial}/{\partial  s}\mid  s\in  S  )$  and    $\omega\in  \wedge^{2k+1}  (  d s \mid  s\in  S  )$,  
      we  have  chain  complexes   
      \begin{eqnarray*}
 (\tilde R\langle  S\rangle_*, \alpha,q): &&  \cdots    \overset{\alpha}{\longrightarrow}   R\langle   S\rangle_{p( 2k+1)+q}\overset{\alpha }{\longrightarrow}  R\langle   S\rangle_{(p-1)(2k+1)+q}\overset{\alpha }{\longrightarrow}   \cdots,\\
 (\tilde R\langle  S\rangle_*, \omega,q): &&  \cdots    \overset{\omega }{\longrightarrow}  R\langle   S\rangle_{p ( 2k+1)+q}\overset{\omega }{\longrightarrow}  R\langle   S\rangle_{(p+1)(2k+1)+q}\overset{\omega }{\longrightarrow}   \cdots.  
   \end{eqnarray*}
        For     $\beta\in   \wedge^{2l}  ({\partial}/{\partial  s}\mid  s\in  S )$  and    $\mu\in  \wedge^{2l}  (  d s \mid  s\in  S  )$,   
        we  have  chain  maps    $
 \beta:   (\tilde R\langle  S\rangle_*, \alpha,q)\longrightarrow    (\tilde R\langle  S\rangle_*, \alpha,q-2l) $ 
  and  
  $ \mu:   (\tilde R\langle  S\rangle_*, \omega,q)\longrightarrow    (\tilde R\langle  S\rangle_*, \omega,q+2l)$.  
  In  Theorem~\ref{pr-bmaq17},  
  we  prove  the  functoriality  of  the  chain  maps  $\beta$  and  $\mu$.  
In  particular, for  $R$  the  complex  numbers  $\mathbb{C}$,   we  give  some  duality  for  $(\tilde {\mathbb{C}}\langle  S\rangle_*, \alpha,q)$  and  $(\tilde {\mathbb{C}}\langle  S\rangle_*, \omega,q)$  in  Proposition~\ref{pr-mvxza1}  as  a  by-product  of  \cite[ Eq.  (3.5),   Lemma~3.2  and  Proposition~3.1]{camb2023}.

Suppose  $S$  has  a  total  order.  We  call a  word  $s_0s_1\ldots  s_n$  on  $S$  
{\it  simplicial  acyclic}  if   $s_0,s_1,\ldots,  s_n$  are  distinct  and  
$s_0\prec  s_1\prec\cdots  \prec  s_n$. 
  Consider  the     free  $R$-module  
$\mathcal{F}\langle  S\rangle_n$   
spanned  by  the  simplicial  acyclic  words  of  length  $n+1$  on  $  S$.  
In  Section~\ref{s4},  
we  construct  
   \begin{eqnarray*}
  (\tilde {\mathcal{F}}\langle  S\rangle_*, \alpha,q):  &&  \cdots     \overset{\alpha}{\longrightarrow}  {\mathcal{F}}\langle   S\rangle_{p ( 2k+1)+q}\overset{\alpha }{\longrightarrow}  {\mathcal{F}}\langle   S\rangle_{(p-1)(2k+1)+q}\overset{\alpha }{\longrightarrow}   \cdots 
  \end{eqnarray*}
  as   a   sub-chain  complex  of  $(\tilde R\langle  S\rangle_*, \alpha,q)$  for  
 $\alpha\in   \wedge^{2k+1} ( {\partial}/{\partial  s}\mid  s\in  S  )$  
 and  construct  
   \begin{eqnarray*}
(\tilde {\mathcal{F}}\langle  S\rangle_*, \omega,q): &&  \cdots    \overset{\omega }{\longrightarrow}  {\mathcal{F}}\langle   S\rangle_{p ( 2k+1)+q}\overset{\omega }{\longrightarrow}  {\mathcal{F}}\langle   S\rangle_{(p+1)(2k+1)+q}\overset{\omega_{p+1}}{\longrightarrow}   \cdots
   \end{eqnarray*}
 as  a  quotient  chain  complex  of   $(\tilde R\langle  S\rangle_*, \omega,q)$  
 for       $\omega\in  \wedge^{2k+1}  (  d s \mid  s\in  S  )$.  
     For   $\beta\in   \wedge^{2l}  ({\partial}/{\partial  s}\mid  s\in  S )$  and    $\mu\in  \wedge^{2l}  (  d s \mid  s\in  S  )$,   
        we  have  chain  maps    $
 \beta:   (\tilde {\mathcal{F}}\langle  S\rangle_*, \alpha,q)\longrightarrow    (\tilde {\mathcal{F}}\langle  S\rangle_*, \alpha,q-2l) $ 
  and  
  $ \mu:   (\tilde {\mathcal{F}}\langle  S\rangle_*, \omega,q)\longrightarrow    (\tilde {\mathcal{F}}\langle  S\rangle_*, \omega,q+2l)$.  
 We  give  the  functoriality  of  these  chain  maps  $\beta$  and  $\mu$  
 in  Proposition~\ref{pr-bmaq17-co}  as  an  analogue  of  Theorem~\ref{pr-bmaq17}.   
 We  study  the  functoriality  of  the  induced  maps  of $\wedge  (\frac{\partial}{\partial  s}\mid  s\in  S  )$
      and  $\wedge  (d  s \mid  s\in  S  )$  on  $\mathcal{F}\langle  S\rangle_n$   
 in  Proposition~\ref{pr-008.990}.

       Secondly,   as  the  main  part   of  this  paper,  we  apply  the  differential  calculus  for  the  free  algebra  to  
       study  the  functoriality  of  the  constrained  (co)homology  of  simplicial  complexes  and  
       independence  hypergraphs
        in  Sections~\ref{s---5}-\ref{s---7}.

     We  define  an  {\it augmented  hypergraph}  $\mathcal{H}$  as  a  subset  of  $[2^S]$
      and  call  an  element    in  $\mathcal{H}$  consisting  of  $n+1$  vertices  an   {\it   $n$-hyperedge}  for  $n\geq  -1$. 
      We  call  $\mathcal{H}$  an  {\it  augmented  simplicial  complex}  and denote  it as  
      $\mathcal{K}$  if  for  any  $\sigma\in \mathcal{H}$  and  any  nonempty  subset  $\tau$  
      of  $\sigma$,  it  holds  $\tau\in  \mathcal{H}$.  
      We  call  $\mathcal{H}$  an  {\it  augmented  independence  hypergraph}  and denote  it as  
      $\mathcal{L}$  if  for  any  $\sigma\in \mathcal{H}$  and  any     superset  $\tau$  
      of  $\sigma$,  it  holds  $\tau\in  \mathcal{H}$.  
      For  any  augmented  hypergraph  $\mathcal{H}$  on  $S$,  
      we  have  its  associated   augmented   simplicial  complex  $\Delta\mathcal{H}$
      as  the  smallest  augmented  simplicial  complex  on  $S$  containing  $\mathcal{H}$,  
      its lower-associated  augmgented  simplicial complex  $\delta\mathcal{H}$
      as  the  largest  augmented  simplicial  complex  on  $S$  contained  in  $\mathcal{H}$,  
      its  associated  augmented  independence  hypergraph  $\bar\Delta\mathcal{H}$
      as  the  smallest  augmented  independence  hypergraph  on  $S$ containing  $\mathcal{H}$,  
      its  lower-associated  augmented independence  hypergraph  $\bar\delta\mathcal{H}$
      as  the  largest  augmented  independence  hypergraph  on  $S$ contained  in  $\mathcal{H}$,  
      its   global   complement   $\gamma_S\mathcal{H}$ as  $[2^S]\setminus \mathcal{H}$  
       and  its   local  complement  $\Gamma_S\mathcal{H}$  consisting  of  all the  
       hyperedges  $S\setminus\sigma$  for  $\sigma\in \mathcal{H}$.        
      The     map  algebra   is  the  semigroup   generated  by  all  the  compositions  of  
      the  operations  $\Delta$,  $\delta$,  
      $\bar\Delta$,  $\bar\delta$,  $\gamma_S$,  $\Gamma_S$  
      equipped  with  the  binary  operations  the  intersection  $\cap $,  the  union  $\cup$,  
      and  the join  $*$.
      Let  $T$  be  a  subset  of  $S$.  
      The    trace   of  $\mathcal{H}$  on  $T$  
      is  the  augmented  hypergraph  consisting  of  the  hyperedges  
      $\sigma\cap  T$  where  $\sigma\in \mathcal{H}$.  
      In  Theorem~\ref{thh=5.1},  we  give  some  partial  relations  between  the  trace operation  
      and   the   map  algebra.

         Let  $\mathcal{H}$  be  an  augmented  hypergraph  on  $S$.   
         Let  $R_n(\mathcal{H}) $   be  the  
     free  $R$-module  spanned  by  the  $n$-hyperedges  in  $\mathcal{H}$.  
      Then  
       $R_n(\mathcal{H}) $ 
       is  a  sub-$R$-module  of  
       $\mathcal{F}\langle  S\rangle  _n$.  
        Let   $    R_*(\mathcal{H})= \oplus_{n\geq   0}  R_n(\mathcal{H})$  and  
          $ \tilde   R_*(\mathcal{H})= \oplus_{n\geq   -1}  R_n(\mathcal{H})$. 
          Then  
              $ \tilde   R_*(\mathcal{H})$ 
              is  a  graded  sub-$R$-module  of   $\tilde {\mathcal{F}}\langle  S\rangle_*$.                
         For  any   $s\in  S$,    
         we  say  that 
          $      R_*(\mathcal{H})$  is      $\partial/\partial  s$-invariant    if  
 $\partial_i/\partial  s:       R_{n+1}(\mathcal{H})\longrightarrow    R_{n}(\mathcal{H})$  is  well-defined  for   any      $n  \in \mathbb{N} $  and  any  
$0\leq   i\leq   n+1$ 
and     say  that    $      R_*(\mathcal{H})$  is    $d  s$-invariant    if  
$d_i  s:  R_n(\mathcal{H})\longrightarrow  R_{n+1}(\mathcal{H})$  is  well-defined  for   any   $n \in \mathbb{N}$  and  any  
$0\leq   i\leq   n+1$.    
 Let  $S( \mathcal{H},\partial)$   be   the  subset  of  $S$  consisting  of  all  $s\in  S$  such  that  
    $    R_*(\mathcal{H})$  is   $\partial/\partial  s$-invariant  and   let   $S(\mathcal{H},d)$   be   the  subset  of  $S$  consisting  of  all  $s\in  S$  such  that  
    $      R_*(\mathcal{H})$  is   $d  s$-invariant. 
      In   Theorem~\ref{pr-5.5aaa},  
      we  prove  (1)  for  any  augmented  hypergraph  $\mathcal{H}$,    the  trace  
      $\mathcal{H}\mid_{S( \mathcal{H},\partial)}$
      is  an  augmented   simplicial  complex    on  $S( \mathcal{H},\partial)$;  
      (2)  for  any  hypergraph  $\mathcal{H}$,  the  trace  $\mathcal{H}\mid _{S(\mathcal{H},d)}$ 
      is an     independence  hypergraph  on  $S(\mathcal{H},d)$.

      Let  $\mathcal{K}$  be  an  augmented  simplicial  complex  on  $S$.  
      Let  $\mathcal{L}$  be  an  augmented  independence  hypergraph  on   $S$.  
      In  Section~\ref{s---7},  
       For  $\alpha\in   \wedge^{2k+1} ( {\partial}/{\partial  s}\mid  s\in  S  )$  and    $\omega\in  \wedge^{2k+1}  (  d s \mid  s\in  S  )$,
       we  construct  
      \begin{eqnarray*}
       (\tilde  R_*(\mathcal{K}), \alpha,q): ~~~   \cdots    \overset{\alpha }{\longrightarrow} R_{p ( 2k+1)+q}(\mathcal{K})  \overset{\alpha }{\longrightarrow}  R_{(p-1)(2k+1)+q}(\mathcal{K})  \overset{\alpha }{\longrightarrow}   \cdots 
      \end{eqnarray*}
      as  a  sub-chain  complex  of  
        $(\tilde{\mathcal{F}}\langle  S\rangle_*, \alpha,q)$
         and  construct  
         \begin{eqnarray*}
        (\tilde  R_*(\mathcal{L}), \omega,q): ~~~   \cdots      \overset{\omega }{\longrightarrow}R_{p( 2k+1)+q}(\mathcal{L})  \overset{\omega }{\longrightarrow} R_{(p+1)(2k+1)+q}(\mathcal{L})  \overset{\omega }{\longrightarrow}   \cdots
         \end{eqnarray*}
         as  a  sub-chain  complex  of  $(\tilde {\mathcal{F}}\langle  S\rangle_*, \omega,q)$.   
          The     constrained  homology  $H_*(\mathcal{K},\alpha,q)$  of  $\mathcal{K}$  is  
          defined  as  the  homology  group  of  $(\tilde  R_*(\mathcal{K}), \alpha,q)$
          (cf.  \cite[Definition~4.3]{camb2023}) 
          and  the  constrained  cohomology  $ H^*(\mathcal{L},\omega,q)$    of  $\mathcal{L}$  
          is  defined  as  the  homology  group  of  $ (\tilde  R_*(\mathcal{L}), \omega,q)$ (cf.  \cite[Definition~4.4]{camb2023}). 
          For   $\beta\in   \wedge^{2l}  ({\partial}/{\partial  s}\mid  s\in  S )$  and    $\mu\in  \wedge^{2l}  (  d s \mid  s\in  S  )$,   there  are   induced  homomorphisms 
          $
 \beta:   H_*(\mathcal{K},\alpha,q)\longrightarrow    H_*(\mathcal{K},\alpha, q-2l) $ 
  and  
  $ \mu:    H^*(\mathcal{L},\omega,q)\longrightarrow   H^*(\mathcal{L},\omega, q+2l)$
   (cf.  \cite[Theorem~4.2 and Theorem~4.4]{camb2023}).  
   In  Theorem~\ref{th-1daza},  
   we  prove  the  functoriality  of  these  homomorphisms   $\beta$  and  $\mu$  
   as  a  consequence  of 
  Proposition~\ref{pr-bmaq17-co}   and  Theorem~\ref{pr-bmaq17}. 
   In  Theorem~\ref{th-77.3},  
   we  prove  the  functoriality  of  the  constrained  homology  with respect to inclusions of 
   augmented simplicial  complexes
   as  well  as  the  functoriality  of the  constrained  cohomology  
   with  respect to  inclusions  of  augmented  independence  hypergraphs.  
   Some  Mayer-Vietoris  sequences  for  the  constrained  homology of  augmented 
   simplicial  complexes  and  the  constrained  cohomology  of  augmented  
   independence hypergraphs  are  discussed  by  the  end  of  Section~\ref{s---7}.

      Summarizing   Sections~\ref{s---5}-\ref{s---7},   given  an  augmented  hypergraph  
      $\mathcal{H}$  on  $S$,  we  have  augmented  simplicial  complexes  
      $\Delta\mathcal{H}$  and  $\delta\mathcal{H}$  on  $S$,  
      an  augmented  simplicial  complex 
       $\mathcal{H}\mid_{S( \mathcal{H},\partial)}$
         on  $S( \mathcal{H},\partial)$,  
      augmented  independence  hypergraphs  $\bar\Delta\mathcal{H}$  and  $\bar\delta\mathcal{H}$  
      on  $S$  
      and  
      an  augmented  independence  hypergraph 
      $\mathcal{H}\mid _{S(\mathcal{H},d)}$ 
       on  $S(\mathcal{H},d)$.  
      The  constrained  homology  $H_*(-,\alpha,q)$  applies to  
      $\Delta\mathcal{H}$,  $\delta\mathcal{H}$  and  $\mathcal{H}\mid_{S( \mathcal{H},\partial)}$
      while  
      the  constrained  cohomology  $H^*(-,\omega,q)$  applies to  $\bar\Delta\mathcal{H}$,  $\bar\delta\mathcal{H}$  and  $\mathcal{H}\mid_{S( \mathcal{H},\partial)}$.  
      By  (\ref{eq-vmbg1})  and  (\ref{eq-vmbg2}),  
      the  canonical   inclusion  $\varphi: \mathcal{H}\longrightarrow \mathcal{H}'$  of  two augmented  hypergraphs 
           $\mathcal{H}$  and  $\mathcal{H}'$  on  $S$
       induces  canonical  inclusions  of  augmented  simplicial complexes  
       $\Delta\varphi: \Delta\mathcal{H}\longrightarrow \Delta\mathcal{H}'$
       and  $\delta\varphi: \delta\mathcal{H}\longrightarrow \delta\mathcal{H}'$ 
       as  well  as     canonical  inclusions  of  augmented  independence  hypergraphs  
       $\bar\Delta\varphi:  \bar\Delta\mathcal{H}\longrightarrow \bar\Delta\mathcal{H}'$
       and  $\bar\delta\varphi: \bar\delta\mathcal{H}\longrightarrow \bar\delta\mathcal{H}'$.   
       The  functoriality  in  Theorem~\ref{th-1daza}  and   Theorem~\ref{th-77.3} 
       applies  to  these  canonical  inclusions.

      Thirdly,  as  an  application  of  this  paper,   we  consider the   constrained  persistent  homology for  filtrations  of  augmented 
      simplicial  complexes  and  the   constrained  persistent  cohomology  for  
      filtrations  of  augmented  independence  hypergraphs,  in  Section~\ref{s--8}.  
      We  give  the  persistent  version of Theorem~\ref{th-1daza}  in  Corollary~\ref{co-8.1}.  
      As  consequences  of  Section~\ref{s---7},  we  discuss  some  Mayer-Vietoris sequences  for  the   constrained  persistent  homology for  filtrations  of  augmented 
      simplicial  complexes  in  Corollary~\ref{co-8.2} 
      and   for  the   constrained  persistent  cohomology  for  
      filtrations  of  augmented  independence  hypergraphs in  Corollary~\ref{co-8.3}.

        The rest  of  this  paper  is  organized as  follows.  
     In  Section~\ref{s2},  
  we  prove  some  simplicial  identities  and  the  Leibniz  rules  for  the  differential  calculus  
  on  the  free  algebra.  
   In  Section~\ref{s3},  applying    the  differential  calculus,  
  we  construct  some  chain  complexes  and    chain  maps  for  the  free  algebra.
     We  prove  the  functoriality.  
  In  Section~\ref{s4},  
  we  construct    chain  complexes  generated  by     simplicial  acyclic  words
  as   sub-chain  complexes  and  quotient  chain  complexes  of  the  chain  complex  for  the  free  algebra.  
       In  Section~\ref{s---5},  we  study  the  traces  of  augmented hypergraphs  and  their  relation  
       with  the  map  algebra  for  augmented  hypergraphs.   
       In  Section~\ref{s---6},  applying  the  differential  calculus,
       we prove that the  invariant traces  of  augmented   hypergraphs  are 
       augmented  simplicial  complexes  and  augmented   independence  hypergraphs.  
       In  Section~\ref{s---7},  we   investigate   the  homomorphisms  of  constrained  homology  induced  by  inclusions  of  augmented  simplicial  complexes  and  the  homomorphisms  of  constrained  cohomology  induced  by  inclusions  of   augmented  independence hypergraphs.  
       We  prove the  functoriality.  
       In  Section~\ref{s--8},   we  consider  the  persistent  version  of    Section~\ref{s---7}  and     study  the  constrained persistent   (co)homology.

\section{The  free  algebra}\label{s2}

Let  $S$  be  a  set.   
Let  $\langle  S\rangle$  be  the  free  semi-group   generated  by  $S$.  
Then  $\langle  S\rangle$  consists  of  the  elements  of  the  form  $s_0 s_1\cdots  s_n$ 
where  $s_0,s_1,\ldots,s_n\in  S$.  
The  multiplication  of  $\langle   S\rangle $  is  $(s_0 \cdots  s_n) \cdot (t_0\cdots  t_m)= s_0 \cdots  s_nt_0\cdots  t_m$.  
Let  $\langle  S\rangle_n$  be  the  subset  of  $S$  consisting  of  the  elements  $s_0 s_1\cdots  s_n$ 
where  $s_0,s_1,\ldots,s_n\in  S$.  
Then  $\langle  S\rangle = \coprod_{n=0}^\infty  \langle  S\rangle_n$  and  the  multiplication  of  $S$
 sends  the  pair  $( \langle  S\rangle_n,  \langle  S\rangle_m)$  to  $ \langle  S\rangle_{n+m}$.

Let  $R$  be  a  commutative ring  with  unit  $1$  such  that  $2$  has  a  multiplicative  inverse.   
Let  $R\langle S\rangle$  be  the  free  $R$-module  spanned  by  $\langle  S\rangle$   
   and  let  $R\langle S\rangle_n$  be  the  free  $R$-module  spanned  by  $\langle  S\rangle_n$.  
Then  
$R\langle  S\rangle  =\oplus_{n=0}^\infty   R\langle S\rangle_n$.   
 Let  $R\langle  S\rangle_{-1}= R$.  
 Consider  the  augmented  free  $R$-module 
 $\tilde R\langle  S\rangle =\oplus_{n=-1}^\infty   R\langle S\rangle_n$.   
For  any  $m,n\in\mathbb{N}$,  the  multiplication  of  $\langle  S\rangle $  extends  to  a  bilinear  map   from
    $(  R\langle  S\rangle_n,   R\langle  S\rangle_m  )$ to  $  R\langle  S\rangle_{n+m}$.   We  call  $\tilde R\langle  S\rangle$  the  {\it   augmented  free  algebra}  
generated  by  $S$.  
  Let  $s\in  S$.   
We    have    an  $R$-linear map 
\begin{eqnarray}\label{eq-2.za1}
\overrightarrow{\frac{\partial}{\partial  s}}:~~~   R\langle  S\rangle_n\longrightarrow  \prod_{n+1} R\langle  S\rangle_{n-1}
\end{eqnarray}
given  by 
\begin{eqnarray}\label{eq-apr.8}
\overrightarrow {\frac{\partial}{\partial  s}}=\Big(\frac{\partial_0}{\partial s},   \frac{\partial_1}{\partial s},\ldots, \frac{\partial_n}{\partial s}\Big),  
\end{eqnarray}
where   for each  $0\leq i\leq n$,  the $i$-th coordinate of (\ref{eq-apr.8})  is an  $R$-linear map
\begin{eqnarray*}
 \frac{\partial_i}{\partial  s}:~~ R\langle  S\rangle_n\longrightarrow    R\langle  S\rangle_{n-1}  
 \end{eqnarray*}
given by 
\begin{eqnarray}\label{eq-2.4ax}
\frac{\partial_i}{\partial  s}(s_0s_1\ldots  s_n)=
(-1)^i\delta(s,s_i)  s_0\ldots\widehat{s_i} \ldots  s_n.      
\end{eqnarray} 
On  the  other  hand,   we    have           an  $R$-linear map 
\begin{eqnarray}\label{eq-xvca3}
\overrightarrow {{d}s}:~~~  R\langle  S\rangle_n\longrightarrow\prod_{n+2}  R\langle  S\rangle_{n+1}
\end{eqnarray}
given  by 
\begin{eqnarray}\label{eq-apr.9}
\overrightarrow {{d}s}=(d_0 s,  d_1 s,\ldots, d_{n+1} s ),  
\end{eqnarray}  
where    for each  $0\leq i\leq n+1$,  the $i$-th coordinate of (\ref{eq-apr.9})  is an  $R$-linear map
\begin{eqnarray*}
d_i  s:~~ R\langle  S\rangle_n\longrightarrow   R\langle  S\rangle_{n+1}
 \end{eqnarray*}
given by 
\begin{eqnarray}\label{eq-2.2.1}
d_i s(s_0s_1\ldots s_n)=
(-1)^i s_0\ldots s_{i-1} s s_i\ldots   s_n.  
\end{eqnarray} 

\begin{proposition}[Simplicial  identities]
\label{le-0a.1}
For  any  $s,t\in  S$  and  any  possible  $i, j$,    the  following  identities  hold
 \begin{enumerate}[(1) ]
 \item
  $\dfrac{\partial_i}{\partial  s}\circ \dfrac{\partial_j}{\partial t}=-\dfrac{\partial_{j-1}}{\partial  t}\circ\dfrac{\partial_i}{\partial  s}$ for $i<j$;
  \item
$
\dfrac{\partial_i}{\partial  s}\circ d_j  t = \begin{cases}
- d_{j-1} t \circ\dfrac{\partial_i}{\partial  s},  & i<j,\\
\delta(s,t)  {\rm~ id},   & i=j, \\
- d_j  t \circ \dfrac{\partial_{i-1}}{\partial  s},  &  i>j; 
\end{cases}
$
\item
$d_i s \circ d_j t= -d_{j+1}  t \circ  d_i  s$  for $i\leq j$.    
\end{enumerate}
\end{proposition}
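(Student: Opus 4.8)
The plan is to reduce everything to a computation on basis elements and then track index shifts and signs. Since each of the six composite operators appearing in (1)--(3) is $R$-linear, it suffices to verify each identity after evaluating both sides on a single word $s_0 s_1 \cdots s_n$. The mechanism driving all three identities is the same: applying $\partial_\bullet/\partial t$ first deletes a letter and applying $d_\bullet t$ first inserts a letter, and in either case the positions of the remaining letters shift by one; the second operator must then be applied at an index reinterpreted relative to the modified word. This reinterpretation is exactly what forces the replacement $j\mapsto j-1$ after a deletion and $j\mapsto j+1$ after an insertion, and the resulting change in the parity of the exponent of $(-1)$ is precisely what the overall minus sign on the right-hand side is there to absorb. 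So the proof is a matter of careful bookkeeping rather than any new idea.

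First I would dispose of the two ``pure'' identities (1) and (3), which are parallel. For (1) with $i<j$, apply $\partial_j/\partial t$ to $s_0\cdots s_n$ to get, up to sign and the factor $\delta(t,s_j)$, the word with $s_j$ removed; because $i<j$, the $i$-th letter of this shorter word is still $s_i$, so $\partial_i/\partial s$ deletes $s_i$ and produces $(-1)^{i+j}\delta(s,s_i)\delta(t,s_j)$ times $s_0\cdots\widehat{s_i}\cdots\widehat{s_j}\cdots s_n$. On the other side, deleting $s_i$ first ($i<j$) slides $s_j$ into the $(j-1)$-th slot, which is why $\partial_{j-1}/\partial t$ is the correct operator; comparing the accumulated signs $(-1)^{i+j}$ against $(-1)^{i+(j-1)}$ yields the discrepancy cancelled by the explicit $-1$. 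Identity (3) is the dual computation with insertions: inserting $t$ at slot $j$ and then $s$ at slot $i\le j$ lands $s$ to the left of $t$, whereas performing the insertions in the opposite order requires inserting $t$ at slot $j+1$ to place it in the same relative position, and again the parity of $j$ versus $j+1$ accounts for the sign.

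The hard part will be identity (2), since it mixes a deletion with an insertion and splits into three regimes according to how the deletion slot $i$ sits relative to the insertion slot $j$. My plan is to apply $d_j t$ first, producing the word $v=s_0\cdots s_{j-1}\,t\,s_j\cdots s_n$ with $t$ occupying slot $j$, and then read off which letter sits in slot $i$ of $v$: for $i<j$ it is $s_i$, so the deletion effectively commutes past the insertion and, after reindexing, gives the $-d_{j-1}t\circ(\partial_i/\partial s)$ case; for $i>j$ it is $s_{i-1}$, giving the $-d_j t\circ(\partial_{i-1}/\partial s)$ case; and for $i=j$ it is exactly the freshly inserted $t$. In this last case $\partial_j/\partial s$ deletes $t$ precisely when $\delta(s,t)=1$, cancelling the insertion and restoring $s_0\cdots s_n$, while the two signs $(-1)^j$ multiply to $+1$, leaving $\delta(s,t)\,\mathrm{id}$. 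The only delicate points are the boundary sub-case $i-1=j$ in the third regime, where one must confirm that the first $j$ letters of the deleted word are untouched so that $d_j t$ still inserts cleanly into slot $j$, and keeping each $\delta$-factor attached to the correct letter after reindexing; once these are checked, the signs match termwise and the three cases close.
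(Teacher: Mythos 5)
Your proposal is correct and follows essentially the same route as the paper's proof: reduce to basis words by $R$-linearity, apply the right-hand operator first, track how deletion/insertion shifts the remaining indices, and compare signs case by case (including the same three-way split $i<j$, $i=j$, $i>j$ for identity (2)). No gaps; the bookkeeping you describe is exactly what the paper carries out.
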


\begin{proof}
(1)  
Let   $0\leq j\leq n$  and  any $0\leq i\leq n-1$.   Then    
\begin{eqnarray*}
&&\frac{\partial_i}{\partial  s}\circ \frac{\partial_j}{\partial  t}(s_0s_1\ldots  s_n)\\
&=&\frac{\partial_i}{\partial s}\Big((-1)^j\delta(t,s_j)  s_0\ldots\widehat{s_j} \ldots  s_n  
\Big)\\
 &=&(-1)^j\delta(t,s_j) \frac{\partial_i}{\partial s} (s_0\ldots\widehat{s_j} \ldots  s_n)\\
 &=&  
 \begin{cases}
 (-1)^{i+j}\delta(t,s_j)\delta(s,s_i)  s_0\ldots\widehat{s_i}\ldots \widehat{s_j} \ldots s_n,  & 0\leq i\leq j-1;\\
 (-1)^{i+j}\delta(t,s_j)\delta(s,s_{i+1})  s_0\ldots\widehat{s_j}\ldots \widehat{s_{i+1}} \ldots s_n, &j\leq i\leq n-1.
  \end{cases}
 \end{eqnarray*}
Exchanging $s$ and $t$, 
 \begin{eqnarray*}
&&\frac{\partial_i}{\partial t}\circ \frac{\partial_j}{\partial s}(s_0s_1\ldots  s_n)
\\ &=&  
 \begin{cases}
 (-1)^{i+j}\delta(s,s_j)\delta(t,s_i)  s_0\ldots\widehat{s_i}\ldots \widehat{s_j} \ldots  s_n,  & 0\leq i\leq j-1;\\
 (-1)^{i+j}\delta(s,s_j)\delta(t,s_{i+1})  s_0\ldots\widehat{s_j}\ldots \widehat{s_{i+1}} \ldots  s_n, &j\leq i\leq n-1. 
  \end{cases}
 \end{eqnarray*}
Without  loss  of  generality,  suppose $i<j$.  Then   
\begin{eqnarray*}
\frac{\partial_i}{\partial  s}\circ \frac{\partial_j}{\partial  t}(s_0s_1\ldots  s_n)=-\frac{\partial_{j-1}}{\partial  t}\circ\frac{\partial_i}{\partial s}(s_0s_1\ldots  s_n). 
\end{eqnarray*}
By the $R$-linearity  of $ ({\partial_i}/{\partial s})\circ ({\partial_j}/{\partial t})$  and  $ ({\partial_{j-1}}/{\partial t})\circ ({\partial_i}/{\partial s})$,   we  obtain  (1).

(2) Let   $0\leq  i\leq  n$  and    $0\leq  j\leq  n+1$.

 {\sc Case~1}.  $i<j$.  Then 
\begin{eqnarray*}
 \frac{\partial_i}{\partial  s}\circ d_j  t(s_0 s_1\ldots s_n) 
&=& (-1)^{i+j} \delta( s,s_i) s_0\ldots \widehat{s_i} \ldots s_{j-1} t  s_j \ldots s_n\\
&=& -(-1)^{i+(j-1)} \delta( s,s_i) s_0\ldots \widehat{s_i} \ldots s_{j-1} t s_j \ldots s_n\\
&=&- d_{j-1} t \circ\dfrac{\partial_i}{\partial s}(s_0s_1\ldots  s_n).  
\end{eqnarray*}
 By the  $R$-linearity  of $({\partial_i}/{\partial  s})\circ d_j  t $  and $d_{j-1} t \circ( {\partial_i}/{\partial s})$,   
\begin{eqnarray*}
\frac{\partial_i}{\partial s}\circ d_j t=- d_{j-1} t \circ\dfrac{\partial_i}{\partial s}.  
\end{eqnarray*}

{\sc Case~2}.  $i=j$.  Then 
\begin{eqnarray*}
 \frac{\partial_i}{\partial  s}\circ d_i  t(s_0s_1\ldots  s_n) 
&=& (-1)^{i+i} \delta( s,t) s_0\ldots  s_{i-1} \widehat{t} s_i \ldots\ldots s_n\\
&=& \delta( s,t) s_0s_1\ldots s_n.  
\end{eqnarray*}
By the $R$-linearity of $({\partial_i}/{\partial s})\circ d_j  t$,  
\begin{eqnarray*}
\frac{\partial_i}{\partial s}\circ d_j t=\delta(s,t)  {\rm~ id}.  
\end{eqnarray*}

{\sc Case~3}.  $i>j$.  Then 
\begin{eqnarray*}
 \frac{\partial_i}{\partial s}\circ d_j t(s_0s_1\ldots  s_n) 
&=& (-1)^{i+j} \delta( s,s_i) s_0\ldots  s_{j-1} t s_j \ldots\widehat{s_{i-1}} \ldots s_n\\
&=& -(-1)^{(i-1)+j} \delta( s,s_i) s_0\ldots s_{j-1} t s_j \ldots \widehat{s_{i-1}} \ldots s_n\\
&=&- d_{j} t \circ\dfrac{\partial_{i-1}}{\partial s}(s_0s_1\ldots s_n).  
\end{eqnarray*}
By the  $R$-linearity  of $({\partial_i}/{\partial s})\circ d_j t$ and  $d_{j} t \circ({\partial_{i-1}}/{\partial s})$,    
\begin{eqnarray*}
\frac{\partial_i}{\partial s}\circ d_j t=- d_{j} t \circ\dfrac{\partial_{i-1}}{\partial s}.  
\end{eqnarray*}

Summarizing all the three cases,  we obtain (2).

(3)  Let   $0\leq i\leq  j\leq n$.   Then    
\begin{eqnarray*}
&&d_i s \circ d_j t (s_0s_1\ldots  s_n)\\
&=& (-1)^j d_i s(s_0\ldots  s_{j-1} t  s_j\ldots  s_n)\\
&=&\begin{cases}
(-1)^{i+j}s_0\ldots s_{i-1} s s_i\ldots  s_{j-1} t s_j\ldots s_n,  &i<j;\\
 s_0\ldots s_{i-1} s   t s_i\ldots s_n,&i=j;\\
  -s_0\ldots s_{i-1} t s s_i\ldots s_n,&i=j+1;\\
(-1)^{i+j}s_0\ldots s_{j-1} t s_j\ldots  s_{i-2} s s_{i-1}\ldots s_n, & i>j+1.
\end{cases}
\end{eqnarray*}
Since $i\leq j$, 
 \begin{eqnarray*}
d_i s \circ d_j t (s_0s_1\ldots  s_n)=-d_{j+1}  t \circ  d_i s (s_0s_1\ldots  s_n).   
 \end{eqnarray*}
 By the $R$-linearity  of $d_i s \circ d_j t$  and $d_{j+1}  t \circ  d_i  s$,  we obtain  (3).  
\end{proof}

Let  ${\partial}/{\partial  s}  = \sum_{i=0}^n   {\partial_i}/{\partial  s}$  and    $ds  = \sum_{i=0}^{n+1}  d_i s$.    

\begin{lemma}
\label{le-2.5.x}
For any $s,t\in  S$,    
\begin{eqnarray*}
\label{eq-2.5.7}
\frac{\partial}{\partial  s}\circ\frac{\partial}{\partial t} &=&-\frac{\partial}{\partial t}\circ \frac{\partial}{\partial s}, \\
 d s \circ  dt&=&-dt\circ ds.  
\end{eqnarray*}
\end{lemma}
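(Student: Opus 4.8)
The plan is to prove both anticommutativity relations by expanding each total operator into its coordinate components and reducing everything to the componentwise simplicial identities of Proposition~\ref{le-0a.1}. Since all the operators involved are $R$-linear, it suffices to verify each relation on a fixed homogeneous summand $R\langle S\rangle_n$. On this summand $\partial/\partial t=\sum_{j=0}^{n}\partial_j/\partial t$ lands in $R\langle S\rangle_{n-1}$, on which $\partial/\partial s=\sum_{i=0}^{n-1}\partial_i/\partial s$, so that
\[
\frac{\partial}{\partial s}\circ\frac{\partial}{\partial t}=\sum_{i=0}^{n-1}\sum_{j=0}^{n}\frac{\partial_i}{\partial s}\circ\frac{\partial_j}{\partial t},
\]
and symmetrically for $\frac{\partial}{\partial t}\circ\frac{\partial}{\partial s}$. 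The same bookkeeping applies to $ds\circ dt$, with the index ranges shifted upward.

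For the first identity I would split the double sum into the range $i<j$ and the range $i\ge j$. On the range $i<j$, Proposition~\ref{le-0a.1}(1) rewrites $\frac{\partial_i}{\partial s}\circ\frac{\partial_j}{\partial t}$ as $-\frac{\partial_{j-1}}{\partial t}\circ\frac{\partial_i}{\partial s}$; after the substitution $k=j-1$, $l=i$ the constraint $i<j$ becomes $l\le k$, so this half equals $-\sum_{0\le l\le k\le n-1}\frac{\partial_k}{\partial t}\circ\frac{\partial_l}{\partial s}$, which is exactly minus the $l\le k$ part of $\frac{\partial}{\partial t}\circ\frac{\partial}{\partial s}$. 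Dually, applying Proposition~\ref{le-0a.1}(1) to the $k<l$ terms of $\frac{\partial}{\partial t}\circ\frac{\partial}{\partial s}$ and re-indexing by $i=l-1$, $j=k$ identifies the remaining $i\ge j$ part of $\frac{\partial}{\partial s}\circ\frac{\partial}{\partial t}$ with minus the $l>k$ part of $\frac{\partial}{\partial t}\circ\frac{\partial}{\partial s}$. Adding the two halves yields $\frac{\partial}{\partial s}\circ\frac{\partial}{\partial t}=-\frac{\partial}{\partial t}\circ\frac{\partial}{\partial s}$.

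The second identity is handled in exactly the same spirit, now invoking Proposition~\ref{le-0a.1}(3): I would write $ds\circ dt=\sum_{i,j}d_i s\circ d_j t$, split at $i\le j$ versus $i>j$, apply $d_i s\circ d_j t=-d_{j+1}t\circ d_i s$ on the range $i\le j$, re-index by $k=j+1$, $l=i$ (turning $i\le j$ into $l<k$), and match the outcome against the two complementary ranges of $dt\circ ds$ after a symmetric use of the identity on the $k\le l$ terms. Summing the pieces gives $ds\circ dt=-dt\circ ds$.

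I expect the only genuine obstacle to be the index bookkeeping: one must check that after each substitution the transformed summation range covers precisely the complementary range in the other composite, with no off-by-one overlap or omission at the extreme indices (here the degree-dependence of the ranges, with the second-applied operator $\partial/\partial s$ summed over $0\le i\le n-1$ and the first-applied $\partial/\partial t$ over $0\le j\le n$, is what must be tracked). As a consistency check, setting $t=s$ specializes both relations to $2(\partial/\partial s)^2=0$ and $2(ds)^2=0$, whence $(\partial/\partial s)^2=0$ and $(ds)^2=0$ because $2$ is invertible in $R$; this agrees with the simplicial identities and confirms that the sign conventions are correct.
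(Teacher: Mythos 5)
Your proof is correct: the expansion of $\partial/\partial s\circ\partial/\partial t$ and $ds\circ dt$ into coordinate components, the split at $i<j$ versus $i\ge j$ (resp.\ $i\le j$ versus $i>j$), and the re-indexings $k=j-1,\ l=i$ and $k=j+1,\ l=i$ do match the complementary ranges exactly, so the two halves pair off with the claimed signs. The paper itself gives no argument here, deferring to the analogous lemmas in \cite{camb2023}; your derivation from the componentwise simplicial identities of Proposition~\ref{le-0a.1} is precisely the standard argument that citation stands in for, so there is nothing to correct.
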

\begin{proof}
 The  proof  is  an  analogue  of  \cite[Lemma~3.1  and  Lemma~3.3]{camb2023}.  
\end{proof}

Consider   the  exterior  algebras   over  $R$   
\begin{eqnarray*}
\wedge \Big(\frac{\partial}{\partial  s}\mid  s\in  S \Big)&=&  \bigoplus_{n\geq  0}   \wedge^n \Big(\frac{\partial}{\partial  s}\mid  s\in  S \Big),\\     
 \wedge ( ds \mid  s\in  S ) &=& \bigoplus_{n\geq  0}   \wedge^n ( ds \mid  s\in  S ).     
 \end{eqnarray*}

\begin{proposition}[Leibniz rules]\label{pr-mzmza}
Let  $\alpha \in   \wedge^1  ( {\partial}/{\partial  s}\mid  s\in  S  )$  and  
$\omega\in   \wedge^1 ( ds \mid  s\in  S ) $.   
Then  for  any  $\xi\in    R\langle  S\rangle_n$  and  any  $\eta\in    R\langle  S\rangle_m$,  
\begin{eqnarray*}
\alpha (\xi  \eta)&=&  \alpha(\xi)  \eta +  (-1)^{n+1}\xi  \alpha(\eta),\\
 \omega (\xi \eta)&=&  \omega(\xi) \eta + (-1)^{n+1} \xi  \omega(\eta). 
\end{eqnarray*}
\end{proposition}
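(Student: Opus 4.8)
The plan is to reduce both identities to a single generator acting on a pair of basis words and then to expand each side by bookkeeping the positions of the concatenation. Since each side is $R$-linear in the operator and $R$-bilinear in $(\xi,\eta)$, it suffices to treat $\alpha=\partial/\partial s$ and $\omega=ds$ for a fixed $s\in S$, together with basis words $\xi=s_0s_1\ldots s_n$ and $\eta=t_0t_1\ldots t_m$; the general statement then follows by extending $R$-linearly.

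For the first identity I would expand $\frac{\partial}{\partial s}(\xi\eta)$ as the signed sum, weighted by $\delta(s,-)$, over the $n+m+2$ letters of the concatenation $s_0\ldots s_nt_0\ldots t_m$. This sum splits into two blocks: the letters inherited from $\xi$ (positions $0$ through $n$) and those inherited from $\eta$ (positions $n+1$ through $n+m+1$). The first block is literally $(\frac{\partial}{\partial s}\xi)\eta$, while in the second block every position index exceeds its index inside $\eta$ by exactly $n+1$, so factoring out the global sign $(-1)^{n+1}$ reassembles it as $(-1)^{n+1}\xi(\frac{\partial}{\partial s}\eta)$. Because the letters of $\xi$ and of $\eta$ occupy disjoint positions, the two blocks never interact, and the identity falls out directly. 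This is the routine half of the proof.

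For the second identity I would run the same positional argument on the insertion operator, expanding $ds(\xi\eta)$ as the signed sum over the $n+m+3$ slots into which $s$ may be inserted in $s_0\ldots s_nt_0\ldots t_m$. The slots strictly inside $\xi$, together with the slot just after $s_n$, reassemble as $(ds\,\xi)\eta$, while the slots strictly inside $\eta$, together with the slot just before $t_0$, reassemble after the sign shift as $(-1)^{n+1}\xi(ds\,\eta)$. The main obstacle is precisely the slot at the $\xi$–$\eta$ junction, the single place where a slot reads simultaneously as ``after the last letter of $\xi$'' and ``before the first letter of $\eta$''. Here one must verify that the sign $(-1)^{n+1}$ contributed by the $n+1$ letters of $\xi$ is consistent across the two readings, so that this boundary slot is counted correctly rather than doubly; this is the step in which the Leibniz sign is genuinely pinned down, and where the precise conventions fixing $\partial/\partial s$ and $ds$ must be used.

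As an alternative route for the second identity, I would deduce it from the first through the adjunction between $\partial/\partial s$ and $ds$ recorded in \cite[Lemma~3.2]{camb2023}, transposing the already-established Leibniz rule for $\partial/\partial s$ across the pairing; the simplicial identities of Proposition~\ref{le-0a.1} and the anticommutativity of Lemma~\ref{le-2.5.x} supply the sign bookkeeping needed to keep the transpose coherent. Either route isolates the same delicate point, namely the behaviour at the junction slot, so I would organize the write-up around a careful treatment of that boundary term and dispatch the interior slots by the mechanical reindexing described above.
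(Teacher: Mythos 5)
Your overall strategy coincides with the paper's: reduce by $R$-linearity to $\alpha=\partial/\partial s$, $\omega=ds$ and to monomials $\xi=s_0\ldots s_n$, $\eta=t_0\ldots t_m$, then do positional bookkeeping on the concatenation. For the first identity this is complete and correct: the $n+m+2$ letters of $\xi\eta$ split disjointly into the $n+1$ letters from $\xi$ and the $m+1$ letters from $\eta$, and the index shift by $n+1$ produces the sign. The gap is in the second identity, exactly at the junction slot you single out. You defer the check that this slot ``is counted correctly rather than doubly,'' but carrying it out shows that it is counted doubly: the slot between $s_n$ and $t_0$ contributes to $(ds\,\xi)\,\eta$ through $d_{n+1}s(\xi)\cdot\eta=(-1)^{n+1}s_0\ldots s_n\,s\,t_0\ldots t_m$ and to $(-1)^{n+1}\xi\,(ds\,\eta)$ through $(-1)^{n+1}\xi\cdot d_0s(\eta)=(-1)^{n+1}s_0\ldots s_n\,s\,t_0\ldots t_m$ --- the same word with the same sign --- whereas $ds(\xi\eta)=\sum_{i=0}^{n+m+2}d_is(\xi\eta)$ contains this insertion only once. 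The count already betrays this: the left-hand expansion has $n+m+3$ terms while your two blocks have $(n+2)+(m+2)=n+m+4$. Concretely, for $n=m=0$ and $s\notin\{s_0,t_0\}$ one has $ds(s_0t_0)=ss_0t_0-s_0st_0+s_0t_0s$ but $ds(s_0)\,t_0-s_0\,ds(t_0)=ss_0t_0-2\,s_0st_0+s_0t_0s$, so the two sides differ by $(-1)^{n+1}$ times the junction word $s_0\ldots s_n\,s\,t_0\ldots t_m$. Since both of your blocks claim the same slot with the same sign, no sign bookkeeping can make it come out ``counted once,'' and the step you postpone cannot be completed.

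For comparison: the paper's own proof performs the identical split $\sum_{i=0}^{n+1}+\sum_{i=0}^{m+1}$ and passes over this overlap silently, so you have put your finger on a real defect in the argument rather than missed an idea that the paper supplies; the identity for $\omega$ holds only after adding the correction term $(-1)^{n}\,s_0\ldots s_n\,s\,t_0\ldots t_m$ to the right-hand side (or in a setting where the junction insertion vanishes). Your proposed alternative route via the adjunction of Lemma~\ref{le-m018a} does not evade the problem: left and right multiplication by a fixed word are not adjoint to operators that transpose the (correct) Leibniz rule for $\partial/\partial s$ into the claimed one for $ds$, and the same boundary term reappears. As written, your proposal proves the first identity but not the second.
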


\begin{proof}
By  the  $R$-linearity,  it  suffices  to  verify  the  identities  for  $\alpha= {\partial}/{\partial  s}$,   $\omega= ds$,  
$\xi = s_0 \ldots  s_n$    and  $\eta= t_0\ldots  t_m $.   In  fact,  
\begin{eqnarray*}
  \frac{\partial}{\partial  s}( s_0 \ldots  s_n  t_0\ldots  t_m) 
&=&  \sum_{i=0}^{n+m+1 } \frac{\partial_i}{\partial  s}  ( s_0 \ldots  s_n  t_0\ldots  t_m)\\
&=&\sum_{i=0}^{n}   (-1)^i  \delta(s,s_i) s_0\ldots \widehat{s_i}  \ldots s_n   t_0\ldots  t_m\\ 
&& + \sum_{i=0}^{m}   (-1)^{n+i+1}  \delta(s,t_i) s_0\ldots s_n   t_0\ldots\widehat{t_i}\ldots  t_m \\
&=& \frac{\partial}{\partial  s}  ( s_0 \ldots  s_n)   t_0\ldots  t_m +  (-1)^{n+1}  s_0 \ldots  s_n \frac{\partial}{\partial  s}( t_0\ldots  t_m ) 
\end{eqnarray*}  
and 
\begin{eqnarray*}
  ds ( s_0 \ldots  s_n  t_0\ldots  t_m) 
&=&\sum_{i=0}^{n+m+2}  d_is  ( s_0 \ldots  s_n  t_0\ldots  t_m) \\
&=&\sum_{i=0}^{n+1}     (-1)^i s_0\ldots s_{i-1} s s_i\ldots   s_n    t_0\ldots  t_m \\
&& + \sum_{i=0}^{m+1}   (-1)^{n+i+1}   s_0\ldots s_n   t_0\ldots  t_{i-1}  s {t_i}\ldots  t_m \\
&=& ds   ( s_0 \ldots  s_n)   t_0\ldots  t_m +  (-1)^{n+1}  s_0 \ldots  s_n  ds  ( t_0\ldots  t_m ).   
\end{eqnarray*}  
The  identities  are  obtained.  
\end{proof}

\section{Chain  complexes     for  the  free  algebra   }\label{s3}

 Given  two  chain  complexes  $(X,\partial_X)$ and  $(Y,\partial_Y)$   where  $X, Y$ are  graded  $R$-modules  and  $\partial_X,  \partial_Y$  are  the  boundary  maps,  we  use  $C((X,\partial_X),(Y,\partial_Y))$  to  denote the  collection  of  
 all the   chain  maps  from  $(X,\partial_X)$  to  $(Y,\partial_Y)$.  
 It  is  direct  that  $R$-linear  combinations  of  finitely  many  chain  maps  
   from  $(X,\partial_X)$  to  $(Y,\partial_Y)$  is  still  a  chain  map    from  $(X,\partial_X)$  to  $(Y,\partial_Y)$.   
   Thus  
  $C((X,\partial_X),(Y,\partial_Y))$   is  an  $R$-module.  
 
 
 Let  $k\in  \mathbb{N}$.  Let  $\alpha\in   \wedge^{2k+1} ( {\partial}/{\partial  s}\mid  s\in  S  )$  and    $\omega\in  \wedge^{2k+1}  (  d s \mid  s\in  S  )$.  
   Since $2$  has  an multiplicative  inverse in  $R$,       $ \alpha\wedge\alpha=\omega\wedge\omega=0$.   Let  $q\in \mathbb{N}$.  We  obtain  chain  complexes 
   \begin{eqnarray*}
  &&  \cdots   \overset{\alpha_{p+2}}{\longrightarrow}    R\langle   S\rangle_{(p+1)(2k+1)+q}  \overset{\alpha_{p+1}}{\longrightarrow}   R\langle   S\rangle_{p( 2k+1)+q}\overset{\alpha_p}{\longrightarrow}  R\langle   S\rangle_{(p-1)(2k+1)+q}\overset{\alpha_{p-1}}{\longrightarrow}   \cdots,\\
 &&  \cdots   \overset{\omega_{p-2}}{\longrightarrow}    R\langle   S\rangle_{(p-1)(2k+1)+q}  \overset{\omega_{p-1}}{\longrightarrow}  R\langle   S\rangle_{p ( 2k+1)+q}\overset{\omega_p}{\longrightarrow}  R\langle   S\rangle_{(p+1)(2k+1)+q}\overset{\omega_{p+1}}{\longrightarrow}   \cdots
   \end{eqnarray*}
 denoted  as   $(\tilde R\langle  S\rangle_*, \alpha,q)$  and    $(\tilde R\langle  S\rangle_*, \omega,q)$    respectively.  
  Let   $l\in\mathbb{N}$.     Let  $\beta\in   \wedge^{2l}  ({\partial}/{\partial  s}\mid  s\in  S )$  and    $\mu\in  \wedge^{2l}  (  d s \mid  s\in  S  )$.   
 Since  $\beta\wedge\alpha=\alpha\wedge\beta$  and  $\mu\wedge\omega=\omega\wedge \mu$,  
  we  obtain   chain  maps 
  \begin{eqnarray} \label{eq-cm1}
 \beta:  &&(\tilde R\langle  S\rangle_*, \alpha,q)\longrightarrow    (\tilde R\langle  S\rangle_*, \alpha,q-2l),\\
   \mu:  &&(\tilde R\langle  S\rangle_*, \omega,q)\longrightarrow    (\tilde R\langle  S\rangle_*, \omega,q+2l).  
   \label{eq-cm2}
   \end{eqnarray}
     Consider   the  polynomial   algebras 
  \begin{eqnarray*}
\wedge^{2*} \Big( \frac{\partial}{\partial  s}\mid  s\in  S \Big)&=&  \bigoplus_{l\geq  0}   \wedge^{2l} \Big(\frac{\partial}{\partial  s}\mid  s\in  S \Big),\\     
\wedge^{2*}  ( ds \mid  s\in  S ) &=& \bigoplus_{l\geq  0}   \wedge^{2l} ( ds \mid  s\in  S ).     
 \end{eqnarray*}
 On the  other  hand,    consider  the  $R$-modules 
 \begin{eqnarray*}
   C((\tilde R\langle  S\rangle_*, \alpha,q),   (\tilde R\langle  S\rangle_*, \alpha,q-2l)),~~~
   C((\tilde R\langle  S\rangle_*, \omega,q),   (\tilde R\langle  S\rangle_*, \omega,q+2l)).   
   \end{eqnarray*} 
 Take  the  direct  sums   
 \begin{eqnarray*}
  C_-(\tilde R\langle  S\rangle_*, \alpha,q)&=&\bigoplus_{l=0}^\infty   C((\tilde R\langle  S\rangle_*, \alpha,q),   (\tilde R\langle  S\rangle_*, \alpha,q-2l)),\\
  C_+(\tilde R\langle  S\rangle_*, \omega,q)&=&\bigoplus_{l=0}^\infty   C((\tilde R\langle  S\rangle_*, \omega,q),   (\tilde R\langle  S\rangle_*, \omega,q+2l)).
   \end{eqnarray*}
  For  any  $l_1,l_2\in \mathbb{N}$  and    any  
  \begin{eqnarray*}
 & \varphi_1\in  C((\tilde R\langle  S\rangle_*, \alpha,q-2l_2),   (\tilde R\langle  S\rangle_*, \alpha,q-2({l_1}+l_2))),\\
&  \varphi_2\in  C((\tilde R\langle  S\rangle_*, \alpha,q),   (\tilde R\langle  S\rangle_*, \alpha,q-2l_2)), 
  \end{eqnarray*}   
    define  the  multiplication  of  $\varphi_1$  and  $\varphi_2$  as  the    composition   
  \begin{eqnarray}\label{eq-mmvdaq1}
  \varphi_1\circ\varphi_2  \in   C((\tilde R\langle  S\rangle_*, \alpha,q),   (\tilde R\langle  S\rangle_*, \alpha,q-2{(l_1+l_2)}))    
  \end{eqnarray}
  given  by  
  \begin{eqnarray*}
 (\tilde R\langle  S\rangle_*, \alpha,q) \overset{\varphi_2}{\longrightarrow }   (\tilde R\langle  S\rangle_*, \alpha,q-2l_2) \overset{\varphi_1}{\longrightarrow }   (\tilde R\langle  S\rangle_*, \alpha,q-2(l_1+l_2)).   
  \end{eqnarray*}
  Similarly,  for    any   
  \begin{eqnarray*}
  &\psi_1\in  C((\tilde R\langle  S\rangle_*, \omega,q+2l_2),   (\tilde R\langle  S\rangle_*, \omega,q+2(l_1+{l_2}))),\\
  &\psi_2\in  C((\tilde R\langle  S\rangle_*, \omega,q),   (\tilde R\langle  S\rangle_*, \omega,q+2{l_2})), 
  \end{eqnarray*}  
  define   the   multiplication  of   $\psi_1$  and  $\psi_2$  as  the    composition  
    \begin{eqnarray}\label{eq-mmvdaq2}
  \psi_1\circ\psi_2  \in   C((\tilde R\langle  S\rangle_*, \omega,q),   (\tilde R\langle  S\rangle_*, \omega,q+2{(l_1+l_2)}))      
  \end{eqnarray}
  given  by 
  \begin{eqnarray*}
 (\tilde R\langle  S\rangle_*, \omega,q) \overset{\psi_2}{\longrightarrow } (\tilde R\langle  S\rangle_*, \omega,q+2l_2) \overset{\psi_1}{\longrightarrow } (\tilde R\langle  S\rangle_*, \omega,q+2(l_1+l_2)).  
  \end{eqnarray*}

 \begin{theorem}\label{pr-bmaq17}
(\ref{eq-cm1})  gives  a   homomorphism  from  $\wedge^{2*}   ({\partial}/{\partial  s}\mid  s\in  S  )$   to   $ C_-(\tilde R\langle  S\rangle_*, \alpha,q)$  and  (\ref{eq-cm2})  gives  a  homomorphism  from  $\wedge^{2*}   ( d s  \mid  s\in  S  )$   to   $ C_+(\tilde R\langle  S\rangle_*, \omega,q)$. 
 \end{theorem}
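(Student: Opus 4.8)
The plan is to exhibit the assignment (\ref{eq-cm1}) as the even-degree part of a single representation of the exterior algebra by $R$-linear operators on $\tilde R\langle S\rangle$, under which the wedge product corresponds to composition of operators; the homomorphism property then follows from the associativity of this representation together with the grading bookkeeping already built into the multiplication (\ref{eq-mmvdaq1}).

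First I would construct the representation. Let $V$ be the $R$-submodule of the endomorphism ring of $\tilde R\langle S\rangle$ spanned by the operators $\partial/\partial s$, $s\in S$. By Lemma~\ref{le-2.5.x} these operators anticommute, and setting $s=t$ there gives $2(\partial/\partial s)^2=0$, hence $(\partial/\partial s)^2=0$ since $2$ is invertible in $R$; by bilinearity every $v\in V$ satisfies $v\circ v=0$. By the universal property of the exterior algebra the inclusion of $V$ into the endomorphism ring extends uniquely to an $R$-algebra homomorphism
\[
\rho:\ \wedge\Big(\frac{\partial}{\partial s}\mid s\in S\Big)\longrightarrow \mathrm{End}_R\big(\tilde R\langle S\rangle\big),\qquad \rho(\beta_1\wedge\beta_2)=\rho(\beta_1)\circ\rho(\beta_2),
\]
carrying each wedge monomial to the composition of the corresponding partial-derivative operators. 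Since each $\partial/\partial s$ lowers the word-length grading by $1$, an element $\beta\in\wedge^{2l}(\partial/\partial s\mid s\in S)$ is sent to an operator $\rho(\beta)$ that lowers the grading by exactly $2l$ and, crucially, is defined on all of $\tilde R\langle S\rangle$ independently of the parameter $q$. As recorded before (\ref{eq-cm1}), $\rho(\beta)$ commutes with $\alpha$ because $\beta\wedge\alpha=\alpha\wedge\beta$, so it restricts, for every starting parameter $q'$, to a chain map $(\tilde R\langle S\rangle_*,\alpha,q')\to(\tilde R\langle S\rangle_*,\alpha,q'-2l)$. Thus $\Phi(\beta):=\rho(\beta)$, read with source parameter $q$, is a well-defined element of the $l$-summand of $C_-(\tilde R\langle S\rangle_*,\alpha,q)$.

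It then remains to verify that $\Phi$ is an algebra homomorphism. $R$-linearity is immediate from that of $\rho$. For multiplicativity, take $\beta_i\in\wedge^{2l_i}$ for $i=1,2$; by the definition (\ref{eq-mmvdaq1}) the product $\Phi(\beta_1)\circ\Phi(\beta_2)$ in $C_-$ is the composite chain map
\[
(\tilde R\langle S\rangle_*,\alpha,q)\xrightarrow{\ \rho(\beta_2)\ }(\tilde R\langle S\rangle_*,\alpha,q-2l_2)\xrightarrow{\ \rho(\beta_1)\ }(\tilde R\langle S\rangle_*,\alpha,q-2(l_1+l_2)),
\]
where the second arrow is legitimate precisely because $\rho(\beta_1)$ does not depend on its source parameter. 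As an operator this composite equals $\rho(\beta_1)\circ\rho(\beta_2)=\rho(\beta_1\wedge\beta_2)=\Phi(\beta_1\wedge\beta_2)$, and $\Phi$ sends $1$ to the identity chain map, the unit of $C_-$; hence $\Phi$ is a unital algebra homomorphism. The statement for (\ref{eq-cm2}) is proved identically, replacing $\partial/\partial s$ by $ds$, using the anticommutativity of the operators $ds$ from Lemma~\ref{le-2.5.x}, noting that each $ds$ raises the grading by $1$ so that $\mu\in\wedge^{2l}(ds\mid s\in S)$ raises it by $2l$, and invoking the composition (\ref{eq-mmvdaq2}). The only point that genuinely demands care — and the main obstacle — is the grading bookkeeping: one must check that the $q$-indexed multiplications on $C_-$ and $C_+$ agree with honest composition of operators, which hinges on the $q$-independence of $\rho(\beta)$ established above. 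Once this is secured, the homomorphism property is formal.
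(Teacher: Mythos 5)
Your proposal is correct and follows essentially the same route as the paper's proof: both reduce multiplicativity to the fact that the wedge product of even-degree elements acts on $\tilde R\langle S\rangle$ by composition of the corresponding operators, so that the $q$-indexed product (\ref{eq-mmvdaq1}) is honest operator composition. The only difference is presentational — you make explicit, via the universal property of the exterior algebra and the relations of Lemma~\ref{le-2.5.x}, the algebra homomorphism $\rho$ into $\mathrm{End}_R(\tilde R\langle S\rangle)$ that the paper's commutative-diagram argument leaves implicit.
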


 \begin{proof}
 By  (\ref{eq-cm1}),  we  have  a  homomorphism  of   graded $R$-modules     from $\wedge^{2*}   ({\partial}/{\partial  s}\mid  s\in  S  )$   to   $ C_-(\tilde R\langle  S\rangle_*, \alpha,q)$.     
  For any  $\beta_i\in  \wedge^{2l_i}  ({\partial}/{\partial  s}\mid  s\in  S )$,   $i=1,2$,  
 $\beta_1\wedge \beta_2=\beta_2\wedge\beta_1$  induces  a  chain  map  such  that  the  following  diagram  commutes 
   \begin{eqnarray*}
 \xymatrix{
 (\tilde R\langle  S\rangle_*, \alpha,q)\ar[rr]^-{\beta_1} \ar[dd]_-{\beta_2} \ar[rrdd] ^-{~~~\beta_1\wedge \beta_2=\beta_2\wedge\beta_1}&&      (\tilde R\langle  S\rangle_*, \alpha,q-2l_1)\ar[dd]^-{\beta_2}\\
 \\
(\tilde R\langle  S\rangle_*, \alpha,q-2l_2) \ar[rr]^-{\beta_1} &&     (\tilde R\langle  S\rangle_*, \alpha,q-2(l_1+l_2)).   
 }
 \end{eqnarray*}
Thus   the  homomorphism  of   graded $R$-modules     from $\wedge^{2*}   ({\partial}/{\partial  s}\mid  s\in  S  )$   to   $ C_-(\tilde R\langle  S\rangle_*, \alpha,q)$  preserves  the  multiplication.

Similarly,  by  (\ref{eq-cm2}),  we  have  a  homomorphism  of   graded $R$-modules     from $\wedge^{2*}   (ds\mid  s\in  S  )$   to   $ C_+(\tilde R\langle  S\rangle_*, \omega,q)$.   
  For any  $\mu_i\in  \wedge^{2l_i}  (d  s \mid  s\in  S )$,   $i=1,2$,  
 $\mu_1\wedge \mu_2=\mu_2\wedge\mu_1$  induces  a  chain  map  such  that  the  following  diagram  commutes 
   \begin{eqnarray*}
 \xymatrix{
 (\tilde R\langle  S\rangle_*, \omega,q)\ar[rr]^-{\mu_1} \ar[dd]_-{\mu_2} \ar[rrdd] ^-{~~~\mu_1\wedge \mu_2=\mu_2\wedge\mu_1}&&      (\tilde R\langle  S\rangle_*, \omega,q+2l_1)\ar[dd]^-{\mu_2}\\
 \\
(\tilde R\langle  S\rangle_*, \omega,q+2l_2) \ar[rr]^-{\mu_1} &&     (\tilde R\langle  S\rangle_*, \omega,q+2(l_1+l_2)).   
 }
 \end{eqnarray*}
Thus the   homomorphism  of   graded $R$-modules     from $\wedge^{2*}   (  ds \mid  s\in  S  )$   to   $ C_+(\tilde R\langle  S\rangle_*, \omega,q)$  preserves  the  multiplication.    
  \end{proof}

  Let $R=\mathbb{C}$ be  the complex  numbers.     Then  $\tilde {\mathbb{C}}\langle S\rangle$  
  has  a  canonical  inner  product 
  such  that  
  \begin{eqnarray*}
  \Big\langle \sum   z_{s_0,\ldots,  s_n}  s_0\ldots  s_n,      \sum  {z'}_{ t_0,\ldots,  t_n}  t_0\ldots  t_n \Big\rangle = \sum   z_{s_0,\ldots,  s_n}  \bar  {z'}_{ t_0,\ldots,  t_n}  \prod_{i=0}^n   \delta(s_i,t_i)
  \end{eqnarray*}
 and  $\mathbb{C}\langle  S\rangle_n$  and    $\mathbb{C}\langle  S\rangle_m$  
 are  orthogonal  for  $n\neq  m$.   
 Two   linear  maps  $A$  and  $B$  from  $\tilde{\mathbb{C}}\langle S\rangle$   to  itself  are        adjoint   if
 $\langle  A\xi,\eta\rangle  = \overline{\langle \xi,  B\eta\rangle}$  for  any  $\xi,\eta\in  \tilde{\mathbb{C}}\langle S\rangle$.  
 
 \begin{lemma}\label{le-m018a}
 For  any  $n\in  \mathbb{N}$,  
 \begin{eqnarray*}
\alpha=\sum_{s_1, \ldots, s_n\in S} z_{s_1, \ldots, s_n} \frac{\partial}{\partial s_1} \wedge \cdots \wedge\frac{\partial}{\partial  s_n}
\end{eqnarray*}
and 
\begin{eqnarray*} 
\omega={\rm sgn}(n)\sum_{s_1, \ldots, s_n\in  S} \bar z_{s_1, \ldots,  s_n} d s_1\wedge  \cdots \wedge  d s_n,
\end{eqnarray*}
 where $z_{s_1, \ldots, s_n}\in\mathbb{C}
$,   ${\rm sgn}(n)=1$  if $n\equiv 0,1$  modulo  $4$  and ${\rm sgn}(n)=-1$  if $n\equiv 2,3$  modulo $4$,  are  adjoint.  In  particular,  $\partial/\partial s$  and  $ds$  are  adjoint.   
 \end{lemma}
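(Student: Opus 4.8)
The plan is to reduce the assertion to the orthonormal basis of words, where the inner product takes real values and the two operators have integer matrices, and then to read the adjointness condition as the statement that $\omega$ is the conjugate transpose of $\alpha$. The computation splits into a one-variable base case and the propagation of that base case through the wedge product, where the only delicate point is the bookkeeping of signs.

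First I would settle the base case that $\partial/\partial s$ and $ds$ are adjoint. Since the words form an orthonormal basis and both $\partial_i/\partial s$ and $d_i s$ from (\ref{eq-2.4ax}) and (\ref{eq-2.2.1}) have entries in $\{0,\pm 1\}$, it is enough to compare $\langle \frac{\partial}{\partial s}(s_0\ldots s_n),\, t_0\ldots t_{n-1}\rangle$ with $\langle s_0\ldots s_n,\, ds(t_0\ldots t_{n-1})\rangle$ on basis words, where the conjugation is vacuous. Expanding, the first pairing counts with sign $(-1)^i$ those positions $i$ at which deleting $s_i=s$ from $s_0\ldots s_n$ yields $t_0\ldots t_{n-1}$, whereas the second counts with sign $(-1)^j$ those positions $j$ at which inserting $s$ into $t_0\ldots t_{n-1}$ yields $s_0\ldots s_n$. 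Deletion of an occurrence of $s$ and insertion of $s$ at the same slot are mutually inverse, so $i=j$ is a sign-preserving bijection between the surviving terms and the two pairings agree; this is \cite[Eq.~(3.5) and Lemma~3.2]{camb2023}.

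Next I would read a wedge of operators as the composition of its factors, which anticommute by Lemma~\ref{le-2.5.x}, and invoke two formal properties of the adjoint on a Hermitian space: it reverses the order of a composition and conjugates scalars. Applied to $\alpha=\sum z_{s_1,\ldots,s_n}\frac{\partial}{\partial s_1}\wedge\cdots\wedge\frac{\partial}{\partial s_n}$ together with the base case, this gives that the adjoint of $\alpha$ is $\sum \bar z_{s_1,\ldots,s_n}\, ds_n\wedge\cdots\wedge ds_1$. Restoring the natural order $ds_1\wedge\cdots\wedge ds_n$ is exactly where ${\rm sgn}(n)$ enters: reversing $n$ anticommuting generators costs $(-1)^{n(n-1)/2}$, which equals $+1$ for $n\equiv 0,1$ and $-1$ for $n\equiv 2,3$ modulo $4$. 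Hence the adjoint of $\alpha$ is ${\rm sgn}(n)\sum \bar z_{s_1,\ldots,s_n}\, ds_1\wedge\cdots\wedge ds_n=\omega$, and the case $n=1$ recovers the adjointness of $\partial/\partial s$ and $ds$. The one real obstacle is this twofold sign tracking—combining the order reversal with the anticommutativity of the $ds_i$ while simultaneously checking that conjugating the coefficients turns $z$ into $\bar z$—which is the content of \cite[Proposition~3.1]{camb2023}.
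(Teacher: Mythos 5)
Your proposal is correct and follows essentially the same route as the paper, whose proof is just a pointer to \cite[Eq.~(3.5), Lemma~3.2 and Proposition~3.1]{camb2023}: you verify the one-variable adjointness $\bigl(\partial/\partial s\bigr)^{*}=ds$ on the orthonormal basis of words via the deletion--insertion bijection, then propagate it through the wedge by order reversal and scalar conjugation, with the reversal of $n$ anticommuting generators producing the factor $(-1)^{n(n-1)/2}={\rm sgn}(n)$. This is precisely the argument the cited results encapsulate, so nothing is missing.
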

 
 \begin{proof}
 The  proof  is  an  analogue  of  \cite[ Eq.  (3.5),   Lemma~3.2  and  Proposition~3.1]{camb2023}. 
 \end{proof}
 
 \begin{proposition}\label{pr-mvxza1}
 Let  $n=2k+1$.  
 Let  $\alpha$  and  $\omega$  be  given  in  Lemma~\ref{le-m018a}.  
 Let  $\beta_*$  be  the  Betti numbers  of  the  chain  complex  $(\tilde{\mathbb{C}}\langle  S\rangle_*, \alpha,q)$  and  
 let $\beta^*$  be the  Betti  numbers  of   the  chain  complex   $(\tilde{\mathbb{C}}\langle  S\rangle_*, \omega,q)$.  
  Then   for  any   $p\in \mathbb{N}$,  
$
 \beta_p ( \tilde{\mathbb{C}}\langle  S\rangle_*, \alpha,q)=\beta^p (\tilde{\mathbb{C}}\langle  S\rangle_*, \omega,q)$.   
 In  particular,  $
 \beta_p ( \tilde{\mathbb{C}}\langle  S\rangle_*, \partial/\partial s,q)=\beta^p (\tilde{\mathbb{C}}\langle  S\rangle_*,  ds,q)$.   
 \end{proposition}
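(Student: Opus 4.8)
The plan is to run the standard finite–dimensional Hodge argument, using the adjointness of $\alpha$ and $\omega$ from Lemma~\ref{le-m018a} to identify both (co)homologies with one common space of harmonic elements. Throughout write $V_p=\mathbb{C}\langle S\rangle_{p(2k+1)+q}$, so that on the \emph{same} underlying spaces the $\alpha$-complex reads $\cdots\to V_{p+1}\xrightarrow{\alpha}V_p\xrightarrow{\alpha}V_{p-1}\to\cdots$ and the $\omega$-complex reads $\cdots\to V_{p-1}\xrightarrow{\omega}V_p\xrightarrow{\omega}V_{p+1}\to\cdots$. Since $\alpha\in\wedge^{2k+1}$ and $\omega\in\wedge^{2k+1}$ have odd exterior degree and $2$ is invertible, $\alpha\wedge\alpha=\omega\wedge\omega=0$ forces $\alpha^2=0$ and $\omega^2=0$; hence both are genuine complexes and $\beta_p=\dim\bigl(\ker(\alpha|_{V_p})/\mathrm{im}(\alpha|_{V_{p+1}})\bigr)$ while $\beta^p=\dim\bigl(\ker(\omega|_{V_p})/\mathrm{im}(\omega|_{V_{p-1}})\bigr)$.

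First I would introduce the Laplacian $\Delta_p=\alpha\omega+\omega\alpha\colon V_p\to V_p$, which is well defined because the degree bookkeeping above sends each summand from $V_p$ back to $V_p$. By Lemma~\ref{le-m018a}, $\omega$ is the adjoint of $\alpha$ for the canonical inner product, so $\Delta_p$ is self-adjoint and for every $\xi\in V_p$ one computes $\langle\Delta_p\xi,\xi\rangle=\|\alpha\xi\|^2+\|\omega\xi\|^2\ge 0$. In particular $\Delta_p\xi=0$ iff $\alpha\xi=0$ and $\omega\xi=0$, so $\ker\Delta_p=\ker(\alpha|_{V_p})\cap\ker(\omega|_{V_p})=:\mathcal{H}_p$, the harmonic elements in degree $p$.

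Next, working degree by degree with each $V_p$ finite-dimensional (which is exactly the setting in which the Betti numbers are finite), I would invoke the orthogonal decomposition. From $\alpha^2=0$, $\omega=\alpha^*$ and the identity $(\mathrm{im}\,\alpha)^\perp=\ker\alpha^\ast=\ker\omega$ one gets the Hodge decomposition $V_p=\mathcal{H}_p\oplus\mathrm{im}(\alpha|_{V_{p+1}})\oplus\mathrm{im}(\omega|_{V_{p-1}})$, together with $\ker(\alpha|_{V_p})=\mathcal{H}_p\oplus\mathrm{im}(\alpha|_{V_{p+1}})$ and $\ker(\omega|_{V_p})=\mathcal{H}_p\oplus\mathrm{im}(\omega|_{V_{p-1}})$. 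The orthogonal projection onto $\mathcal{H}_p$ therefore induces isomorphisms $H_p(\tilde{\mathbb{C}}\langle S\rangle_*,\alpha,q)\cong\mathcal{H}_p\cong H^p(\tilde{\mathbb{C}}\langle S\rangle_*,\omega,q)$, so that $\beta_p=\dim\mathcal{H}_p=\beta^p$. (Equivalently, the $\omega$-complex is the adjoint of the $\alpha$-complex, and for finite-dimensional complexes over a field the homology and the cohomology of the adjoint complex have equal dimension.) The final clause is the special case $\alpha=\partial/\partial s$, $\omega=ds$ (i.e.\ $n=1$, $\mathrm{sgn}(1)=1$), which are adjoint by the last sentence of Lemma~\ref{le-m018a}.

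The main obstacle is finiteness/closedness rather than algebra: the clean Hodge decomposition needs $\mathrm{im}\,\alpha$ and $\mathrm{im}\,\omega$ to be closed, which is automatic once each $V_p$ is finite-dimensional, so I would either take $S$ finite or restrict to the degrees where the relevant modules are finite-dimensional — precisely the range in which $\beta_p$ and $\beta^p$ are defined. Everything else is bookkeeping: checking the degree shifts so that $\Delta_p$ preserves $V_p$, and deriving the orthogonal splittings from $\alpha^2=0$ and $\omega=\alpha^\ast$. Because the inner product and the adjointness used here are exactly those of \cite[Eq.~(3.5), Lemma~3.2, Proposition~3.1]{camb2023}, this is the verbatim analogue of the argument there, which is why the statement is recorded as a by-product.
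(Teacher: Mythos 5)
Your proof is correct, and it reaches the conclusion by a mildly different route than the paper. You build the Laplacian $\Delta_p=\alpha\omega+\omega\alpha$ on $V_p=\mathbb{C}\langle S\rangle_{p(2k+1)+q}$, identify $\ker\Delta_p$ with the harmonic space $\mathcal{H}_p=\ker\alpha\cap\ker\omega$, and use the Hodge decomposition $V_p=\mathcal{H}_p\oplus\mathrm{im}(\alpha|_{V_{p+1}})\oplus\mathrm{im}(\omega|_{V_{p-1}})$ to exhibit a common model $\mathcal{H}_p$ for both $H_p(\tilde{\mathbb{C}}\langle S\rangle_*,\alpha,q)$ and $H^p(\tilde{\mathbb{C}}\langle S\rangle_*,\omega,q)$. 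The paper never introduces the Laplacian: it uses the adjointness of Lemma~\ref{le-m018a} only to get the orthogonal-complement identities $\ker(\alpha_{p+1})=\mathrm{Im}(\omega_p)^{\perp}\cong\mathrm{Coker}(\omega_p)$ and $\ker(\omega_p)=\mathrm{Im}(\alpha_{p+1})^{\perp}\cong\mathrm{Coker}(\alpha_{p+1})$, and then finishes by pure dimension counting with rank--nullity in each degree. The two arguments rest on exactly the same ingredients (adjointness plus orthogonality in a finite-dimensional inner-product space), but yours yields the stronger statement of a canonical isomorphism $H_p\cong\mathcal{H}_p\cong H^p$ rather than a mere equality of Betti numbers, at the cost of verifying the three-term orthogonal splitting; the paper's version is shorter but only produces the numerical identity. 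One point in your favour: you explicitly flag that the argument needs each $V_p$ finite-dimensional (or at least closed images), which the paper glosses over even though it writes $\dim V_p$ and the closure $\overline{\mathrm{Im}(\omega_p)}$ without comment; since $\mathbb{C}\langle S\rangle_n$ is infinite-dimensional for infinite $S$, making that hypothesis explicit is the right thing to do.
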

 
 \begin{proof}
 
 Since 
    $\alpha$  and  $\omega$  are  adjoint,  
 for  the  chain  complexes  $(\tilde{\mathbb{C}}\langle  S\rangle_*, \alpha,q)$  and    $(\tilde{\mathbb{C}}\langle  S\rangle_*, \omega,q)$  we  have 
 \begin{eqnarray*}
 & {\rm  Ker} ( \alpha_{p+1})=    {\rm  Im} ( \omega_{p}) ^\perp\cong {\rm  Coker}(\omega_{p}), \\
   &\overline{ {\rm  Im} ( \omega_{p}) }={\rm  Ker} ( \alpha_{p+1})^\perp\cong  {\rm  Im} ( \alpha_{p+1}), \\
 &{\rm  Ker} ( \omega_{p})=    {\rm  Im} ( \alpha_{p+1}) ^\perp\cong {\rm  Coker}(\alpha_{p+1}), \\
  &\overline{ {\rm  Im} ( \alpha_{p+1}) }={\rm  Ker} ( \omega_{p})^\perp\cong {\rm   Im} ( \omega_{p}).  
 \end{eqnarray*}
 It  follows  that  the  Betti  numbers  of     $(\tilde{\mathbb{C}}\langle  S\rangle_*, \alpha,q)$  and 
 $(\tilde{\mathbb{C}}\langle  S\rangle_*, \omega,q)$  satisfy  
 \begin{eqnarray*}
 \beta_p ( \tilde{\mathbb{C}}\langle  S\rangle_*, \alpha,q)&=& \dim{\rm  Ker} ( \alpha_{p})-\dim  {\rm  Im} ( \alpha_{p+1})\\
 &=& \dim {\rm  Coker} ( \omega_{p-1}) - \dim   {\rm   Im} ( \omega_{p}), \\
\beta^p( \tilde{\mathbb{C}}\langle  S\rangle_*, \omega,q)  &=& \dim  {\rm  Ker} ( \omega_{p})-\dim   {\rm  Im} ( \omega_{p-1})\\
&=&\dim {\rm  Coker}(\alpha_{p+1})  - \dim  {\rm  Im} ( \alpha_{p}). 
 \end{eqnarray*}
 With  the  help  that 
 \begin{eqnarray*}
  \dim {\rm  Ker} (\alpha_p) +  \dim {\rm   Im} (\alpha_p) 
 & =&
  \dim {\rm  Ker} (\omega_p) +  \dim {\rm   Im} (\omega_p)  \\
& =& \dim {\rm   Coker} (\alpha_{p+1}) +  \dim {\rm   Im} (\alpha_{p+1}) \\
&=&\dim {\rm   Coker} (\omega_{p-1}) +  \dim {\rm   Im} (\omega_{p-1})\\
& =&
  \dim \tilde{\mathbb{C}}\langle  S\rangle_{p ( 2k+1)+q},  
 \end{eqnarray*}
 we  obtain 
\begin{eqnarray}\label{eq-0aq109}
 \beta_p (\tilde{\mathbb{C}}\langle  S\rangle_*, \alpha,q)=\beta^p (\tilde{\mathbb{C}}\langle  S\rangle_*, \omega,q).   \end{eqnarray}
 Let  $\alpha=\partial/\partial  s$  and  $\omega= ds$  in  (\ref{eq-0aq109}).    
  We  obtain  $
 \beta_p ( \tilde{\mathbb{C}}\langle  S\rangle_*, \partial/\partial s,q)=\beta^p (\tilde{\mathbb{C}}\langle  S\rangle_*,  ds,q)$.   
 \end{proof}

 \section{Sub-chain  complexes   and  quotient chain  complexes  for  the  free  algebra}\label{s4}


 For  any  $s_0 s_1\ldots s_n\in  \langle  S\rangle_n $,  we  call it       {\it   cyclic}  if    there exist   $0\leq i<j\leq n$  such that   $s_j= s_i$  and  call it   {\it  acyclic}  if  it  is  not  cyclic.    
 Let  $\mathcal{C}\langle  S\rangle_n$  be   the free  $R$-module   spanned by all the  cyclic    elements   in $ \langle  S\rangle_n $. 
 Let  $\mathcal{C}\langle  S\rangle =\oplus_{n=0}^\infty    \mathcal{C}\langle  S\rangle_n$. 
 Let  $\mathcal{D}\langle  S\rangle_n$  be   the free  $R$-module   spanned by all the  acyclic    elements   in $ \langle  S\rangle_n $. 
 Let  $\mathcal{D}\langle  S\rangle =\oplus_{n=0}^\infty    \mathcal{D}\langle  S\rangle_n$. 
 Then 
 \begin{eqnarray}\label{eq-00mca1}
 R\langle  S\rangle=\mathcal{C}\langle  S\rangle \oplus \mathcal{D}\langle  S\rangle
 .  
 \end{eqnarray}
The   bilinear  map  from  $(R\langle  S\rangle_n,  R\langle  S\rangle_m)$  to   $ R\langle  S\rangle_{n+m}$  
 induced  by  the   multiplication  of  $\langle  S\rangle $  sends  
 $(\mathcal{C}\langle  S\rangle _n,  R\langle  S\rangle_m) $   and  
 $(  R\langle  S\rangle_n,\mathcal{C}\langle  S\rangle _m) $
 to  $\mathcal{C}\langle  S\rangle_{n+m}  $.

 Suppose  $S$  has    a  total  order  $\prec$.  
 Let  $(S)$   be  the  free  abelian semi-group  generated  by  $S$.   
 The  abelianization  
  of   $\langle  S\rangle $  is  a  canonical   epimorphism  $\pi:  \langle  S\rangle \longrightarrow   (S)$. 
  Let  $(S)_n= \pi\langle  S\rangle_n$.    
 Let  $R(S)_n$  be  the   free  $R$-module   spanned  by  $(S)_n$.  
 The  abelianization   of   $\langle  S\rangle_n $  induces  a  canonical    epimorphism   of  $R$-modules  
 $R(\pi):   R\langle  S\rangle_n\longrightarrow  R(S)_n$
  sending   
  $s_{g(0)} s_{g(1)}\ldots  s_{g(n)}$  to  $ {{\rm  sgn}(g)}  \pi(s_0s_1\ldots  s_n)$
   for  any  $s_0\prec s_1\prec\cdots \prec s_n$  and  any  $g\in \Sigma_{n+1}$,  where 
   $\Sigma_{n+1}$  is  the  permutation  group  on  $0,1,\ldots,  n$  and  ${\rm  sgn}(g)=1$  if  
   $g$  is  an  even  permutation and    ${\rm  sgn}(g)=-1$  if  $g$  is  an  odd  permutation.  
      Let  $\mathcal{C}(S)_n=  R(\pi)(\mathcal{C}\langle  S\rangle_n)$  and 
  $\mathcal{D}(S)_n= R(\pi)(\mathcal{D}\langle  S\rangle)_n$.  
  Then  
  $R(S)_n= \mathcal{C}(S)_n\oplus\mathcal{D}(S)_n$.  
    The  multiplication  of  $(S)$   extends  to  be  a  bilinear  map      
     from  
     $(R( S)_n,  R( S_m)  )$ to  $R(  S)_{n+m}$,     sending  both  
     $(\mathcal{C}( S)_n,  R(  S)_m) $   and  
 $(  R(  S)_n,\mathcal{C}( S) _m) $
 to  $\mathcal{C}(  S)_{n+m}  $.  

     We  call  an  acyclic  element     $s_0s_1  \ldots s_n\in  \langle  S\rangle_n$  
   {\it  non-simplicial}    if there exist integers  $0\leq i<j\leq n$  such that   $s_j\prec  s_i$    (cf.  \cite[Definition~4.1]{camb2023})  and  call  an  acyclic  element     $s_0s_1  \ldots s_n\in  \langle  S\rangle_n$  {\it  simplicial} otherwise.   
    Let $\mathcal{E}\langle   S\rangle _n$   be   the  free   $R$-module   spanned by all the  non-simplicial  acyclic   elements   in    $\langle  S\rangle_n$.       Let $\mathcal{F}\langle   S\rangle _n$   be   the  free   $R$-module   spanned by all the   simplicial  acyclic   elements   in    $\langle  S\rangle_n$.  
    Let  $\mathcal{E}\langle   S\rangle=\oplus_{n\geq  0}     \mathcal{E}\langle   S\rangle_n$
    and   $\mathcal{F}\langle   S\rangle=\oplus_{n\geq  0}     \mathcal{F}\langle   S\rangle_n$. 
     Then   $\mathcal{E}\langle   S\rangle$  and   $\mathcal{F}\langle   S\rangle$   are     graded  sub-$R$-modules  of 
     $\mathcal{D}\langle   S\rangle$  such that  
     \begin{eqnarray}\label{eq-00mca2}
      \mathcal{D}\langle   S\rangle= \mathcal{E}\langle   S\rangle\oplus\mathcal{F}\langle   S\rangle. 
     \end{eqnarray}
                 For  each  $\pi(s_0  \ldots s_n) \in  (S)_n$  where  $s_0 \ldots s_n\in \langle  S\rangle_n$  is  acyclic,  
      there  is  a  unique  simplicial  representative  for  
  $ \pi(s_0  \ldots s_n)$,  
  say  $s_0  \ldots s_n$  such  that  $s_0 \prec  \cdots\prec  s_n$.  
    Therefore,  as graded  $R$-modules, 
  $\mathcal{F}\langle  S\rangle   \cong   D(S) $  and 
    $\mathcal{E}\langle   S\rangle \cong  {\rm  Ker}(\pi:  D\langle  S\rangle\longrightarrow  D(S))$.

  Let  $s_0 \ldots  s_n$  and  $t_0 \ldots  t_m$  be  
  simplicial  acyclic  elements  in  $\langle  S\rangle$.  
  Define  
  $(s_0 \ldots  s_n)*(t_0 \ldots  t_m)$  
  to  be  the  simplicial  representative  in  $\mathcal{F}\langle  S\rangle$  for 
  $ \pi(s_0 \ldots  s_n t_0 \ldots  t_m) $
  if  $s_0 \ldots  s_n t_0\ldots  t_m$  is   acyclic  and  
    to  be  $0$  otherwise.  
  Extend  $*$  to  be  a  bilinear  map  
  from  $(\mathcal{F}\langle   S\rangle, \mathcal{F}\langle   S\rangle)$  to  $\mathcal{F}\langle   S\rangle$  sending  $(\mathcal{F}\langle   S\rangle_n, \mathcal{F}\langle   S\rangle_m)$  to  $\mathcal{F}\langle   S\rangle_{n+m}$.

      Let  $\mathcal{F}\langle  S\rangle_{-1}= R$  and  let  $\tilde {\mathcal{F}}\langle  S\rangle =\mathcal{F}\langle S\rangle  \oplus  \mathcal{F}\langle S\rangle_{-1}$.  
  Let  $s\in  S$.   
Restricting  (\ref{eq-2.za1})  to 
 $\mathcal{F}\langle  S\rangle_n$,  
 we    have     an   induced    $R$-linear map   
\begin{eqnarray}\label{eq-wwwvvva1}
 \overrightarrow{\frac{\partial}{\partial  s}}:  &&  { \mathcal{F}}\langle  S\rangle_n\longrightarrow  \prod_{n+1}  {\mathcal{F}}\langle  S\rangle_{n-1}.
\end{eqnarray}   
Restricting  (\ref{eq-xvca3})  to  
  $\mathcal{C}\langle  S\rangle_n\oplus \mathcal{E}\langle  S\rangle_n$,  
 we    have     an   induced    $R$-linear map  
\begin{eqnarray}\label{eq-0bmdq3}
 \overrightarrow{ds}:  &&  \mathcal{C}\langle  S\rangle_n\oplus\mathcal{E}\langle  S\rangle_n\longrightarrow  \prod_{n+2} (\mathcal{C}\langle  S\rangle_{n+1}\oplus \mathcal{E}\langle  S\rangle_{n+1})
\end{eqnarray}   
which,  with  the  help  of   (\ref{eq-xvca3}),  (\ref{eq-00mca1})  and  (\ref{eq-00mca2}),     induces a   quotient  $R$-linear  map   
\begin{eqnarray}\label{eq-wwwvvva2}
 \overrightarrow{ds}:  &&   {\mathcal{F}}\langle  S\rangle_n\longrightarrow  \prod_{n+2}    {\mathcal{F}}\langle  S\rangle_{n+1}. 
\end{eqnarray} 
Explicitly,    the  map $d_is:  \mathcal{F}\langle  S\rangle_n\longrightarrow\mathcal{F}\langle  S\rangle_{n+1} $  in   $ {\overrightarrow{ds}}=  (d_0s,\ldots,d_{n+1}s)$  in   (\ref{eq-wwwvvva2})  is  given  by 
\footnote[2]{We  point  out  that  there  is  a  detailed minor  mistake  in     \cite[  Sec.  6.1]{jktr2023}.  
The  last  two  formulae  in   \cite[p.  26]{jktr2023}  might  be  revised  according  to  (\ref{eq-ada2b}). }
\begin{eqnarray}\label{eq-ada2b}
d_is (s_0 \ldots  s_{n})=
\begin{cases}
 (-1)^i  s_0 \ldots s_{i-1} s s_i  \ldots s_{n}   &{\rm~if~} s_{i-1}\prec  s\prec  s_i   \\
 0    &{\rm   ~ otherwise~}    
 \end{cases}
\end{eqnarray}
where   $s_0 \prec \cdots  \prec  s_{n}$  and        $0\leq  i\leq  n+1$.   Here  if  $i=0$,  then    $s\prec  s_0$   and   the  righthand side  of  (\ref{eq-ada2b})  
is   $s s_0 \ldots s_{n}$;      and  if   $i=n+1$,  then  $s_n\prec  s$  and   the  righthand side  of  (\ref{eq-ada2b})  
is   $(-1)^{n+1}   s_0 \ldots  s_{n}   s$.

By  (\ref{eq-wwwvvva1}),  
${\partial}/{\partial  s}  = \sum_{i=0}^n   {\partial_i}/{\partial  s}$  
is  an  $R$-linear  map   from  
$ {\mathcal{F}}\langle  S\rangle_n$  to  $  {\mathcal{F}}\langle  S\rangle_{n-1}$.    
  By  (\ref{eq-wwwvvva2}), 
     $ds  = \sum_{i=0}^{n+1}  d_i s$,   
     where   at  most  one  $ d_i s$  is  nonzero  in  the  summation,  
      is  an   $R$-linear  map   from  
$ {\mathcal{F}}\langle  S\rangle_n$  to  $ {\mathcal{F}}\langle  S\rangle_{n+1}$.

 Let  $k\in  \mathbb{N}$.  
 Let  $\alpha\in   \wedge^{2k+1} ( {\partial}/{\partial  s}\mid  s\in  S  )$  
 and    $\omega\in  \wedge^{2k+1}  (  d s \mid  s\in  S  )$.  
    Let  $q\in \mathbb{N}$.  
    We  have  chain  complexes 
   \begin{eqnarray*}
  &&  \cdots   \overset{\alpha_{p+2}}{\longrightarrow}    {\mathcal{F}}\langle   S\rangle_{(p+1)(2k+1)+q}  \overset{\alpha_{p+1}}{\longrightarrow}  {\mathcal{F}}\langle   S\rangle_{p ( 2k+1)+q}\overset{\alpha_p}{\longrightarrow}  {\mathcal{F}}\langle   S\rangle_{(p-1)(2k+1)+q}\overset{\alpha_{p-1}}{\longrightarrow}   \cdots,\\
 &&  \cdots   \overset{\omega_{p-2}}{\longrightarrow}    {\mathcal{F}}\langle   S\rangle_{(p-1)(2k+1)+q}  \overset{\omega_{p-1}}{\longrightarrow}  {\mathcal{F}}\langle   S\rangle_{p ( 2k+1)+q}\overset{\omega_p}{\longrightarrow}  {\mathcal{F}}\langle   S\rangle_{(p+1)(2k+1)+q}\overset{\omega_{p+1}}{\longrightarrow}   \cdots
   \end{eqnarray*}
 denoted  as   $(\tilde {\mathcal{F}}\langle  S\rangle_*, \alpha,q)$  and    $(\tilde {\mathcal{F}}\langle  S\rangle_*, \omega,q)$    respectively.  
  By   (\ref{eq-wwwvvva1}),    $(\tilde {\mathcal{F}}\langle  S\rangle_*, \alpha,q)$   
 is  a  sub-chain  complex  of   $(\tilde  R\langle  S\rangle, \alpha,q)$ 
  and   by (\ref{eq-wwwvvva2}),   $(\tilde {\mathcal{F}}\langle  S\rangle_*, \omega,q)$    
  is  a  quotient  complex  of    $(\tilde  R\langle  S\rangle, \omega,q)$ 
  moduling   its  sub-chain  complex  
  $ (\mathcal{C}\langle  S\rangle_* \oplus\mathcal{E}\langle  S\rangle_*, \omega,  q)$.  
  Let   $l\in\mathbb{N}$.     Let  $\beta\in   \wedge^{2l}  ({\partial}/{\partial  s}\mid  s\in  S )$  and    $\mu\in  \wedge^{2l}  (  d s \mid  s\in  S  )$.   
  Restricted  to    $(\tilde {\mathcal{F}}\langle  S\rangle_*, \alpha,q)$,   (\ref{eq-cm1})   induces   chain  map  
    \begin{eqnarray} \label{eq-cm8d}
 \beta:  &&(\tilde {\mathcal{F}}\langle  S\rangle_*, \alpha,q)\longrightarrow    (\tilde {\mathcal{F}}\langle  S\rangle_*, \alpha,q-2l).
   \end{eqnarray}
   Restricted  to   $ (\mathcal{C}\langle  S\rangle_* \oplus\mathcal{E}\langle  S\rangle_*, \omega,  q)$,   (\ref{eq-cm2})   induces   chain  map  
    \begin{eqnarray} \label{eq-cm9d}
 \mu:  && (\mathcal{C}\langle  S\rangle_* \oplus\mathcal{E}\langle  S\rangle_*, \omega,  q)\longrightarrow     (\mathcal{C}\langle  S\rangle_* \oplus\mathcal{E}\langle  S\rangle_*, \omega,  q+2l).
   \end{eqnarray}
   The  chain  maps   (\ref{eq-cm2})  and  (\ref{eq-cm9d})   induce   a  quotient  chain  map  
  \begin{eqnarray} \label{eq-cm10d}
 \mu:  && (\tilde {\mathcal{F}}\langle  S\rangle_*, \omega,  q)\longrightarrow     (\tilde {\mathcal{F}}\langle  S\rangle_*, \omega,  q+2l).
   \end{eqnarray}
   On  the  other  hand,  
  consider  the  $R$-modules 
 \begin{eqnarray*}
   C((\tilde {\mathcal{F}}\langle  S\rangle_*, \alpha,q),   (\tilde {\mathcal{F}}\langle  S\rangle_*, \alpha,q-2l)), 
~~~  C((\tilde {\mathcal{F}}\langle  S\rangle_*, \omega,q),   (\tilde {\mathcal{F}}\langle  S\rangle_*, \omega,q+2l)).   
   \end{eqnarray*} 
 Take  the  direct  sums   
 \begin{eqnarray*}
  C_-(\tilde {\mathcal{F}}\langle  S\rangle_*, \alpha,q)&=&\bigoplus_{l=0}^\infty   C((\tilde {\mathcal{F}}\langle  S\rangle_*, \alpha,q),   (\tilde {\mathcal{F}}\langle  S\rangle_*, \alpha,q-2l)),\\
  C_+(\tilde {\mathcal{F}}\langle  S\rangle_*, \omega,q)&=&\bigoplus_{l=0}^\infty   C((\tilde {\mathcal{F}}\langle  S\rangle_*, \omega,q),   (\tilde {\mathcal{F}}\langle  S\rangle_*, \omega,q+2l)).
   \end{eqnarray*}
Similar with  (\ref{eq-mmvdaq1}),   define  the  multiplication   $\varphi_1\circ\varphi_2$   for  any   
\begin{eqnarray*}
&\varphi_1\in  C((\tilde {\mathcal{F}}\langle  S\rangle_*, \alpha,q-2l_2),   (\tilde {\mathcal{F}}\langle  S\rangle_*, \alpha,q-2({l_1}+l_2))),\\
&\varphi_2\in  C((\tilde {\mathcal{F}}\langle  S\rangle_*, \alpha,q),   (\tilde {\mathcal{F}}\langle  S\rangle_*, \alpha,q-2l_2)).  
\end{eqnarray*}  
Similar  with  (\ref{eq-mmvdaq2}),  define  the  multiplication  $\psi_1\circ\psi_2$   
for    any   
\begin{eqnarray*}
&\psi_1\in  C((\tilde {\mathcal{F}}\langle  S\rangle_*, \omega,q+2l_2),   (\tilde {\mathcal{F}}\langle  S\rangle_*, \omega,q+2(l_1+{l_2}))),\\
&\psi_2\in  C((\tilde {\mathcal{F}}\langle  S\rangle_*, \omega,q),   (\tilde {\mathcal{F}}\langle  S\rangle_*, \omega,q+2{l_2})).  
\end{eqnarray*}

 \begin{proposition}
 \label{pr-bmaq17-co}
(\ref{eq-cm8d})  induces  a   homomorphism  from  $\wedge^{2*}   ({\partial}/{\partial  s}\mid  s\in  S  )$   to   $ C_-(\tilde {\mathcal{F}}\langle  S\rangle_*, \alpha,q)$  and    (\ref{eq-cm10d})  induces    a  homomorphism  from  $\wedge^{2*}   ( d s  \mid  s\in  S  )$   to   $ C_+(\tilde {\mathcal{F}}\langle  S\rangle_*, \omega,q)$. 
 \end{proposition}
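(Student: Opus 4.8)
The plan is to mirror the proof of Theorem~\ref{pr-bmaq17}, transferring its argument from the free algebra to the sub-chain complex $(\tilde{\mathcal{F}}\langle S\rangle_*, \alpha, q)$ and the quotient chain complex $(\tilde{\mathcal{F}}\langle S\rangle_*, \omega, q)$. As before, the assignments (\ref{eq-cm8d}) and (\ref{eq-cm10d}) are $R$-linear in $\beta$ and in $\mu$, so they automatically furnish homomorphisms of graded $R$-modules from $\wedge^{2*}(\partial/\partial s \mid s\in S)$ to $C_-(\tilde{\mathcal{F}}\langle S\rangle_*, \alpha, q)$ and from $\wedge^{2*}(ds \mid s\in S)$ to $C_+(\tilde{\mathcal{F}}\langle S\rangle_*, \omega, q)$. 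What remains is to verify that each assignment preserves the multiplication, namely that the chain map induced by $\beta_1 \wedge \beta_2$ is the composite $\beta_1 \circ \beta_2$, and likewise for $\mu_1 \wedge \mu_2$ and $\mu_1 \circ \mu_2$, where $\beta_i \in \wedge^{2l_i}(\partial/\partial s \mid s\in S)$ and $\mu_i \in \wedge^{2l_i}(ds \mid s\in S)$.

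For the $\alpha$-part I would argue by restriction. By (\ref{eq-wwwvvva1}) every $\partial/\partial s$ carries $\mathcal{F}\langle S\rangle_*$ into itself, so every $\beta \in \wedge^{2l}(\partial/\partial s \mid s\in S)$ does as well; thus $(\tilde{\mathcal{F}}\langle S\rangle_*, \alpha, q)$ is a sub-chain complex invariant under all the chain maps (\ref{eq-cm1}), and (\ref{eq-cm8d}) is exactly their restriction. The commuting square appearing in the proof of Theorem~\ref{pr-bmaq17} then restricts verbatim to $\tilde{\mathcal{F}}\langle S\rangle_*$, showing that the chain map induced by $\beta_1 \wedge \beta_2 = \beta_2 \wedge \beta_1$ coincides with $\beta_1 \circ \beta_2 = \beta_2 \circ \beta_1$ on the sub-chain complex. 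This gives the first homomorphism.

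For the $\omega$-part I would argue by passing to the quotient. By (\ref{eq-0bmdq3}) every $ds$ preserves the sub-chain complex $(\mathcal{C}\langle S\rangle_* \oplus \mathcal{E}\langle S\rangle_*, \omega, q)$, hence so does every $\mu \in \wedge^{2l}(ds \mid s\in S)$; by (\ref{eq-cm9d}) the chain map (\ref{eq-cm2}) therefore descends to the quotient chain map (\ref{eq-cm10d}). Since the canonical projection onto the quotient is a surjective chain map intertwining each $\mu$ with its descent, and since the relation identifying the chain map induced by $\mu_1 \wedge \mu_2$ with $\mu_1 \circ \mu_2$ already holds on $\tilde R\langle S\rangle_*$ by Theorem~\ref{pr-bmaq17}, the same relation holds on $\tilde{\mathcal{F}}\langle S\rangle_*$. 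This gives the second homomorphism.

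The main point requiring care is precisely the compatibility of the multiplicative structure with the sub- and quotient constructions: for the $\alpha$-part one must know that $\wedge^{2l}(\partial/\partial s \mid s\in S)$ preserves the simplicial acyclic words, and for the $\omega$-part that $\wedge^{2l}(ds \mid s\in S)$ preserves the cyclic-plus-non-simplicial submodule $\mathcal{C}\langle S\rangle_* \oplus \mathcal{E}\langle S\rangle_*$. Both invariances are already secured by the constructions (\ref{eq-wwwvvva1}) and (\ref{eq-0bmdq3}) of this section, so the functoriality reduces to a formal consequence of Theorem~\ref{pr-bmaq17}; I expect no genuine obstacle beyond this bookkeeping.
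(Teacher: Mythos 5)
Your proposal is correct and takes essentially the same route as the paper, whose proof consists of the single remark that the argument is an analogue of Theorem~\ref{pr-bmaq17}; you have simply executed that analogue explicitly, checking that the multiplicativity square restricts to the sub-chain complex $(\tilde{\mathcal{F}}\langle S\rangle_*,\alpha,q)$ via (\ref{eq-wwwvvva1}) and descends to the quotient $(\tilde{\mathcal{F}}\langle S\rangle_*,\omega,q)$ via (\ref{eq-0bmdq3}) and (\ref{eq-cm9d}). The two points you isolate as requiring care are exactly the ones the section's constructions are designed to secure, so no gap remains.
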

 
 \begin{proof}
 The proof  is  an  analogue  of  Theorem~\ref{pr-bmaq17}.  
 \end{proof}

Suppose  both  $S$  and  $S'$  are  sets  with  total  orders.   A  map  $f:  S\longrightarrow  S'$
 is  said to preserve  the  total  order  if  for  any  $s,t\in  S$  such  that  $s\prec t$,  
 we  have  $f(s)\prec  f(t)$.  
    Let  $f:  S\longrightarrow  S'$  be  a  map  preserving  the  total  order.   
  We  have  an  induced  graded  $R$-linear  map  
   $R(f):  R\langle  S\rangle  \longrightarrow  R\langle  S'\rangle  $  given  by
$ R(f)(s_0\ldots  s_n)=f(s_0)\ldots  f(s_n)
$   for  each  $s_0\ldots  s_n\in \langle  S\rangle$.   
  The   composition  of  the   restriction  of  $R(f)$  to    $ \mathcal{F}\langle  S\rangle$  and     
   the  canonical  projection    from  $R\langle  S'\rangle$  to  $\mathcal{D}\langle  S'\rangle$, 
     whose  image  is   in  $\mathcal{F}\langle  S'\rangle$,  
  gives  a   graded  $R$-linear  map 
 \begin{eqnarray}\label{eq-0mmbs6}
 \mathcal{F}(f): ~~~ \mathcal{F}\langle  S\rangle  \overset{R(f)}{\longrightarrow}  R\langle  S'\rangle   \twoheadrightarrow    \mathcal{F}\langle  S'\rangle.  
       \end{eqnarray}
            Let  
  $\wedge(f):  \wedge(\partial/\partial s\mid  s\in  S)\longrightarrow  \wedge(\partial/\partial s'\mid  s'\in  S')$  be  the  homomorphism  of  exterior  algebras  sending  
  $\partial/\partial s$  to  $\partial/\partial (f(s))$  for  any  $s\in  S$  and   
  let  
    $\wedge(f):  \wedge(d s\mid  s\in  S)\longrightarrow  \wedge(d s'\mid  s'\in  S')$  be  the  homomorphism  of  exterior  algebras  sending  
  $ds$  to  $d(f(s))$  for  any   $s\in  S$. 

  \begin{proposition}\label{pr-008.990}
Let  $f:  S\longrightarrow  S'$  be an  injective  map   preserving  the  total  order.  
\begin{enumerate}[(1)]
\item
For  any   $\alpha\in  \wedge(\partial/\partial  s\mid  s\in  S)$  and any  $\xi\in  \mathcal{F}\langle  S\rangle$, 
  \begin{eqnarray*}
 \mathcal{F}(f)( \alpha (\xi))=  \wedge(f)(\alpha)(\mathcal{F}(f)(\xi)); 
  \end{eqnarray*} 
  \item
  For  any  $\omega\in  \wedge(d  s\mid  s\in  S)$  and any  $\xi\in  \mathcal{F}\langle  S\rangle$, 
  \begin{eqnarray*}
 \mathcal{F}(f)( \omega (\xi))=  \wedge(f)(\omega)(\mathcal{F}(f)(\xi)).  
  \end{eqnarray*} 
  \end{enumerate}
  \end{proposition}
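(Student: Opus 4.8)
The plan is to reduce both identities to the case in which $\alpha$ and $\omega$ are single generators, and then verify these reduced identities by a direct computation on simplicial acyclic words. By the $R$-linearity of $\mathcal{F}(f)$, of $\wedge(f)$, and of the actions, it suffices to treat $\alpha = \frac{\partial}{\partial s_1}\wedge\cdots\wedge\frac{\partial}{\partial s_n}$ and $\omega = d s_1\wedge\cdots\wedge d s_n$ monomials, and $\xi = s_0\ldots s_n$ a simplicial acyclic word. By Lemma~\ref{le-2.5.x} the generators $\frac{\partial}{\partial s}$ (resp.\ $ds$) anticommute, so the action of such a monomial on $\mathcal{F}\langle S\rangle$ is the composition $\frac{\partial}{\partial s_1}\circ\cdots\circ\frac{\partial}{\partial s_n}$ (resp.\ $d s_1\circ\cdots\circ d s_n$); since $\wedge(f)$ is an algebra homomorphism it carries this to $\frac{\partial}{\partial f(s_1)}\circ\cdots\circ\frac{\partial}{\partial f(s_n)}$ (resp.\ $d(f(s_1))\circ\cdots\circ d(f(s_n))$). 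Thus, once I establish the single-generator identities
\[
\mathcal{F}(f)\circ\frac{\partial}{\partial s}=\frac{\partial}{\partial f(s)}\circ\mathcal{F}(f),\qquad \mathcal{F}(f)\circ d s = d(f(s))\circ\mathcal{F}(f)
\]
on all of $\mathcal{F}\langle S\rangle$ and for every $s\in S$, I can iterate them term by term along the composition to obtain the general statements (1) and (2).

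The next step is to normalize $\mathcal{F}(f)$ on basis elements. Because $f$ is injective and preserves the total order, for any simplicial acyclic word $s_0\prec\cdots\prec s_n$ the images satisfy $f(s_0)\prec\cdots\prec f(s_n)$, so $f(s_0)\ldots f(s_n)$ is again simplicial acyclic; hence by (\ref{eq-0mmbs6}) the projection onto $\mathcal{F}\langle S'\rangle$ acts as the identity and $\mathcal{F}(f)(s_0\ldots s_n)=f(s_0)\ldots f(s_n)$, with no sign correction or cancellation. I would also record the two consequences of $f$ being a strict order embedding that drive the computation: $f(s)=f(s_i)$ holds if and only if $s=s_i$ (injectivity), and $s_{i-1}\prec s\prec s_i$ holds if and only if $f(s_{i-1})\prec f(s)\prec f(s_i)$ (injectivity together with order preservation).

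For the single-generator identity for $\frac{\partial}{\partial s}$ I would evaluate both sides on $s_0\ldots s_n$. As the word is acyclic, at most one $s_i$ equals $s$: if $s=s_{i_0}$ then $\frac{\partial}{\partial s}(s_0\ldots s_n)=(-1)^{i_0}s_0\ldots\widehat{s_{i_0}}\ldots s_n$, and applying $\mathcal{F}(f)$ and using $f(s)=f(s_{i_0})$ yields exactly $\frac{\partial}{\partial f(s)}(f(s_0)\ldots f(s_n))$; if $s\notin\{s_0,\ldots,s_n\}$ both sides vanish, the right-hand side because injectivity forces $f(s)\notin\{f(s_0),\ldots,f(s_n)\}$. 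For $ds$ I would use the explicit formula (\ref{eq-ada2b}): if $s$ coincides with some $s_i$ both sides vanish, whereas if $s_{i-1}\prec s\prec s_i$ then $ds$ inserts $s$ into position $i$ with sign $(-1)^i$, and the strict-order-embedding property places $f(s)$ in the same position with the same sign inside $f(s_0)\ldots f(s_n)$, so the two sides agree.

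The only delicate point, and the place where the hypothesis that $f$ is injective and order-preserving is genuinely used, is the $ds$ computation: I must ensure that the insertion of $s$ and of $f(s)$ land in the same slot with matching signs, and that the vanishing cases on the two sides correspond exactly. Injectivity rules out the degenerate possibility that $s$ is new while $f(s)$ collides with some $f(s_i)$ (which would leave one side nonzero and the other zero), and strict monotonicity guarantees that the insertion positions coincide. With these single-generator identities established, the reduction of the first paragraph completes the proof of both (1) and (2).
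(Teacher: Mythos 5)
Your proposal is correct and follows essentially the same route as the paper: reduce by $R$-linearity and an iteration/induction to the single-generator cases $\alpha=\partial/\partial s$ and $\omega=ds$ acting on a simplicial acyclic word, then verify those cases by direct computation, using injectivity to get $\delta(s,s_i)=\delta(f(s),f(s_i))$ and order-preservation to match the insertion slot and sign in the $ds$ case. Your treatment is somewhat more explicit than the paper's about why $\mathcal{F}(f)$ sends a simplicial acyclic word to a simplicial acyclic word without a sign correction and about where injectivity is genuinely needed, but the argument is the same.
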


\begin{proof}
(1)   Let  $\alpha = \partial/ \partial  s $  and  $\xi=s_0s_1\ldots  s_n$  where  $s_0\prec  s_1\prec\cdots\prec  s_n$.  
Then  
 \begin{eqnarray*}
   \mathcal{F}(f)( (\partial/\partial s) (s_0\ldots  s_n))  
   &=&   \mathcal{F}(f)\Big(\sum_{i=0}^n  (-1)^i\delta(s,s_i)  s_0\ldots\widehat{s_i} \ldots  s_n\Big)\nonumber\\
&=&\sum_{i=0}^n  (-1)^i\delta( s,s_i)    f(s_0)\ldots\widehat{f(s_i)} \ldots  f(s_n) \nonumber\\
&=&\sum_{i=0}^n  (-1)^i\delta( f(s),f(s_i))    f(s_0)\ldots\widehat{f(s_i)} \ldots  f(s_n) \nonumber\\
&=& ({\partial}/{\partial  f(s)}) ( f( s_0 ) f( s_1 )  \ldots  f( s_n) )\nonumber\\
&=&\wedge(f)( \partial/\partial s )(\mathcal{F}(f)( (s_0\ldots  s_n))).  
\label{eq-9112}
\end{eqnarray*}
 In  general,  for  $\alpha=  \wedge^k(\partial/\partial  s\mid  s\in  S)$  and  $\xi\in  \mathcal{F}\langle  S\rangle_n$,  we  apply    an  induction  on  $k$.    By  the  $R$-linearity,   the  proof      reduces  to   the  special  case  $\alpha =\partial/\partial  s$  and  $\xi=s_0s_1\ldots  s_n$.

(2)   Let  $\omega =ds$     and  
  $\xi = s_0s_1\ldots  s_n$  where   $s_0\prec\cdots\prec  s_n$.  
  If  $s=s_i$  for  some  $0\leq  i\leq  n$,  then  
  $\omega(\xi)=\wedge(f)(\omega)(\mathcal{F}(f)(\xi))=0$.   Thus  without  loss  of  generality,  
  we  assume   $s_{i-1}\prec  s\prec  s_i $  for  some  $0\leq  i\leq  n+1$.   
  Then  
   \begin{eqnarray*}
  \mathcal{F}(f)( ds   (s_0s_1\ldots  s_n)) 
   &=&  \mathcal{F}(f) ((-1)^i  s_0 \ldots  s_{i-1} s s_i  \ldots s_{n} )\nonumber\\
&=&(-1)^i   f(s_0) \ldots  f(s_{i-1})  f(s)  f(s_i)  \ldots f(s_{n}) \nonumber\\
&=&  (d f(s))   (f(s_0)f(s_1)\ldots  f(s_n))\nonumber\\
   &=& \wedge(f)(ds)(\mathcal{F}(f)(s_0s_1\ldots  s_n)).   
  \label{eq-9812}
  \end{eqnarray*}
  In  general,  for  $\omega=  \wedge^k(d  s\mid  s\in  S)$  and  $\xi\in  \mathcal{F}\langle  S\rangle_n$,  we  apply    an  induction  on  $k$.    By  the  $R$-linearity,   the  proof      reduces  to   the  special  case  $\omega =ds$  and  $\xi=s_0s_1\ldots  s_n$. 
\end{proof}

     \begin{corollary}\label{co-008.3}
     Let  $f:  S\longrightarrow  S'$  be  an  injective map   preserving  the  total  order.   For  any  $\alpha\in   \wedge^{2k+1}(\partial/\partial s\mid  s\in  S)$, 
        there  is  an  induced   chain  map
       \begin{eqnarray*}
       \mathcal{F}(f):  ~~~(\tilde {\mathcal{F}}\langle  S\rangle_*, \alpha,q)\longrightarrow  (\tilde {\mathcal{F}}\langle  S'\rangle_*, \wedge(f)(\alpha),q). 
       \end{eqnarray*}
       Moreover,  for  any  $\beta\in   \wedge^{2l}(\partial/\partial s\mid  s\in  S)$,   the  diagram  commutes 
       \begin{eqnarray*}
       \xymatrix{
       (\tilde {\mathcal{F}}\langle  S\rangle_*, \alpha,q)\ar[r]^-{\mathcal{F}(f)} \ar[d]_-{\beta}&  (\tilde {\mathcal{F}}\langle  S'\rangle_*, \wedge(f)(\alpha),q)\ar[d]^-{\wedge(f)(\beta)}\\
            (\tilde {\mathcal{F}}\langle  S\rangle_*, \alpha,q-2l)\ar[r]^-{\mathcal{F}(f)}  &  (\tilde {\mathcal{F}}\langle  S'\rangle_*, \wedge(f)(\alpha),q-2l). 
       }
       \end{eqnarray*}
       \end{corollary}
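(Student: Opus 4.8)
The plan is to derive the entire corollary from Proposition~\ref{pr-008.990}(1), so that the statement reduces to bookkeeping about gradings and commuting squares. First I would check that $\mathcal{F}(f)$ carries $(\tilde{\mathcal{F}}\langle S\rangle_*, \alpha, q)$ degreewise into $(\tilde{\mathcal{F}}\langle S'\rangle_*, \wedge(f)(\alpha), q)$. Since $f$ is injective and order-preserving, any simplicial acyclic word $s_0 \prec \cdots \prec s_n$ has image $f(s_0) \prec \cdots \prec f(s_n)$, again simplicial acyclic of the same length; hence the projection in (\ref{eq-0mmbs6}) acts as the identity on $R(f)(s_0 \ldots s_n)$, and $\mathcal{F}(f)$ restricts to a degree-preserving map $\mathcal{F}\langle S\rangle_{p(2k+1)+q} \to \mathcal{F}\langle S'\rangle_{p(2k+1)+q}$ for every $p$. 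I would also note that $\wedge(f)$ is a degree-preserving homomorphism of exterior algebras, so $\wedge(f)(\alpha) \in \wedge^{2k+1}(\partial/\partial s' \mid s' \in S')$, and because $2$ is invertible in $R$ we have $\wedge(f)(\alpha) \wedge \wedge(f)(\alpha) = 0$; thus $(\tilde{\mathcal{F}}\langle S'\rangle_*, \wedge(f)(\alpha), q)$ is genuinely one of the chain complexes built in Section~\ref{s4}.

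To see that $\mathcal{F}(f)$ is a chain map I would verify the single identity $\mathcal{F}(f) \circ \alpha = \wedge(f)(\alpha) \circ \mathcal{F}(f)$ on each graded piece. This is precisely Proposition~\ref{pr-008.990}(1): for every $\xi \in \mathcal{F}\langle S\rangle$ one has $\mathcal{F}(f)(\alpha(\xi)) = \wedge(f)(\alpha)(\mathcal{F}(f)(\xi))$. Letting $\xi$ range over $\mathcal{F}\langle S\rangle_{p(2k+1)+q}$ yields commutativity of each square of the ladder diagram, so $\mathcal{F}(f)$ intertwines the differentials $\alpha$ and $\wedge(f)(\alpha)$. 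The only point needing separate mention is the augmentation degree $-1$: on $\mathcal{F}\langle S\rangle_{-1} = R$ the map $\mathcal{F}(f)$ is the identity, and compatibility of the differential landing in the augmentation follows from the injectivity of $f$ via $\delta(f(s),f(t)) = \delta(s,t)$, exactly as in the generator-level computation of Proposition~\ref{pr-008.990}(1).

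For the functoriality square I would apply the same identity once more, now with $\beta$ in place of $\alpha$. Read for $\beta \in \wedge^{2l}(\partial/\partial s \mid s \in S)$, Proposition~\ref{pr-008.990}(1) gives $\mathcal{F}(f)(\beta(\xi)) = \wedge(f)(\beta)(\mathcal{F}(f)(\xi))$ for all $\xi$, which is exactly $\mathcal{F}(f) \circ \beta = \wedge(f)(\beta) \circ \mathcal{F}(f)$, i.e. the asserted commutativity. Here $\wedge(f)(\beta) \in \wedge^{2l}(\partial/\partial s' \mid s' \in S')$, so the right-hand vertical arrow is the chain map (\ref{eq-cm8d}) for $S'$; the degree shift by $2l$ on the $S$ side is matched on the $S'$ side because $\mathcal{F}(f)$ is degree-preserving and $\wedge(f)$ preserves the exterior degree. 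I do not expect a genuine obstacle: the corollary is a formal consequence of Proposition~\ref{pr-008.990}(1), and the only delicate bookkeeping is confirming that source and target are legitimate chain complexes and that the augmentation term is treated consistently, both of which are immediate from the constructions of Section~\ref{s4} together with the injectivity of $f$.
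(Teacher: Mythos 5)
Your proposal is correct and follows exactly the route the paper takes: the paper's entire proof is the citation of Proposition~\ref{pr-008.990}~(1), and your argument simply makes explicit the bookkeeping (degree preservation of $\mathcal{F}(f)$ under an injective order-preserving $f$, the intertwining identity giving the chain-map property, and the same identity applied to $\beta$ giving the commuting square) that the paper leaves implicit.
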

       
       \begin{proof}
       The  proof  follows  from  Proposition~\ref{pr-008.990}~(1). 
       \end{proof}

 \begin{corollary}
 \label{co-mzx}
     Let  $f:  S\longrightarrow  S'$  be  an  injective map   preserving  the  total  order.   For  any  $\omega\in   \wedge^{2k+1}( d  s\mid  s\in  S)$, 
        there  is  an  induced   chain  map
       \begin{eqnarray*}
       \mathcal{F}(f):  ~~~(\tilde {\mathcal{F}}\langle  S\rangle_*, \omega,q)\longrightarrow  (\tilde {\mathcal{F}}\langle  S'\rangle_*, \wedge(f)(\omega),q). 
       \end{eqnarray*}
       Moreover,  for  any  $\mu\in   \wedge^{2l}(\partial/\partial s\mid  s\in  S)$,   the  diagram  commutes 
       \begin{eqnarray*}
       \xymatrix{
       (\tilde {\mathcal{F}}\langle  S\rangle_*, \omega,q)\ar[r]^-{\mathcal{F}(f)} \ar[d]_-{\mu}&  (\tilde {\mathcal{F}}\langle  S'\rangle_*, \wedge(f)(\omega),q)\ar[d]^-{\wedge(f)(\mu)}\\
            (\tilde {\mathcal{F}}\langle  S\rangle_*, \omega,q+2l)\ar[r]^-{\mathcal{F}(f)}  &  (\tilde {\mathcal{F}}\langle  S'\rangle_*, \wedge(f)(\omega),q+2l). 
       }
       \end{eqnarray*}
 \end{corollary}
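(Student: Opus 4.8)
The plan is to obtain both assertions as formal consequences of Proposition~\ref{pr-008.990}~(2), in complete parallel with the way Corollary~\ref{co-008.3} was deduced from Proposition~\ref{pr-008.990}~(1). No fresh computation beyond invoking that naturality identity should be required.

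First I would verify that $\mathcal{F}(f)$ is a chain map. The boundary operator of $(\tilde{\mathcal{F}}\langle S\rangle_*, \omega, q)$ is the action of $\omega\in\wedge^{2k+1}(ds\mid s\in S)$, while the boundary operator of the target complex $(\tilde{\mathcal{F}}\langle S'\rangle_*, \wedge(f)(\omega), q)$ is the action of $\wedge(f)(\omega)$. Proposition~\ref{pr-008.990}~(2), applied with this $\omega$, gives $\mathcal{F}(f)(\omega(\xi))=\wedge(f)(\omega)(\mathcal{F}(f)(\xi))$ for every $\xi\in\mathcal{F}\langle S\rangle$, which is exactly the statement that $\mathcal{F}(f)$ intertwines the two boundary operators. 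Since $f$ is injective and order-preserving, it carries each simplicial acyclic word $s_0\prec\cdots\prec s_n$ to a simplicial acyclic word $f(s_0)\prec\cdots\prec f(s_n)$ of the same length, so $\mathcal{F}(f)$ is a grading-preserving graded map; combined with the intertwining identity this shows $\mathcal{F}(f)$ is the desired chain map.

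Next I would check the commutativity of the square. The two vertical arrows are the chain maps induced by $\mu$ and $\wedge(f)(\mu)$; since these act on the $\omega$-complexes, $\mu$ is taken in $\wedge^{2l}(ds\mid s\in S)$ and hence again falls within the scope of Proposition~\ref{pr-008.990}~(2). Applying that proposition with $\omega$ replaced by $\mu$ yields $\mathcal{F}(f)(\mu(\xi))=\wedge(f)(\mu)(\mathcal{F}(f)(\xi))$ for every $\xi\in\mathcal{F}\langle S\rangle$. Reading the two composites around the square, the bottom-left route sends $\xi$ to $\mathcal{F}(f)(\mu(\xi))$ and the top-right route sends $\xi$ to $\wedge(f)(\mu)(\mathcal{F}(f)(\xi))$, so this displayed identity is precisely the commutativity of the diagram.

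I expect no genuine obstacle here: the content has already been absorbed into the naturality identity of Proposition~\ref{pr-008.990}~(2), and the present statement merely repackages it in the chain-map and diagram-commuting language. The only minor points to keep in mind are the degree bookkeeping (that $\mathcal{F}(f)$ preserves degree while $\mu$ and $\wedge(f)(\mu)$ shift the parameter $q$ by the same amount $2l$ on both sides, so the four corners line up) and the fact that the projection onto $\mathcal{F}\langle S'\rangle$ built into $\mathcal{F}(f)$ in (\ref{eq-0mmbs6}) discards no terms on the image of simplicial words; both are automatic from the injectivity and order-preservation of $f$.
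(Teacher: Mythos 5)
Your proposal is correct and follows exactly the paper's route: the paper's entire proof is the single line that the corollary follows from Proposition~\ref{pr-008.990}~(2), and you have simply unpacked how that naturality identity yields both the chain-map property and the commuting square. Your silent correction that $\mu$ must lie in $\wedge^{2l}(ds\mid s\in S)$ (the statement's $\wedge^{2l}(\partial/\partial s\mid s\in S)$ appears to be a typo, by comparison with Corollary~\ref{co-008.3}) is also the right reading.
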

 
  \begin{proof}
       The  proof  follows  from  Proposition~\ref{pr-008.990}~(2). 
       \end{proof}
 
  \section{Hypergraphs  and  their  traces}\label{s---5}

 Let  $S$  be  a  set  of  vertices.    
  Let  $[2^{S}]$  be  the  collection of all   the    finite  subsets  of  $S$.  
  We  call     a  family    $\mathcal{H} $   of   finite  subsets   of  $S$,   i.e.  a  subset  of  $[2^{S}]$,   
  an  {\it  augmented  hypergraph}  on  $S$.   
  An  augmented hypergraph      is  either a  hypergraph   
  or    a  hypergraph  together with  $\emptyset$.  
   An  element  of  $\mathcal{H}$  consisting  of  $n+1$  vertices,   $n\in \mathbb{N}$,   is  called  an  {\it  $n$-hyperedge}.    With   an   abuse  of  notation,  if  $\emptyset\in\mathcal{H}$,  then  we also  call  $\emptyset$   a  hyperedge  of  $\mathcal{H}$.

           We  define  an    {\it   augmented  simplicial complex}  $\mathcal{K}$  on  $S$  
  to  be  an  augmented  hypergraph such  that     any non-empty  subset   of  any  hyperedge  in  $\mathcal{K}$  is  still  a 
  hyperedge in  $\mathcal{K}$.   
   An   $n$-hyperedge  in   $\mathcal{K}$  is     called   an   {\it   $n$-simplex}.  
    We  define  an   {\it    augmented   independence  hypergraph}   $\mathcal{L}$  on   $S$  
 to  be    an  augmented   hypergraph such  that    any  finite   superset  (whose  elements are from  $S$)  of    any  hyperedge  in  $\mathcal{L}$   is  still  a  hyperedge  in  $\mathcal{L}$.  
  Note  that     both   $ [2^S]$  and  $[2^S]\setminus \{\emptyset\}$  are   augmented   simplicial  complexes  as  well   as      
augmented   independence  
  hypergraphs.

  Let  $\mathcal{H}$  be  an  augmented   hypergraph  on  $S$.  
   For  any  subset  $T\subseteq  S$,  the  {\it   trace}  of  $\mathcal{H}$
          on  $T$  is an  augmented  hypergraph  on  $T$  
          \begin{eqnarray*}
          \mathcal{H}\mid_T  = 
          \begin{cases}
           \{\sigma\cap  T   \mid  \sigma\in \mathcal{H}{\rm~such~that~}\sigma\cap  T\neq\emptyset\},   &  
           {\rm~if~}  \emptyset\notin \mathcal{H},\\ 
            \{\sigma\cap  T   \mid  \sigma\in \mathcal{H}{\rm~such~that~}\sigma\cap  T\neq\emptyset\}\cup\{\emptyset\},  &
           {\rm  ~if~}\emptyset\in \mathcal{H}.  
           \end{cases} 
          \end{eqnarray*} 
  For  any  $\sigma\in \mathcal{H}$,  let  
 $
  \Delta\sigma=
  \{\tau\subseteq\sigma\mid \tau\neq\emptyset\} $   if  $\sigma\neq\emptyset$ 
  and  let    $ \Delta\sigma= \sigma $
   if  $\sigma=\emptyset$;   and   
     let   $\bar\Delta\sigma=\{\tau\supseteq  \sigma\mid  \tau\subseteq  S\}$.   
  Let  the {\it associated   augmented  simplicial complex}    
 $
 \Delta\mathcal{H}=\cup_{\sigma\in\mathcal{H}} \Delta\sigma
$ 
 to  be  the  smallest  augmented  simplicial complex  containing   $\mathcal{H}$,    
 the   {\it  lower-associated  augmented  simplicial complex}     
$
 \delta\mathcal{H}=\cup_{\Delta\sigma\subseteq \mathcal{H}} \{\sigma\}
$
 to  be  the  largest augmented  simplicial complex  contained  in   $\mathcal{H}$,    
  the  {\it  associated augmented  independence  hypergraph}   
  $
 \bar  \Delta\mathcal{H}  =\cup_{\sigma\in\mathcal{H}}\bar \Delta\sigma 
   $ 
  to  be    the   smallest augmented  independence hypergraph  containing   $\mathcal{H}$,  
  and  the    {\it  lower-associated  augmented  independence hypergraph} 
  $
  \bar \delta\mathcal{H}=\cup_{\bar\Delta\sigma\subseteq \mathcal{H}} \{\sigma\}
  $   
  to  be  the  largest augmented  independence  hypergraph   contained  in   $\mathcal{H}$.  
  Note  that  $\emptyset\in \mathcal{H}$  iff  
  $\emptyset\in  \delta\mathcal{H}$  iff
  $\emptyset \in  \Delta\mathcal{H}$
   iff  
   $\emptyset \in \bar\Delta\mathcal{H}$    
   iff  $ \bar\Delta\mathcal{H}=  [2^S]$.  
    In  addition,  if  $S$  is  finite,  then  
    we  we  define   the {\it  global  complement} of  $\mathcal{H}$  as  the  augmented  hypergraph     
     $\gamma_S\mathcal{H}=    [2^S] \setminus   \mathcal{H} $      
      and     the  {\it  local  complement} of  $\mathcal{H}$  as  the  augmented  hypergraph  
      $   
      \Gamma_S\mathcal{H}  = \{\Gamma_S(\sigma)\mid  \sigma\in\mathcal{H}\} 
      $   
      where  
$\Gamma_S(\sigma)=  S\setminus \sigma$  for  any  $\sigma\in  [2^S]$.

Let  $\mathcal{H}$  and  $\mathcal{H}'$  be   augmented  hypergraphs  
  on  $S$.   The   intersection  $\mathcal{H}\cap\mathcal{H}'$  
and  the  union  
$\mathcal{H}\cup\mathcal{H}'$     are    augmented  hypergraphs  on  $S$.  
 In  particular,  (1)   for  any  augmented  simplicial  complexes  
 $\mathcal{K}$  and  $\mathcal{K}'$  on  $S$,  
 $\mathcal{K}\cap\mathcal{K}'$  and    
$\mathcal{K}\cup\mathcal{K}'$  are  augmented  
simplicial  complexes  on  $S$;       
 (2)   for  any  augmented  independence  hypergraphs    
 $\mathcal{L}$  and  $\mathcal{L}'$  on  $S$,  
 $\mathcal{L}\cap\mathcal{L}'$  and    
$\mathcal{L}\cup\mathcal{L}'$  are  augmented  
independence  hypergraphs  on  $S$.

Let    $S$  and  $S'$ be   disjoint.   
Let  $\mathcal{H} $     and    $\mathcal{H}' $  be         augmented  hypergraphs
on  $S$  and  $S'$  respectively.   
    The  {\it  join}  of  $\mathcal{H} $  and  
$\mathcal{H}' $  
 is  an  augmented   hypergraph  on  $S\sqcup  S'$  given  by
 \begin{eqnarray}\label{eq-098bg}
  \mathcal{H}*\mathcal{H}' = \{\sigma\sqcup  \sigma'\mid \sigma\in\mathcal{H}{\rm~and~}\sigma'\in\mathcal{H}'\}  \cup  \mathcal{H}\cup \mathcal{H}'.  
  \end{eqnarray}
  Note   that   $\emptyset\in  \mathcal{H}*\mathcal{H}'$  iff  $\emptyset\in\mathcal{H}$  or 
  $\emptyset\in \mathcal{H}'$.  
  In  particular,  
  (1)  
   for  any             augmented  simplicial  complexes  $\mathcal{K} $     and  $\mathcal{K}' $     on  $S$  and  $S'$  respectively,  
   $\mathcal{K} *\mathcal{K}' $    is  an  augmented   simplicial  complex  on  $S\sqcup  S'$;  
  (2)  
    for  any    augmented  independence  hypergraphs     $\mathcal{L} $   and  $\mathcal{L}' $     on  $S$  and  $S'$  respectively,  
     $\mathcal{L} *\mathcal{L}' $     is  an  augmented   independence hypergraph 
  on  $S\sqcup  S'$.

 Let    ${\bf   H} (S)$,  ${\bf  K} (S)$   and  ${\bf  L} (S)$      be  the  set  
   of  augmented  hypergraphs on  $S$,  the  set  of  augmented  simplicial  complexes  on  $S$  
   and  the  set   of   augmented  independence  hypergraphs  on  $S$  respectively.   
   Then  ${\bf  K}(S),  {\bf  L}(S)\subset   {\bf  H}(S)$  and  ${\bf  K}(S)  \cap  {\bf  L}(S)= \{[2^S], [2^S]\setminus\{\emptyset\}\}$.   
  We  have  maps    
  \begin{eqnarray*}
   & \Delta,\delta:   ~~~   {\bf   H} (S)\longrightarrow  {\bf   K} (S),\\ 
    &\bar\Delta,\bar\delta:  ~~~  {\bf   H} (S)\longrightarrow  {\bf   L} (S),\\
    &  \cap,\cup:  ~~~ {\bf   H} (S)\times   {\bf   H} (S) \longrightarrow       {\bf   H} (S). 
    \end{eqnarray*}
    The  restrictions  of  $\Delta$  and   $\delta$   to   ${\bf   K} (S)$  
    are    the  identity  map  while  the  restrictions  of  $\bar\Delta$  and  $\bar\delta$
     to  ${\bf   L} (S)$  are   the  identity  map.  
     Both  $ \cap$  and    $\cup$  send  
     ${\bf   K} (S)\times   {\bf   K} (S)$  to       $ {\bf   K} (S)$  and  send   ${\bf   L} (S)\times   {\bf   L} (S) $  to  $ {\bf   L} (S)$.  Moreover,  for  any  $S$  and  $S'$  disjoint,  we  have  a  map  
    \begin{eqnarray*}
  *:  ~~~  {\bf   H} (S)\times   {\bf   H} (S') \longrightarrow       {\bf   H} (S\sqcup  S')
    \end{eqnarray*}
    which  sends   
       $ {\bf   K} (S)\times   {\bf   K} (S')$  to       ${\bf   K} (S\sqcup  S') $  and  sends   $ {\bf   L} (S)\times   {\bf   L} (S')$  to      $  {\bf   L} (S\sqcup  S')$.  
    Furthermore,  if  
    $S$    is  finite,  then  we  have  maps 
      \begin{eqnarray*}
    &  \gamma_S, \Gamma_S: ~~~   {\bf   H} (S)\longrightarrow {\bf   H} (S)
    \end{eqnarray*}
    such  that  $\gamma_S^2=\Gamma_S^2={\rm  id}$  and  
    both  $\gamma_S$  and  $\Gamma_S$   send    $ {\bf   K} (S)$  to  ${\bf   L} (S)$  and   send   ${\bf   L} (S)$  to
    $ {\bf   K} (S) $.

\begin{theorem}\label{thh=5.1}
For  any  $T\subseteq  S$,   the  trace  of  augmented  hypergraphs  is  a  map  $(-)\mid_T:  {\bf   H}(S) \longrightarrow  {\bf   H}(T)$       such  that  
\begin{enumerate}[(1)]
\item
$(-)\mid_T$  sends     ${\bf   K}  (S)$  to  ${\bf  K} (T)$    and     sends  ${\bf  L}  (S)$  to  ${\bf  L} (T)$;   
\item
$\Delta\circ  (-)\mid_T  = (-)\mid_T  \circ \Delta$   and  $\bar\Delta\circ  (-)\mid_T  = (-)\mid_T  \circ \bar\Delta$; 
\item   
$(-)\mid_T\circ  \cup =\cup \circ  ((-)\mid_T,   (-)\mid_T) $;  
\item
  for   augmented   hypergraph  pairs  $(\mathcal{H},\mathcal{H}')$  such  
that  $\sigma\cap\sigma'\in \mathcal{H}\cap\mathcal{H}'$   for  any  $\sigma\in \mathcal{H}$  and  any $\sigma'\in\mathcal{H}'$,  
   $(-)\mid_T\circ  \cap =\cap \circ  ((-)\mid_T,   (-)\mid_T) $;  
 \item
$(-)\mid_T\circ\Gamma_S=\Gamma_T\circ  (-)\mid_T  $.   
\end{enumerate}
Moreover,  Let  $S$  and  $S'$  be  disjoint  sets.   
For  any  $T\subseteq  S$  and  any  $T'\subseteq  S'$,  
  \begin{enumerate}[(6)]
  \item
  $
(-)\mid_{T\sqcup  T'}\circ  *=   *\circ  ((-)\mid_T,  (-)\mid_{T'}).  
$
\end{enumerate}
\end{theorem}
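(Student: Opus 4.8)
The plan is to reduce every assertion to a computation at the level of individual hyperedges, i.e. to elementary identities about finite subsets of $S$ supplemented by careful bookkeeping of the empty set. Three observations drive the whole argument: for $\sigma,\sigma'\subseteq S$ one has $(\sigma\cap T)\cap(\sigma'\cap T)=(\sigma\cap\sigma')\cap T$ and $(S\setminus\sigma)\cap T=T\setminus(\sigma\cap T)$; the trace is monotone, so $\mathcal{H}\subseteq\mathcal{H}'$ implies $\mathcal{H}\mid_T\subseteq\mathcal{H}'\mid_T$; and, by the chain of equivalences recorded just after the definition of $\bar\Delta\mathcal{H}$, membership of $\emptyset$ is preserved by each of $\Delta,\delta,\bar\Delta,\bar\delta$, so the two branches in the definition of $\mathcal{H}\mid_T$ are never crossed. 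I would establish part~(1) first, as it feeds into part~(2). For an augmented simplicial complex $\mathcal{K}$, a nonempty $\tau\subseteq\sigma\cap T$ satisfies $\tau\subseteq\sigma$ and $\tau\subseteq T$, whence $\tau\in\mathcal{K}$ and $\tau=\tau\cap T\in\mathcal{K}\mid_T$, giving closure under nonempty subsets on $T$. For an augmented independence hypergraph $\mathcal{L}$ the decisive device is upward completion: given $\sigma\cap T\subseteq\tau\subseteq T$ with $\sigma\in\mathcal{L}$, the set $\sigma\cup\tau$ is a finite superset of $\sigma$, hence lies in $\mathcal{L}$, and $(\sigma\cup\tau)\cap T=\tau$, so $\tau\in\mathcal{L}\mid_T$.

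For part~(2) I would use the extremal characterizations of $\Delta$ and $\bar\Delta$. One inclusion is formal: from $\mathcal{H}\subseteq\Delta\mathcal{H}$ and monotonicity, $\mathcal{H}\mid_T\subseteq(\Delta\mathcal{H})\mid_T$, and the latter is a simplicial complex by part~(1), so minimality of $\Delta(\mathcal{H}\mid_T)$ forces $\Delta(\mathcal{H}\mid_T)\subseteq(\Delta\mathcal{H})\mid_T$. For the reverse inclusion I would track a witnessing hyperedge: any $\rho\in\Delta\mathcal{H}$ is a nonempty subset of some $\sigma\in\mathcal{H}$ (or equals $\emptyset$ when $\emptyset\in\mathcal{H}$), so $\rho\cap T$ is a nonempty subset of $\sigma\cap T\in\mathcal{H}\mid_T$ and therefore lies in $\Delta(\mathcal{H}\mid_T)$. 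The statement for $\bar\Delta$ is the mirror image, replacing ``nonempty subset'' by ``finite superset'' and appealing to minimality of the smallest independence hypergraph; it is here that the $\emptyset$-bookkeeping must keep the realization of target hyperedges aligned with the convention in $\mathcal{H}\mid_T$.

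Parts~(3)--(6) are then direct set manipulations. For~(3), a hyperedge of $\mathcal{H}\cup\mathcal{H}'$ belongs to one of the two factors and the $\emptyset$-clause is additive, giving $(\mathcal{H}\cup\mathcal{H}')\mid_T=\mathcal{H}\mid_T\cup\mathcal{H}'\mid_T$. For~(4) the hypothesis enters in the inclusion $\supseteq$: if $\mu\in\mathcal{H}\mid_T\cap\mathcal{H}'\mid_T$, write $\mu=\sigma\cap T=\sigma'\cap T$ with $\sigma\in\mathcal{H}$, $\sigma'\in\mathcal{H}'$; then $\sigma\cap\sigma'\in\mathcal{H}\cap\mathcal{H}'$ by assumption and $(\sigma\cap\sigma')\cap T=(\sigma\cap T)\cap(\sigma'\cap T)=\mu$, so $\mu\in(\mathcal{H}\cap\mathcal{H}')\mid_T$, while $\subseteq$ is automatic. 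For~(5) the identity $(S\setminus\sigma)\cap T=T\setminus(\sigma\cap T)$ shows that tracing the $S$-complements yields exactly the $T$-complements of the traces, which is $\Gamma_T$ performed after the trace. For~(6), unwinding the definition~(\ref{eq-098bg}) of the join and using $(\sigma\sqcup\sigma')\cap(T\sqcup T')=(\sigma\cap T)\sqcup(\sigma'\cap T')$ on the disjoint union identifies $(\mathcal{H}*\mathcal{H}')\mid_{T\sqcup T'}$ with $(\mathcal{H}\mid_T)*(\mathcal{H}'\mid_{T'})$.

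The step I expect to be the genuine obstacle is the realization direction for the upward operators, namely the reverse inclusion of part~(2) for $\bar\Delta$ and the identity~(5) for $\Gamma_S$: in each case a hyperedge produced on the target side must be exhibited as the trace of a suitable hyperedge upstairs, and because the trace discards empty intersections this matching is exactly where hyperedges could be silently lost. Controlling this through the $\emptyset$-equivalences and the upward-completion trick from part~(1) is the crux; the remaining verifications are routine once part~(1) and the three preliminary set identities are in hand.
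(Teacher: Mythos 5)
Your overall strategy --- reducing every assertion to element-level set identities, organized around the three preliminary observations --- is the same as the paper's, and your treatments of (1), (3), (4), (6) and the $\Delta$ half of (2) match the paper's proof up to cosmetic choices: in (1) you use $\tau$ itself as the witness for downward closure rather than the paper's $\eta\cup(\sigma\setminus\tau)$ (and your $\sigma\cup\tau$ for the independence case is literally the paper's witness in disguise), and in (2) you obtain the inclusion $\Delta(\mathcal{H}\mid_T)\subseteq(\Delta\mathcal{H})\mid_T$ from minimality plus part (1) instead of a second element chase. All of that is fine.

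The gap is exactly where you predicted it, and you do not close it: the reverse inclusion $(\bar\Delta\mathcal{H})\mid_T\subseteq\bar\Delta(\mathcal{H}\mid_T)$ in (2), and identity (5). Given $\sigma=\tau\cap T$ with $\tau\supseteq\delta$ for some $\delta\in\mathcal{H}$, the only candidate witness in $\mathcal{H}\mid_T$ lying below $\sigma$ is $\delta\cap T$, but the trace discards this hyperedge whenever $\delta\cap T=\emptyset$ and $\emptyset\notin\mathcal{H}$, and the ``upward-completion trick'' from part (1) is unavailable here because $\mathcal{H}$ itself is not assumed closed under supersets. Concretely, take $S=\{s_0,s_1\}$, $T=\{s_0\}$ and $\mathcal{H}=\{\{s_1\}\}$: then $\mathcal{H}\mid_T$ is the empty family, so $\bar\Delta(\mathcal{H}\mid_T)=\emptyset$ and $\Gamma_T(\mathcal{H}\mid_T)=\emptyset$, while $(\bar\Delta\mathcal{H})\mid_T=\{\{s_0\}\}=(\Gamma_S\mathcal{H})\mid_T$. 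So these two identities cannot be established by any bookkeeping under the stated conventions; they need an extra hypothesis (for instance that every hyperedge of $\mathcal{H}$ meets $T$, or that $\emptyset\in\mathcal{H}$). For what it is worth, the paper's own proof makes the same silent jump (``$\sigma_4\supseteq\delta_4\cap T$, which implies $\sigma_4\in\bar\Delta(\mathcal{H}\mid_T)$'', and a computation of $(\Gamma_S\mathcal{H})\mid_T$ that ignores the discarding clause), so your proposal is no weaker than the published argument --- but as a proof it is incomplete at precisely the step you flagged as the crux, and that step in fact fails as stated.
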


\begin{proof}
 (1)   To  prove  $(-)\mid_T$  sends      ${\bf   K}  (S)$  to  ${\bf  K} (T)$, 
 it  suffices to  prove 
 that  for  any  augmented   simplicial  complex  $\mathcal{K}$  on  $S$,  $\mathcal{K}\mid _T$  is  a    augmented   simplicial   complex     on  $T$.  
 Let  $\mathcal{K}$  be  an  augmented  simplicial  complex   on  $S$.  
 Let  $\tau\in  \mathcal{K}\mid _T$.  
 Without  loss  of  generality,  suppose  $\tau\neq\emptyset$.  
 Then  there  exists  $\sigma\in\mathcal{K}$  such  that  
 $\tau =\sigma\cap  T$.     
   Let  $\eta\subseteq  \tau$  be  a  nomempty  subset  of  $\tau$.  
Then  $\eta=  \sigma'\cap  T$  where  $\sigma'=\eta\cup (\sigma\setminus  \tau)$.  
Since  $\sigma'\subseteq \sigma$,  we  have   $\sigma'\in \mathcal{K}$.   
Thus  $\eta\in  \mathcal{K}\mid _T$.  
Therefore,   $\mathcal{K}\mid _T$  is  an  augmented    simplicial   complex  on  $T$.  

 To  prove  $(-)\mid_T$  sends      ${\bf   L}  (S)$  to  ${\bf  L} (T)$, 
 it  suffices to  prove 
 that   for  any   augmented   independence  hypergraph  $\mathcal{L}$  on  $S$,  $\mathcal{L}\mid _T$  is  an   augmented   independence   hypergraph      on  $T$.  
 Let  $\mathcal{L}$  be  an  augmented  independence  hypergraph  on  $S$.  
 Let  $\tau\in  \mathcal{L}\mid _T$.  
There  exists  $\sigma\in\mathcal{L}$  such  that  
$\tau =\sigma\cap  T$.    
Let  $\tau\subseteq \eta\subseteq  T$.  
Then  $\eta=  \sigma'\cap  T$  where  $\sigma'=\eta\cup (\sigma\setminus  \tau)$.  
Since  $\sigma \subseteq \sigma'$,   we  have  $\sigma'\in \mathcal{L}$.   
Thus  $\eta\in  \mathcal{L}\mid _T$.  
Therefore,   $\mathcal{L}\mid _T$  is  an  augmented  independence  hypergraph  on  $T$.

 (2)   Let  $\mathcal{H}$  be  an   augmented   hypergraph  on  $S$.  
  Let   $\sigma_1\in  \Delta(\mathcal{H}\mid  _T) $.   
    There  exists   $\tau _1\in  \mathcal{H}\mid  _T$  such  that  $\sigma_1\subseteq\tau_1$.   
  There  exists  $\delta_1\in \mathcal{H}$  such  that  $\tau _1=\delta_1\cap  T$.    
  It  follows  that   $\sigma_1\subseteq \delta_1\cap  T$,  which  implies   
    $\sigma_1\in  (\Delta  \mathcal{H})\mid  _T$.  
  Hence   
  $ \Delta(\mathcal{H}\mid  _T)\subseteq  (\Delta  \mathcal{H})\mid  _T$.  
   Let    $\sigma_2 \in (\Delta  \mathcal{H})\mid  _T$.  There  exists  $\tau_2\in  \Delta\mathcal{H}$
  such  that  $\sigma_2= \tau_2\cap  T$.  
  There  exists  $\delta_2\in\mathcal{H}$  such  that  $\tau_2\subseteq\delta_2$.  
      Thus   $\sigma_2\subseteq  \delta_2\cap   T$,  which  implies  $\sigma_2\in  \Delta(\mathcal{H}\mid  _T)$. 
  Hence  $  (\Delta  \mathcal{H})\mid  _T \subseteq  \Delta(\mathcal{H}\mid  _T)$.  
 Therefore,  
$\Delta(\mathcal{H}\mid  _T)   =  (\Delta  \mathcal{H})\mid  _T$.   
We  obtain  $\Delta\circ  (-)\mid_T  = (-)\mid_T  \circ \Delta$.

    Let  $\sigma_3\in  \bar \Delta(\mathcal{H}\mid  _T)$.  
   There  exists  $\tau _3\in  \mathcal{H}\mid  _T$  such  that  $\sigma_3\supseteq\tau_3$.     
    There  exists  $\delta_3\in \mathcal{H}$  such  that   $\tau _3=\delta_3\cap  T$.  
 It  follows  that   $\sigma_3\supseteq \delta_3\cap  T$,  which  implies   
    $\sigma_3\in  (\bar\Delta  \mathcal{H})\mid  _T$.  
Hence  $\bar \Delta(\mathcal{H}\mid  _T)\subseteq  (\bar\Delta  \mathcal{H})\mid  _T$.  
 Let    $\sigma_4 \in (\bar\Delta  \mathcal{H})\mid  _T$.  There  exists  $\tau_4\in \bar \Delta \mathcal{H}$
  such  that  $\sigma_4= \tau_4\cap  T$.    
  There  exists  $\delta_4\in\mathcal{H}$  such  that  $\tau_4\supseteq\delta_4$.  
      Thus   $\sigma_4\supseteq  \delta_4\cap   T$,  which  implies
   $\sigma_4\in  \bar\Delta(\mathcal{H}\mid  _T)$. 
  Hence  $  (\bar\Delta  \mathcal{H})\mid  _T \subseteq \bar \Delta(\mathcal{H}\mid  _T)$.  
  Therefore, 
 $\bar \Delta(\mathcal{H}\mid  _T)   =  (\bar\Delta  \mathcal{H})\mid  _T$.    
 We  obtain  $\bar\Delta\circ  (-)\mid_T  = (-)\mid_T  \circ \bar\Delta$.

 (3)    Let  $\mathcal{H}$  and  $\mathcal{H}'$   be  augmented  hypergraphs  on  $S$. 
   Let  $\tau _1\in  (\mathcal{H}\cup\mathcal{H}')\mid _T$.   
Then  there  exists  $\epsilon_1\in \mathcal{H}\cup\mathcal{H}'$  such  that  
$\tau_1  = \epsilon_1\cap  T$.  
It  follows  that  $\tau_1 \in   (\mathcal{H}\mid_T) \cup  (\mathcal{H}'\mid_T) $.   
Conversely,  let  
$\delta_1\in    (\mathcal{H}\mid_T) \cup  (\mathcal{H}'\mid_T) $.  
Then  either  there  exists  $\sigma_1\in \mathcal{H}$  such that  
$\tau_1=\sigma_1\cap  T$  or   there  exists  $\sigma'_1\in \mathcal{H}'$  such that  
$\tau_1=\sigma'_1\cap  T$.   Thus there  exists  $\eta_1\in \mathcal{H}\cup\mathcal{H}'$  such  that  
$\tau  = \eta_1\cap  T$.  
It  follows  that  $\delta_1\in   (\mathcal{H}\cup\mathcal{H}')\mid _T$.  
We  obtain  $ (\mathcal{H}\cup\mathcal{H}')\mid _T =  (\mathcal{H}\mid_T) \cup  (\mathcal{H}'\mid_T)  
$.

 (4)   Let  $\mathcal{H}$  and  $\mathcal{H}'$   be  augmented  hypergraphs  on  $S$.  
 Let  $\tau_2 \in  (\mathcal{H}\cap\mathcal{H}')\mid _T$.   
Then  there  exists  $\epsilon_2\in \mathcal{H}\cap\mathcal{H}'$  such  that  
$\tau_2  = \epsilon_2\cap  T$.  
Hence  $\tau_2\in  (\mathcal{H}\mid_T )\cap  (\mathcal{H}'\mid_T)$.  
We  obtain  $
(\mathcal{H}\cap\mathcal{H}')\mid _T \subseteq    (\mathcal{H}\mid_T )\cap  (\mathcal{H}'\mid_T)    
$. 
In  addition, suppose  $\sigma_2\cap\sigma'_2\in  \mathcal{H}\cap\mathcal{H}'$ 
 for  any  $\sigma_2\in \mathcal{H}$  and  any  $\sigma'_2\in\mathcal{H}'$.    
 Let  $\delta_2 \in   (\mathcal{H}\mid_T )\cap  (\mathcal{H}'\mid_T)$.  
 Then  there  exist   $\eta_2\in \mathcal{H}$  and  $\eta'_2\in \mathcal{H}'$  such  that  
  $\delta _2=\eta_2\cap  T=\eta'_2\cap   T$.  
  Consequently,  
  $\delta_2= (\eta_2\cap\eta'_2) \cap  T$.  
  By  our  assumption,  $\eta_2\cap\eta'_2\in  \mathcal{H}\cap\mathcal{H}'$.  
  Hence  $\delta_2\in  (\mathcal{H}\cap\mathcal{H}')\mid _T$.  
  We  obtain  $
(\mathcal{H}\cap\mathcal{H}')\mid _T =    (\mathcal{H}\mid_T )\cap  (\mathcal{H}'\mid_T)    
$.

  (5)   Suppose  $S$  is  finite.   
  For  any     augmented  hypergraph   $\mathcal{H}$  on  $S$,  
 $
(\Gamma_S\mathcal{H} )\mid_T  = \{(S\setminus \sigma)\cap  T\mid  \sigma\in   \mathcal{H} \} 
 =  \{ T\setminus( \sigma \cap  T)\mid  \sigma\in   \mathcal{H} \} 
 =   \Gamma_T(\mathcal{H}\mid_T)$.    
  Therefore,  $(-)\mid_T\circ\Gamma_S=\Gamma_T\circ  (-)\mid_T  $.

  (6)  Let  $S$  and   $S'$  be  disjoint.  
  Let  $\mathcal{H} $  and     $\mathcal{H}' $    be      augmented    hypergraphs  on  $S$  and   $S'$
 respectively.  
Let  $T\subseteq  S$  and    $T'\subseteq  S'$.
By     (\ref{eq-098bg}),  
\begin{eqnarray*}
(\mathcal{H}*\mathcal{H}')\mid_{T\sqcup  T'} &=&  (\{\sigma\sqcup\sigma'\mid  \sigma\in  \mathcal{H} {\rm~and~}   \sigma'\in  \mathcal{H}' \}   \cup \mathcal{H} \cup \mathcal{H}')\mid_{T\sqcup  T'}\\
&=&(\{\sigma\sqcup\sigma'\mid  \sigma\in  \mathcal{H} {\rm~and~}   \sigma'\in  \mathcal{H}' \} \mid _{T\sqcup  T'})\cup  (\mathcal{H}\mid_T)  \cup (\mathcal{H}'\mid_{T'})\\
&=&  \{(\sigma\cap  T)\sqcup(\sigma'\cap  T')\mid  \sigma\in  \mathcal{H} {\rm~and~}  \sigma'\in  \mathcal{H}' \} \cup  (\mathcal{H}\mid_T)  \cup (\mathcal{H}'\mid_{T'})
\\
&=&  \{\tau\sqcup  \tau\mid   \tau\in  \mathcal{H}\mid_T {\rm~and~} \tau'\in  \mathcal{H}'\mid_{T'}\} \cup  (\mathcal{H}\mid_T)  \cup (\mathcal{H}'\mid_{T'})
\\
 &=&(\mathcal{H}\mid  _T)  *(\mathcal{H}'\mid_{T'}), 
\end{eqnarray*}
where  $ \tau=\sigma\cap T $  and  $ \tau'=\sigma'\cap T'$.  
Therefore,  $(-)\mid_{T\sqcup  T'}\circ  *=   *\circ  ((-)\mid_T,  (-)\mid_{T'})$.    
\end{proof}

Let  $\mathcal{H}$  and     $\mathcal{H}'$  be    augmented  hypergraphs  on  $S$  
 and   $S'$  respectively.  
A  {\it  morphism}  $\varphi:  \mathcal{H}\longrightarrow  \mathcal{H}'$  
is  a  map  $\varphi_0:  S\longrightarrow  S'$  such  that  for  any  $\sigma\in \mathcal{H}$,  
its  image    
  $\varphi(\sigma)= \{\varphi_0(s)\mid  s\in\sigma\}$ 
  is  a  hyperedge  in  $\mathcal{H}'$.  
  Such  a  morphism  must  send   $\emptyset \in\mathcal{H}$  to  $\emptyset\in \mathcal{H}'$.   
In  particular,  let  $\mathcal{K}$  and  $\mathcal{K}'$  
be  augmented  simplicial  complexes  on  $S$  and  $S'$  respectively.   
A  morphism  $\varphi:  \mathcal{K}\longrightarrow \mathcal{K}'$  is  a  {\it simplicial  map}.  
Let  $\mathcal{L}$  and  $\mathcal{L}'$  
be  augmented  independence  hypergraphs   on  $S$  and  $S'$  respectively.
 A  morphism $\varphi:  \mathcal{L}\longrightarrow \mathcal{L}'$  is  a  {\it  morphism  of  augmented  independence  hypergraphs}.       

  Let  $\varphi:  \mathcal{H}\longrightarrow \mathcal{H}'$  be   a  morphism  between  augmented  
  hypergraphs  induced  by  a  map  $\varphi_0:  S\longrightarrow  S'$.        
 Then  the  diagram   commutes  
  \begin{eqnarray}\label{eq-vmbg1}
  \xymatrix{
 \delta \mathcal{H}\ar[r]^-{\delta\varphi} \ar[d] &\delta\mathcal{H}' \ar[d]\\
  \mathcal{H}\ar[r]^-{\varphi} \ar[d] &\mathcal{H}' \ar[d] \\
    \Delta\mathcal{H} \ar[r]^-{\Delta\varphi}    & \Delta\mathcal{H}'     
  }
  \end{eqnarray}
where  both  $\delta\varphi$  and  $\Delta\varphi$  are       simplicial  maps  induced  by  
$\varphi_0$  and 
 all  the  vertical  maps  are  canonical  inclusions.    In  addition,  if    $\varphi_0:  S\longrightarrow  S'$  is  bijective,  then   
  the  diagram  commutes   
  \begin{eqnarray}\label{eq-vmbg2}
  \xymatrix{
 \bar\delta \mathcal{H}\ar[r]^-{\bar\delta\varphi} \ar[d] &\bar\delta\mathcal{H}' \ar[d]\\
  \mathcal{H}\ar[r]^-{\varphi} \ar[d] &\mathcal{H}' \ar[d] \\
    \bar\Delta\mathcal{H} \ar[r]^-{\bar\Delta\varphi}    & \bar\Delta\mathcal{H}'     
  }
  \end{eqnarray}
 where  both  $\bar\delta\varphi$  and  $\bar\Delta\varphi$  are    morphisms  of  augmented  independence  hypergraphs  induced  by  
$\varphi_0$     and 
 all  the  vertical  maps  are  canonical  inclusions.  
   Let  $T\subseteq  S$  and  let  $T'=\varphi_0(T) \subseteq  S'$.  
  Then  we  have  an  induced  morphism  
  $\varphi\mid_T:  \mathcal{H}\mid_T\longrightarrow  \mathcal{H}'\mid _{T'}$
  sending  $\sigma \cap  T$   to  $\varphi(\sigma)\cap  T'$  for  any  $\sigma\in \mathcal{H}$.

 \begin{corollary}\label{le-99mb1}
 For  any  morphism  $\varphi:  \mathcal{H}\longrightarrow  \mathcal{H}'$  between  augmented  hypergraphs  induced  by  $\varphi_0:  S\longrightarrow  S'$,   
 \begin{eqnarray}\label{eq-6.dd1}
 \Delta(\varphi\mid_T)=  (\Delta\varphi)\mid_T.
  \end{eqnarray}
 Moreover,  if   $\varphi_0:  S\longrightarrow  S'$  is  bijective,  then   
\begin{eqnarray} \label{eq-6.dd2}
  \bar\Delta(\varphi\mid_T)=  (\bar\Delta\varphi)\mid_T. 
  \end{eqnarray}
 \end{corollary}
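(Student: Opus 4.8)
The plan is to reduce both identities to the object-level commutativity already recorded in Theorem~\ref{thh=5.1}~(2), using that a morphism of augmented hypergraphs is completely determined by its underlying vertex map together with its source and target. Write $\varphi_0\mid_T\colon T\longrightarrow T'$ for the corestriction to $T'=\varphi_0(T)$ of the restriction of $\varphi_0$ to $T$; this is the vertex map underlying $\varphi\mid_T$. The point I would emphasize is that every morphism occurring in (\ref{eq-6.dd1}) and (\ref{eq-6.dd2}) is, by construction, induced by a restriction or corestriction of the single map $\varphi_0$: by diagram~(\ref{eq-vmbg1}) the simplicial map $\Delta\varphi$ is induced by $\varphi_0$, passing to the trace $(-)\mid_T$ replaces $\varphi_0$ by $\varphi_0\mid_T$, and applying $\Delta$ leaves the vertex map unchanged. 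Hence both $\Delta(\varphi\mid_T)$ and $(\Delta\varphi)\mid_T$ are induced by one and the same vertex map $\varphi_0\mid_T$.

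For (\ref{eq-6.dd1}) I would then invoke the identity $\Delta\circ(-)\mid_T=(-)\mid_T\circ\Delta$ from Theorem~\ref{thh=5.1}~(2), applied once to $\mathcal{H}$ and once to $\mathcal{H}'$, to obtain the equalities of augmented simplicial complexes $\Delta(\mathcal{H}\mid_T)=(\Delta\mathcal{H})\mid_T$ and $\Delta(\mathcal{H}'\mid_{T'})=(\Delta\mathcal{H}')\mid_{T'}$. Thus $\Delta(\varphi\mid_T)$ and $(\Delta\varphi)\mid_T$ are simplicial maps sharing the same source and the same target, and by the previous paragraph they have the same vertex map $\varphi_0\mid_T$; since a simplicial map is determined by its vertex map, they coincide, which is (\ref{eq-6.dd1}). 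The identity (\ref{eq-6.dd2}) is proved verbatim with $\bar\Delta$ in place of $\Delta$, now using the companion identity $\bar\Delta\circ(-)\mid_T=(-)\mid_T\circ\bar\Delta$ of Theorem~\ref{thh=5.1}~(2). The hypothesis that $\varphi_0$ is bijective enters exactly here: the morphism $\bar\Delta\varphi$ of augmented independence hypergraphs is only defined under this hypothesis (cf.\ diagram~(\ref{eq-vmbg2})), and bijectivity of $\varphi_0$ forces $\varphi_0\mid_T\colon T\longrightarrow T'=\varphi_0(T)$ to be a bijection, so that $\bar\Delta(\varphi\mid_T)$ is likewise well-defined; the two sides again share source, target, and vertex map, hence agree.

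The step I expect to require the most care is the claim, used implicitly above, that the trace of a morphism acts by applying $\varphi_0$ vertex-wise, i.e.\ that its underlying vertex map is genuinely $\varphi_0\mid_T$. Unwinding the definition of $\varphi\mid_T$ one must check that $\{\varphi_0(v)\mid v\in\sigma\cap T\}=\varphi(\sigma)\cap T'$ for $\sigma\in\mathcal{H}$; this is immediate when $\varphi_0$ is injective but can fail in general (a vertex of $T'$ may be hit by $\sigma$ through a point lying outside $T$), and it is precisely this phenomenon that restricts (\ref{eq-6.dd2}) to bijective $\varphi_0$. For (\ref{eq-6.dd1}) I would sidestep the issue by carrying out the whole argument at the level of vertex maps on the already-identified simplicial complexes $\Delta(\mathcal{H}\mid_T)=(\Delta\mathcal{H})\mid_T$, where the action of each map on a simplex $\tau$ is unambiguously $\{\varphi_0(v)\mid v\in\tau\}$. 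Everything else is bookkeeping on top of Theorem~\ref{thh=5.1}.
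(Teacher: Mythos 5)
Your proposal is correct and follows essentially the same route as the paper: both reduce the identity to the object-level commutation $\Delta\circ(-)\mid_T=(-)\mid_T\circ\Delta$ (resp.\ $\bar\Delta\circ(-)\mid_T=(-)\mid_T\circ\bar\Delta$) of Theorem~\ref{thh=5.1}~(2) and then observe that the two morphisms share source, target, and underlying vertex map $\varphi_0\mid_T$, which the paper packages as a commutative diagram built from (\ref{eq-vmbg1}) and (\ref{eq-vmbg2}). Your closing remark on why $\{\varphi_0(v)\mid v\in\sigma\cap T\}$ may be a proper subset of $\varphi(\sigma)\cap T'$ for non-injective $\varphi_0$ is a careful point the paper's proof glosses over, but it does not change the argument.
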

 
 \begin{proof}
 Let  $\varphi:  \mathcal{H}\longrightarrow  \mathcal{H}'$  be  a  morphism.  
 We  have  an  induced   morphism   $\varphi\mid_T:  \mathcal{H}\mid_T\longrightarrow  \mathcal{H}'\mid _{T'}$.  By  the  first identity  in Theorem~\ref{thh=5.1}~(2)    and  (\ref{eq-vmbg1}),  the   diagram  commutes 
 \begin{eqnarray*}
 \xymatrix{
 \mathcal{H}\mid_T\ar[r]^-{\varphi\mid_T} \ar[d] &\mathcal{H}' \mid_{T'}\ar[d] \\
 \Delta( \mathcal{H}\mid_T )  \ar[r]^-{\Delta(\varphi\mid_T)} \ar@{=}[d] 
   & \Delta(\mathcal{H}' \mid_{T'})\ar@{=}[d]\\
    (\Delta\mathcal{H})\mid_T \ar[r]^-{(\Delta\varphi)\mid_T}    & (\Delta\mathcal{H}')\mid_{T'}.  
 }
 \end{eqnarray*}
 We  obtain  (\ref{eq-6.dd1}).   Suppose  in  additon  $\varphi_0:  S\longrightarrow  S'$  is  bijective.  
 Then   by   the  second  identity  in Theorem~\ref{thh=5.1}~(2)   and  (\ref{eq-vmbg2}),  the   diagram  commutes 
 \begin{eqnarray*}
 \xymatrix{
 \mathcal{H}\mid_T\ar[r]^-{\varphi\mid_T} \ar[d] &\mathcal{H}' \mid_{T'}\ar[d] \\
 \bar\Delta( \mathcal{H}\mid_T )  \ar[r]^-{\bar\Delta(\varphi\mid_T)} \ar@{=}[d] 
   & \bar\Delta(\mathcal{H}' \mid_{T'})\ar@{=}[d]\\
    (\bar\Delta\mathcal{H})\mid_T \ar[r]^-{(\bar\Delta\varphi)\mid_T}    & (\bar\Delta\mathcal{H}')\mid_{T'}.  
 }
 \end{eqnarray*}
 We  obtain  (\ref{eq-6.dd2}). 
 \end{proof}

\begin{example}
Let  $S=\{s_0,s_1,s_2\}$  and  $T= \{s_0,s_1\}$.  
\begin{enumerate}[(1)]
\item
Let  $\mathcal{H}=  \{\emptyset, \{s_0\},  \{s_0,s_1\},  \{s_0,s_2\},    \{s_1,s_2\},   \{s_0,s_1,s_2\}\}$.  
Then    
\begin{eqnarray*}
&\Delta\mathcal{H}=[2^S],  ~~~~~~
 \delta\mathcal{H}= \{\emptyset,  \{s_0\}\},  
~~~~~~\bar\Delta\mathcal{H}  =[ 2^S],\\
& \bar\delta\mathcal{H} =  \{\{s_0\},  \{s_0,s_1\},  \{s_0,s_2\},    \{s_1,s_2\},   \{s_0,s_1,s_2\}\},\\
&\gamma_S\mathcal{H}  =\{\{s_1\},  \{s_2\}\}\\   
& \Gamma_S\mathcal{H}  =  \{ \{s_0,s_1,s_2\},  \{s_1,s_2\},  \{s_2\}, \{s_1\},  \{s_0\}, \emptyset\}. 
\end{eqnarray*} 
Moreover,  
\begin{eqnarray*}
&\mathcal{H}\mid_T= ( \Delta\mathcal{H})\mid  _T = (\bar\Delta\mathcal{H}) \mid_T =(\Gamma_S \mathcal{H})\mid_T = \{\emptyset, \{s_0\},   \{s_1\}, \{s_0,s_1\}  \},\\
&   (\delta\mathcal{H})\mid_T=  \{\emptyset,  \{s_0\}\},~~~~~~
      (\bar\delta\mathcal{H})\mid_T=  \{ \{s_0\},\{s_1\},  \{s_0,s_1\}\},~~~~~~
  (\gamma_S\mathcal{H})\mid_T  = \{\{s_1\}\}.  
 \end{eqnarray*}
 
\item
Let  $\mathcal{H}'=   \{\emptyset, \{s_0\}, \{s_1\}, \{s_0,s_1\},  \{s_0,s_2\},    \{s_1,s_2\},   \{s_0,s_1,s_2\}\}$.  Then 
\begin{eqnarray*}
&\Delta\mathcal{H}'=[2^S],   
~~~~~~\delta\mathcal{H}'= \{\emptyset,  \{s_0\}, \{s_1\},  \{s_0,s_1\}\},  
~~~~~~\bar\Delta\mathcal{H}'  =[ 2^S],  \\
&\bar\delta\mathcal{H} '=  \{\{s_0\},  \{s_1\},  \{s_0,s_1\},  \{s_0,s_2\},    \{s_1,s_2\},   \{s_0,s_1,s_2\}\},\\
&\gamma_S\mathcal{H}' =  \{\emptyset, \{s_2\}\}, \\ 
& \Gamma_S\mathcal{H}'=\{\{s_0,s_1,s_2\}, \{s_1,s_2\},  \{s_0,s_2\}, \{s_2\},  \{s_1\}, \{s_0\},  \emptyset\}.   
\end{eqnarray*}  
Moreover,  
\begin{eqnarray*}
&\mathcal{H}'\mid_T =(\Delta\mathcal{H}')\mid_T=(\delta\mathcal{H}')\mid_T=(\bar\Delta\mathcal{H}')\mid_T=(\Gamma_S \mathcal{H}')\mid_T = \{\emptyset, \{s_0\},   \{s_1\}, \{s_0,s_1\}  \},\\
&(\bar\delta\mathcal{H} ')\mid_T =    \{ \{s_0\},   \{s_1\}, \{s_0,s_1\}  \},~~~~~~
 (\gamma_S\mathcal{H}' )=\{\emptyset\}. 
\end{eqnarray*} 

\item
Let  $\mathcal{H}$  be  given in  (1)  and  let  $\mathcal{H}'$  be  given  in  (2).  
Let  $\varphi:  \mathcal{H}\longrightarrow  \mathcal{H}'$  be  the  canonical  inclusion  of  augmented  hypergraphs  induced  by  the  identity  map  on  $S$.  Then  $\varphi$  induces  simplicial  maps  
\begin{eqnarray*}
\Delta\varphi:  ~~~  \Delta\mathcal{H}\longrightarrow  \Delta\mathcal{H}',~~~~~~  
\delta\varphi: ~~~ \delta\mathcal{H}\longrightarrow  \delta\mathcal{H}'
\end{eqnarray*}
and  morphisms  of   augmented  independence  hypergraphs 
\begin{eqnarray*}
\bar\Delta\varphi:  ~~~  \bar\Delta\mathcal{H}\longrightarrow  \bar\Delta\mathcal{H}',~~~~~~  
\bar\delta\varphi: ~~~ \bar\delta\mathcal{H}\longrightarrow  \bar\delta\mathcal{H}'   
\end{eqnarray*}
such  that  $ \Delta(\varphi\mid_T)=  (\Delta\varphi)\mid_T$  and   $\bar\Delta(\varphi\mid_T)=  (\bar\Delta\varphi)\mid_T$.  
\end{enumerate}

\end{example}

\section{Invariant  traces  of  hypergraphs}\label{s---6}

Let  $S$  be  a  set  of  vertices  with  a  total  order.  
  For  each  hyperedge  $\{s_0,s_1,\ldots, s_n\}$  on  $S$, 
 we  choose  the  unique     representative  $s_0s_1\ldots  s_n$ such  that  $s_0\prec  s_1\prec 
\cdots \prec  s_n$.  
Then  the  set  $[2^S]$  can  be   identified  with  the  set  of   all  simplicial  acyclic  elements  in    $\langle  S\rangle$  together  with  $\emptyset$. 
Consequently,   the       $R$-module  
     $R_n( [2^S])$  can  be  identified  with  $\mathcal{F}\langle  S\rangle_n$  and  
     the  $R$-module    $R_{-1}( [2^S])$  can  be  identified  with 
     $  R$.

For  any  augmented  hypergraph  $\mathcal{H}$  on  $S$ and  any  $n\geq  0$,  
let   $R_n(\mathcal{H})$  be  the  free  $R$-module  generated  by  
 all  the  $n$-hyperedges  in  $\mathcal{H}$.   
 Let  $R_{-1}(\mathcal{H})=0$   if  $\emptyset\notin \mathcal{H}$  and  let 
 $R_{-1}(\mathcal{H}) =R $  if  $\emptyset\in\mathcal{H}$.  
  Let  $  R_*(\mathcal{H})= \oplus_{n\geq   0}  R_n(\mathcal{H})$  and  let  
$\tilde  R_*(\mathcal{H})= \oplus_{n\geq   -1}  R_n(\mathcal{H})$.  
Since  $\mathcal{H}$  is  a  subset  of  $[2^S]$,  
  $\tilde  R_*(\mathcal{H})$  is  a  graded  sub-$R$-module  of  
   $\tilde    R_*( [2^S])$.

     \begin{definition}
     Let  $\mathcal{H}$  be  an  augmented  hypergraph  on  $S$.   Let  $s\in  S$.  
     \begin{enumerate}[(1)]
     \item
      We  say  that   $    R_*(\mathcal{H})$  is   {\it  $\partial/\partial  s$-invariant}   if  
 $\partial_i/\partial  s:    R_{n+1}(\mathcal{H})\longrightarrow    R_{n}(\mathcal{H})$  is  well-defined  for   any      $n\in \mathbb{N}$  and  any  
$0\leq   i\leq   n+1$;
\item  
 We  say  that      $     R_*(\mathcal{H})$  is   {\it  $d  s$-invariant}   if  
$d_i s:  R_n(\mathcal{H})\longrightarrow  R_{n+1}(\mathcal{H})$  is  well-defined  for   any   $n\in \mathbb{N}$  and  any  
$0\leq   i\leq   n+1$.    
 \end{enumerate}
 \end{definition}
 
   \begin{lemma}\label{le-6.1x}
     For    any   $s\in  S$   and    any  augmented  hypergraph  $\mathcal{H}$  on  $S$,  
\begin{enumerate}[(1)]
\item
$R_*(\mathcal{H})$  is   $\partial/\partial  s$-invariant  iff   for  any  $\sigma\in \mathcal{H}$  
such  that  $s\in \sigma$,    if  $\sigma\setminus \{s\}\neq \emptyset$  then  $(\sigma\setminus \{s\})\in \mathcal{H}$;  
\item
  $R_*(\mathcal{H})$  is   $d s$-invariant  iff    for  any  $\sigma\in \mathcal{H}$  
such  that  $s\notin \sigma$  and  $\sigma\neq\emptyset$,     it  holds    $(\sigma\cup \{s\})\in \mathcal{H}$.    
\end{enumerate}
     \end{lemma}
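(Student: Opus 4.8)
The plan is to reduce both equivalences to a computation on a single basis element, exploiting that each $\partial_i/\partial s$ and $d_i s$ is $R$-linear and that $R_*(\mathcal{H})$ is the free submodule of $\mathcal{F}\langle S\rangle_*$ spanned by the simplicial words representing the hyperedges of $\mathcal{H}$. First I would pin down what \emph{well-defined} means here: by (\ref{eq-wwwvvva1}) and (\ref{eq-wwwvvva2}) the operators $\partial_i/\partial s$ and $d_i s$ already live on all of $\mathcal{F}\langle S\rangle_*$, so the content of invariance is only that their restrictions carry the spanning hyperedge words of $R_{n+1}(\mathcal{H})$ (resp.\ $R_n(\mathcal{H})$) into the target submodule $R_n(\mathcal{H})$ (resp.\ $R_{n+1}(\mathcal{H})$). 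Hence it suffices to evaluate on one hyperedge word $\sigma$ at a time.

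The key observation I would record next is the following. For a simplicial word $\sigma=s_0\ldots s_m$ with $s_0\prec\cdots\prec s_m$ representing a hyperedge of $\mathcal{H}$, formula (\ref{eq-2.4ax}) gives $\frac{\partial_i}{\partial s}(\sigma)=(-1)^i\delta(s,s_i)(\sigma\setminus\{s\})$, which vanishes unless $s$ is the $i$-th vertex of $\sigma$, in which case it is $(-1)^i$ times the single simplicial word $\sigma\setminus\{s\}$. Dually, by (\ref{eq-ada2b}), $d_i s(\sigma)$ vanishes unless $s_{i-1}\prec s\prec s_i$ (so in particular $s\notin\sigma$), in which case it equals $(-1)^i(\sigma\cup\{s\})$, and for $s\notin\sigma$ there is exactly one such insertion position $i$ while all other $d_j s(\sigma)=0$. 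Since $(-1)^i$ is a unit in $R$ and $R_*(\mathcal{H})$ is free on hyperedge words, a nonzero scalar multiple of a simplicial word $\tau$ lies in $R_*(\mathcal{H})$ \emph{iff} $\tau$ is a hyperedge of $\mathcal{H}$. This turns the module-theoretic condition into a purely set-theoretic one.

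Granting this, part (1) follows immediately in both directions. The restriction of $\partial_i/\partial s$ to $R_{n+1}(\mathcal{H})$ lands in $R_n(\mathcal{H})$ for all $i$ iff for every $(n+1)$-hyperedge $\sigma$ containing $s$ the word $\sigma\setminus\{s\}$ is an $n$-hyperedge of $\mathcal{H}$; ranging over $n\in\mathbb{N}$ this says exactly that for every $\sigma\in\mathcal{H}$ with $s\in\sigma$ and $|\sigma|\geq 2$ we have $\sigma\setminus\{s\}\in\mathcal{H}$, and $|\sigma|\geq 2$ is equivalent to $\sigma\setminus\{s\}\neq\emptyset$. Necessity comes from evaluating on such a $\sigma$, sufficiency from checking the restriction on each generator. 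Part (2) is symmetric: the restriction of $d_i s$ to $R_n(\mathcal{H})$ lands in $R_{n+1}(\mathcal{H})$ for all $i$ iff for every $n$-hyperedge $\sigma$ with $s\notin\sigma$ the word $\sigma\cup\{s\}$ is an $(n+1)$-hyperedge of $\mathcal{H}$ (using the uniqueness of the insertion slot and the vanishing of $d_i s$ when $s\in\sigma$); ranging over $n\geq 0$ forces $\sigma\neq\emptyset$, giving precisely the stated condition.

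The only real subtlety, and the step I would treat most carefully, is the boundary bookkeeping: one must match the quantifier ``$n\in\mathbb{N}$'' in the definition of invariance to the side conditions ``$\sigma\setminus\{s\}\neq\emptyset$'' in (1) and ``$\sigma\neq\emptyset$'' in (2), and confirm that the excluded cases contribute nothing. Concretely, $\sigma=\{s\}$ in (1) is irrelevant because it lives in $R_0(\mathcal{H})$, which is the source of $\partial_i/\partial s$ only for $n=-1$, outside the asserted range (its lone nonzero derivative would land in $R_{-1}$); and $\sigma=\emptyset$ in (2) is irrelevant because $d_i s$ on $R_{-1}(\mathcal{H})$ is likewise outside the range $n\geq 0$. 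Once these edge cases are cleared, the equivalences are exactly the two computations above.
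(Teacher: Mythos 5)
Your proposal is correct and follows essentially the same route as the paper's proof, which simply unpacks the well-definedness of $\partial_i/\partial s$ and $d_i s$ on the generating hyperedge words and rewrites the resulting condition set-theoretically; your version just makes explicit the reduction to basis elements, the unit factor $(-1)^i$, and the edge cases $\sigma\setminus\{s\}=\emptyset$ and $\sigma=\emptyset$ that the quantifier $n\in\mathbb{N}$ excludes.
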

     
     \begin{proof}
   (1)   $R_*(\mathcal{H})$  is   $\partial/\partial  s$-invariant  
    iff 
     for  \ any  $n\in \mathbb{N}$,  any   $0\leq  i\leq   n+1$
  and   any   $ \{s_0,s_1,\ldots, s_{n+1}\}\in  \mathcal{H}$  such  that  $s_i=s$,  
it  holds   $  \{s_0, \ldots, \widehat{s_i}, \ldots, s_{n+1}\}\in  \mathcal{H}$.   
  Write  $\sigma=  \{s_0,s_1,\ldots, s_{n+1}\}$.  We  obtain  (1).

  (2)  $R_*(\mathcal{H})$  is   $d  s$-invariant  
    iff   for  any      $n\in \mathbb{N}$,   any     $0\leq  i\leq   n+1$
     and    any   $ \{s_0,s_1,\ldots, s_n\}\in  \mathcal{H}$  such  that  $s_{i-1}\prec  s\prec  s_i$,  
 we  have    $  \{s_0, \ldots,  s_{i-1},  s,  s_i,  \ldots, s_n\}  \in  \mathcal{H}$.  
    Write  $\sigma=  \{s_0,s_1,\ldots, s_n\}$.  We  obtain  (2).  
     \end{proof}

 \begin{lemma}\label{le-6.2bva}
For  any  augmented   hypergraph  $\mathcal{H}$  on  $S$,  
\begin{enumerate}[(1)]
\item
$\mathcal{H}$  is  an  augmented   simplicial  complex     iff  $R_*(\mathcal{H})$  is   $\partial/\partial  s$-invariant for   any   $s\in   S$;   
\item
$\mathcal{H}$  is  an  augmented  independence  hypergraph      iff  $R_*(\mathcal{H})$  is   $d  s$-invariant for   any   $s\in   S$.    
\end{enumerate}
 \end{lemma}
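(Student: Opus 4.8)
The plan is to read off both equivalences from the combinatorial reformulation of invariance already obtained in Lemma~\ref{le-6.1x}, so that the only remaining work is to upgrade a single-vertex closure condition to the all-subsets (resp.\ all-supersets) closure condition appearing in the definition of an augmented simplicial complex (resp.\ augmented independence hypergraph). In each part the forward implication is immediate, and the converse is an induction that removes (resp.\ adds) one vertex at a time while staying inside the range in which Lemma~\ref{le-6.1x} applies.

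For (1), if $\mathcal{H}$ is an augmented simplicial complex and $s\in S$, then for any $\sigma\in\mathcal{H}$ with $s\in\sigma$ and $\sigma\setminus\{s\}\neq\emptyset$ the set $\sigma\setminus\{s\}$ is a non-empty subset of $\sigma$ and hence a hyperedge; by Lemma~\ref{le-6.1x}(1) this is precisely $\partial/\partial s$-invariance, and $s$ was arbitrary. Conversely, assuming $\partial/\partial s$-invariance for every $s$, I would fix $\sigma\in\mathcal{H}$ together with a non-empty subset $\tau\subseteq\sigma$ and induct on $|\sigma\setminus\tau|$: choosing $s\in\sigma\setminus\tau$, the set $\sigma\setminus\{s\}$ still contains $\tau$, so it is non-empty and lies in $\mathcal{H}$ by Lemma~\ref{le-6.1x}(1), and the induction hypothesis applied to the pair $(\sigma\setminus\{s\},\tau)$ yields $\tau\in\mathcal{H}$. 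Thus every non-empty subset of a hyperedge is a hyperedge, i.e.\ $\mathcal{H}$ is an augmented simplicial complex.

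Part (2) is the dual argument, now invoking Lemma~\ref{le-6.1x}(2) and stacking vertices: the forward direction sends $\sigma\cup\{s\}$, a finite superset of a non-empty $\sigma\in\mathcal{H}$, back into $\mathcal{H}$, which is $ds$-invariance; the converse shows by induction on $|\tau\setminus\sigma|$ that every finite superset $\tau\subseteq S$ of a non-empty $\sigma\in\mathcal{H}$ is a hyperedge, each intermediate set $\sigma\cup\{s\}$ remaining non-empty so that Lemma~\ref{le-6.1x}(2) applies at every step. The one place that requires genuine care --- and which I expect to be the main obstacle --- is the empty hyperedge in part (2): the stacking argument started from $ds$-invariance only produces supersets of \emph{non-empty} hyperedges, so matching it to the defining all-supersets condition requires treating the boundary case $\emptyset\in\mathcal{H}$ separately. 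This is the only point at which the asymmetry between the subset closure used in (1) and the superset closure used in (2) is felt, since in (1) the simplicial-complex condition already excludes $\emptyset$ as a subset and therefore aligns exactly with the ``$\sigma\setminus\{s\}\neq\emptyset$'' proviso in Lemma~\ref{le-6.1x}(1); once the empty-hyperedge case is isolated, both equivalences follow at once from the inductive peeling and stacking arguments above.
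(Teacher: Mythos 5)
Your route is the same as the paper's: both directions of each part are read off from Lemma~\ref{le-6.1x}, and the only genuine content is the passage from single-vertex closure to closure under all non-empty subsets (resp.\ all finite supersets). The paper simply asserts that equivalence in one line; you supply the inductions on $|\sigma\setminus\tau|$ and $|\tau\setminus\sigma|$ that justify it, and your part (1) is complete and correct as written.

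The issue you flag in part (2) is, however, not a boundary case that can be ``treated separately'' and then dispatched: the stated equivalence actually fails when $\emptyset\in\mathcal{H}$. Take $S=\{a,b\}$ and $\mathcal{H}=\{\emptyset\}$. Since $d s$-invariance is only required for $n\in\mathbb{N}$, it constrains only non-empty hyperedges (equivalently, the condition in Lemma~\ref{le-6.1x}(2) is vacuous here), so $R_*(\mathcal{H})$ is $d s$-invariant for every $s\in S$; yet $\mathcal{H}$ is not an augmented independence hypergraph, because $\{a\}\supseteq\emptyset$ is not a hyperedge. Your stacking argument therefore proves exactly what is provable --- the equivalence for hypergraphs with $\emptyset\notin\mathcal{H}$, or with the superset condition imposed only on non-empty hyperedges --- and the concluding claim that ``once the empty-hyperedge case is isolated, both equivalences follow'' is over-optimistic, since no separate argument can cover that case. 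The paper's own proof quietly makes the same restriction (it argues about ``independence hypergraphs'' rather than augmented ones), and the Remark following Theorem~\ref{pr-5.5aaa} tacitly concedes that the $\emptyset$ case misbehaves; the honest fix is to add the hypothesis $\emptyset\notin\mathcal{H}$ to part (2), not to search for an argument in the excluded case.
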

 
 \begin{proof}
 (1)   $\mathcal{H}$  is  a  simplicial  complex   iff
    for  any  $\sigma\in \mathcal{H}$  
 and  any   $s\in \sigma$,    if  $\sigma\setminus \{s\}\neq \emptyset$  then  $(\sigma\setminus \{s\})\in \mathcal{H}$.  
 By  Lemma~\ref{le-6.1x}~(1),   this  holds  iff   $R_*(\mathcal{H})$  is   $\partial/\partial  s$-invariant for   any  
 $s\in  \bigcup_{\sigma\in \mathcal{H},  \dim\sigma\geq  1}  \sigma$,  
   iff    $R_*(\mathcal{H})$  is   $\partial/\partial  s$-invariant for   any  
  $s\in   S$.

 (2)    $\mathcal{H}$  is  an   independence  hypergraph   iff   
  for  any    $s\in  S$  and   any  $\sigma\in \mathcal{H}$  
such  that  $s\notin \sigma$   and   $\sigma\neq\emptyset$,     it  holds    $(\sigma\cup \{s\})\in \mathcal{H}$.   
 By  Lemma~\ref{le-6.1x}~(2),   this  holds  iff   $R_*(\mathcal{H})$  is   $d  s$-invariant for   any
 $s\in  \bigcup_{\sigma\in \mathcal{H}, \sigma\neq\emptyset}\Gamma_S(\sigma)$,  
 iff  
  $R_*(\mathcal{H})$  is   $d  s$-invariant for   any  
    $s\in   S$. 
 \end{proof}

 Let  $S( \mathcal{H},\partial)$   be   the  subset  of  $S$  consisting  of  all  $s\in  S$  such  that  
    $ R_*(\mathcal{H})$  is   $\partial/\partial  s$-invariant.     Let  $S(\mathcal{H},d)$   be   the  subset  of  $S$  consisting  of  all  $s\in  S$  such  that  
    $ R_*(\mathcal{H})$  is   $d  s$-invariant.   
    
    \begin{theorem}\label{pr-5.5aaa}
\begin{enumerate}[(1)]
\item
    For  any  augmented  hypergraph  $\mathcal{H}$  on  $S$,   $\mathcal{H}\mid _{S( \mathcal{H},\partial)}$  is  an  augmented    simplicial  complex  on   $S( \mathcal{H},\partial)$.  Moreover,   for  any   augmented   simplicial  complex   $\mathcal{K}$    on  $S$,   we  have  $S( \mathcal{K},\partial)=S$;    
\item
    For  any   hypergraph  $\mathcal{H}$  on  $S$,   $\mathcal{H}\mid_{S(\mathcal{H},d)}$  is  an     independence  hypergraph on  $S(\mathcal{H},d)$.   Moreover,     for  any      independence  hypergraph
$\mathcal{L}$   on  $S$,  we  have   $S(\mathcal{L},d)=S$.
  \end{enumerate}
    \end{theorem}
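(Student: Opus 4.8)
The plan is to read off both closure properties from the vertex-wise characterizations in Lemma~\ref{le-6.1x}, and to dispatch the two ``moreover'' clauses immediately from Lemma~\ref{le-6.2bva}.

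For the first half of (1) I would set $T=S(\mathcal{H},\partial)$ and verify directly that $\mathcal{H}\mid_T$ satisfies the defining downward-closure of an augmented simplicial complex on $T$. So let $\tau\in\mathcal{H}\mid_T$ be nonempty and let $\eta$ be a nonempty subset of $\tau$; I must exhibit $\sigma'\in\mathcal{H}$ with $\sigma'\cap T=\eta$. Pick $\sigma\in\mathcal{H}$ with $\sigma\cap T=\tau$. The vertices to be deleted form the set $\tau\setminus\eta\subseteq T$, and each such vertex $s$ is $\partial/\partial s$-invariant for $\mathcal{H}$. I would delete them one at a time, applying Lemma~\ref{le-6.1x}(1) at each step; crucially, the side condition $\sigma\setminus\{s\}\neq\emptyset$ of that lemma holds automatically because every intermediate hyperedge still contains the nonempty set $\eta$. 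Iterating yields $\sigma\setminus(\tau\setminus\eta)\in\mathcal{H}$, and since $\tau\setminus\eta\subseteq T$ this hyperedge meets $T$ in exactly $\eta$, so $\eta\in\mathcal{H}\mid_T$.

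Part (2) is the order-dual. With $T'=S(\mathcal{H},d)$, take $\tau\in\mathcal{H}\mid_{T'}$ and $\eta$ with $\tau\subseteq\eta\subseteq T'$; choose $\sigma\in\mathcal{H}$ with $\sigma\cap T'=\tau$. Now I would adjoin the vertices of $\eta\setminus\tau\subseteq T'$ one at a time via Lemma~\ref{le-6.1x}(2). Each such vertex $s$ is $ds$-invariant, and since $s\in T'$ but $s\notin\tau=\sigma\cap T'$ it does not already lie in $\sigma$ (nor in any set obtained by adjoining the other, distinct, vertices of $\eta\setminus\tau$); the remaining hypothesis $\sigma\neq\emptyset$ holds because $\tau\neq\emptyset$. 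The result $\sigma\cup(\eta\setminus\tau)\in\mathcal{H}$ meets $T'$ in $\eta$, giving $\eta\in\mathcal{H}\mid_{T'}$. The standing assumption that $\mathcal{H}$ is a \emph{hypergraph} rather than an augmented one is what keeps $\emptyset$ out of $\mathcal{H}\mid_{T'}$, which is necessary since a superset-closed family containing $\emptyset$ would be forced to equal all of $[2^{T'}]$; this is precisely why the two halves are phrased asymmetrically (augmented simplicial complex versus independence hypergraph).

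Finally, the ``moreover'' assertions are direct: if $\mathcal{K}$ is an augmented simplicial complex then Lemma~\ref{le-6.2bva}(1) gives $\partial/\partial s$-invariance for every $s\in S$, so $S(\mathcal{K},\partial)=S$, and dually Lemma~\ref{le-6.2bva}(2) yields $S(\mathcal{L},d)=S$ for any independence hypergraph $\mathcal{L}$. I expect the only genuine care needed is the empty-set bookkeeping highlighted above; the inductive deletion/adjunction steps themselves are routine once the side conditions of Lemma~\ref{le-6.1x} are seen to hold throughout.
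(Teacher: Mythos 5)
Your proposal is correct and follows essentially the same route as the paper: both arguments come down to the observation that, for a vertex $s$ in the invariant set, the single-vertex deletion (resp.\ adjunction) guaranteed by Lemma~\ref{le-6.1x} commutes with taking the trace. The paper packages the passage from one-vertex closure to full closure inside Lemma~\ref{le-6.2bva} by verifying $\partial/\partial s$- (resp.\ $d s$-) invariance of the trace itself, whereas you iterate the one-vertex steps explicitly; your empty-set bookkeeping, including why part (2) is stated for hypergraphs rather than augmented ones, matches the paper's remark following the theorem.
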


\begin{proof}
(1) 
Let $\mathcal{H}$  be  an  augmented hypergraph  on  $S$.  
  Let   $s\in   S( \mathcal{H},\partial)$.   For  any  $n\in \mathbb{N}$  and  any  $0\leq  i\leq  n$,  
  since   $R_n(\mathcal{H}\mid _{S( \mathcal{H},\partial)})$  is  a  sub-$R$-module  of  
  $R_n(\mathcal{H} )$,  we  have  an   $R$-linear  map   
  \begin{eqnarray*}
  \frac{\partial_i}{\partial  s}: &&  R_n(\mathcal{H}\mid _{S( \mathcal{H},\partial)})\longrightarrow  R_{n-1}(\mathcal{H}).
  \end{eqnarray*}   
  For  any  $\sigma\in  \mathcal{H}\mid _{S( \mathcal{H},\partial)}$  such  that  $s\in  \sigma$,  
  write  $\sigma=\tau\cap  S( \mathcal{H},\partial)$  for  some  $\tau\in  \mathcal{H}$.  
 Then  
 \begin{eqnarray*}
   \frac{\partial_i}{\partial  s}  (\sigma)  =  \pm   \Big(\frac{\partial_i}{\partial  s}  (\tau)\cap  S( \mathcal{H},\partial)\Big)  \in  R_*(\mathcal{H}\mid _{S( \mathcal{H},\partial)}). 
 \end{eqnarray*}
Therefore, 
 \begin{eqnarray*}
  \frac{\partial_i}{\partial  s}\Big(  R_n(\mathcal{H}\mid _{S( \mathcal{H},\partial)})\Big)  \subseteq   R_{n-1}(\mathcal{H}\mid _{S( \mathcal{H},\partial)}).
  \end{eqnarray*}   
  By  Lemma~\ref{le-6.2bva}~(1),   we   obtain  (1).  

  (2)  
  Let  $\mathcal{H}$  be  a  hypergraph  on  $S$.  
  Let   $s\in   S( \mathcal{H}, d)$.   For  any  $n\in \mathbb{N}$  and  any  $0\leq  i\leq  n+1$,  
  since   $R_n(\mathcal{H}\mid _{S( \mathcal{H},d)})$  is  a  sub-$R$-module  of  
  $R_n(\mathcal{H} )$,  we  have  an   $R$-linear  map   
  \begin{eqnarray*}
 d_is: &&  R_n(\mathcal{H}\mid _{S( \mathcal{H},d)})\longrightarrow  R_{n+1}(\mathcal{H}).
  \end{eqnarray*}   
  For  any  $\sigma\in  \mathcal{H}\mid _{S( \mathcal{H},d)}$  such  that  $s\notin  \sigma$,  
  write  $\sigma=\tau\cap  S( \mathcal{H},d)$  for  some  $\tau\in  \mathcal{H}$. 
  If   $s\in \tau$,    then  $s\in  \tau\cap  S( \mathcal{H},d) =\sigma$,  which  contradicts with 
  our assumption.  Thus    $s\notin \tau$.      It  follows  that  
 \begin{eqnarray*}
   d_i s  (\sigma)  =  \pm    ( d_i s  (\tau)\cap  S( \mathcal{H},d) )  \in  R_*(\mathcal{H}\mid _{S( \mathcal{H},d)}). 
 \end{eqnarray*}
Therefore, 
 \begin{eqnarray*}
  d_i   s  (  R_n(\mathcal{H}\mid _{S( \mathcal{H},d)}) )  \subseteq   R_{n+1}(\mathcal{H}\mid _{S( \mathcal{H},d)}).
  \end{eqnarray*}   
    By  Lemma~\ref{le-6.2bva}~(2),    we   obtain   (2).  
      \end{proof}
      
      \begin{remark}
      In  Theorem~\ref{pr-5.5aaa}~(1),   $\emptyset\in \mathcal{H}$  iff  $\emptyset\in   \mathcal{H}\mid_{S(\mathcal{H},\partial)}$.   Note  that  if  $\emptyset\in \mathcal{H}$,  then  
      $\mathcal{H}\mid_{S(\mathcal{H},d)}$  may  not  be  an  augmented  independence  hypergraph.  
      \end{remark}
    
     \begin{corollary}
    \begin{enumerate}[(1)]
    \item
     Let  $\mathcal{H}$  and  $\mathcal{H}'$  be   augmented  hypergraphs  on  $S$  and  $S'$  
     respectively.  
    For  any   surjective   morphism      $\varphi:  \mathcal{H}\longrightarrow  \mathcal{H}'$,   
    we  have  a  simplicial  map  
    \begin{eqnarray}\label{eq-mmmc1}
    \varphi\mid _{S(\mathcal{H},\partial)}:   ~~~\mathcal{H}\mid _{S(\mathcal{H},\partial)}
    \longrightarrow   \mathcal{H}'\mid  _{S'(\mathcal{H}',\partial)}.  
    \end{eqnarray}
    \item
     Let  $\mathcal{H}$  and  $\mathcal{H}'$  be     hypergraphs  on  $S$  and  $S'$  
     respectively.  
    For  any   surjective   morphism      $\varphi:  \mathcal{H}\longrightarrow  \mathcal{H}'$,    if    $\varphi_0:  S\longrightarrow  S'$  is  bijective,  then   
 we  have  a  morphism  of  independence  hypergraphs 
   \begin{eqnarray}\label{eq-mmmc2}
    \varphi\mid _{S(\mathcal{H},d)}:   ~~~\mathcal{H}\mid _{S(\mathcal{H},d)}
    \longrightarrow   \mathcal{H}'\mid  _{S'(\mathcal{H}',d)}.  
    \end{eqnarray}
    \end{enumerate}
    \end{corollary}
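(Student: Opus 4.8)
The plan is to deduce both parts from Theorem~\ref{pr-5.5aaa} together with the trace construction $\varphi\mid_T$ recalled just before Corollary~\ref{le-99mb1}, the only substantive point being that the vertex map $\varphi_0$ respects the invariant subsets of vertices. By Theorem~\ref{pr-5.5aaa}(1) both traces $\mathcal{H}\mid_{S(\mathcal{H},\partial)}$ and $\mathcal{H}'\mid_{S'(\mathcal{H}',\partial)}$ are augmented simplicial complexes, and by Theorem~\ref{pr-5.5aaa}(2) both $\mathcal{H}\mid_{S(\mathcal{H},d)}$ and $\mathcal{H}'\mid_{S'(\mathcal{H}',d)}$ are independence hypergraphs. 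Since a morphism between augmented simplicial complexes is by definition a simplicial map, and a morphism between independence hypergraphs is a morphism of independence hypergraphs, it suffices to produce the underlying morphisms of augmented hypergraphs with the stated sources and targets.

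For (1) I would first establish the containment $\varphi_0\big(S(\mathcal{H},\partial)\big)\subseteq S'(\mathcal{H}',\partial)$. Fixing $s\in S(\mathcal{H},\partial)$ and setting $s'=\varphi_0(s)$, the criterion of Lemma~\ref{le-6.1x}(1) reduces this to checking that every $\sigma'\in\mathcal{H}'$ containing $s'$ with $\sigma'\setminus\{s'\}\neq\emptyset$ satisfies $\sigma'\setminus\{s'\}\in\mathcal{H}'$. Surjectivity of $\varphi$ furnishes a lift $\sigma\in\mathcal{H}$ with $\varphi(\sigma)=\sigma'$, and the $\partial/\partial s$-invariance of $R_*(\mathcal{H})$ permits deleting $s$ while remaining in $\mathcal{H}$, so that pushing the result forward recovers $\sigma'\setminus\{s'\}$. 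Granting the containment, the trace morphism $\varphi\mid_{S(\mathcal{H},\partial)}$ already lands in $\mathcal{H}'\mid_{S'(\mathcal{H}',\partial)}$, and being a morphism of augmented simplicial complexes it is the desired simplicial map~(\ref{eq-mmmc1}).

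For (2) I would run the dual template, replacing the statements~(1) of Lemma~\ref{le-6.1x} and Theorem~\ref{pr-5.5aaa} by their $ds$-counterparts~(2). Here the goal is $\varphi_0\big(S(\mathcal{H},d)\big)\subseteq S'(\mathcal{H}',d)$: given $s\in S(\mathcal{H},d)$ and $\sigma'\in\mathcal{H}'$ with $s'=\varphi_0(s)\notin\sigma'$, the assumed bijectivity of $\varphi_0$ lets me pull $\sigma'$ back to $\varphi_0^{-1}(\sigma')\in\mathcal{H}$, enlarge it by $s$ using $ds$-invariance, and push forward to obtain $\sigma'\cup\{s'\}\in\mathcal{H}'$. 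Feeding this into the trace construction yields the morphism of independence hypergraphs~(\ref{eq-mmmc2}).

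The hard part will be the vertex-set containment in (1) when $\varphi_0$ is not injective. If the lift $\sigma$ meets the fibre $\varphi_0^{-1}(s')$ in points other than $s$, or fails to contain $s$ altogether, then deleting $s$ is neither possible nor enough to realise $\sigma'\setminus\{s'\}$ as a push-forward, since the surviving fibre points still cover $s'$. The delicate step is thus to choose the lift so that $s$ is the unique preimage of $s'$ inside $\sigma$ and $s\in\sigma$; this is automatic when $\varphi_0$ is injective, which is exactly why bijectivity is imposed in (2), where every fibre is a singleton and the pull-back and push-forward of supersets are unambiguous. Pinning down precisely what the surjectivity of $\varphi$ must be strengthened to in order to force this in (1) is the crux of the whole argument.
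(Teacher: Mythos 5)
Your strategy is the same as the paper's: show $\varphi_0(S(\mathcal{H},\partial))\subseteq S'(\mathcal{H}',\partial)$ (resp.\ $\varphi_0(S(\mathcal{H},d))\subseteq S'(\mathcal{H}',d)$) via the criterion of Lemma~\ref{le-6.1x}, invoke Theorem~\ref{pr-5.5aaa} to identify the two traces as an augmented simplicial complex and an independence hypergraph respectively, and then restrict $\varphi$ using the trace construction of Corollary~\ref{le-99mb1}. Your treatment of part (2) is complete and agrees with the paper: bijectivity of $\varphi_0$ makes the lift of $\sigma'$ unique and guarantees $s\notin\varphi_0^{-1}(\sigma')$, so $ds$-invariance transports correctly.

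The difficulty you flag in part (1) is a genuine one, and you are right not to wave it away: the containment $\varphi_0(S(\mathcal{H},\partial))\subseteq S'(\mathcal{H}',\partial)$ can fail for a surjective morphism with non-injective $\varphi_0$. Take $S=\{a,b,c\}$, $S'=\{x,y\}$, $\varphi_0(a)=\varphi_0(b)=x$, $\varphi_0(c)=y$, $\mathcal{H}=\{\{a\},\{b,c\}\}$ and $\mathcal{H}'=\varphi(\mathcal{H})=\{\{x\},\{x,y\}\}$. Then $a\in S(\mathcal{H},\partial)$ (the only hyperedge containing $a$ is a singleton), while $x=\varphi_0(a)\notin S'(\mathcal{H}',\partial)$ because $\{x,y\}\setminus\{x\}=\{y\}\notin\mathcal{H}'$; indeed $S(\mathcal{H},\partial)=\{a\}$ and $S'(\mathcal{H}',\partial)=\{y\}$, so the restriction of $\varphi_0$ does not even map the first vertex set into the second and the morphism (\ref{eq-mmmc1}) does not exist. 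The paper's own proof of (1) passes over precisely this point: the assertion that ``$R_*(\varphi(\mathcal{H}))$ is $\partial/\partial\varphi_0(s)$-invariant'' is stated without justification and is false in the example above, because the lift $\sigma$ of a hyperedge $\sigma'\ni\varphi_0(s)$ need not contain $s$, and even when it does, other points of the fibre of $\varphi_0(s)$ inside $\sigma$ survive the deletion of $s$. So the obstruction you isolated is a defect of the statement rather than of your argument; part (1) requires an additional hypothesis (for instance injectivity of $\varphi_0$, exactly as imposed in part (2)), under which your lifting argument closes without difficulty.
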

    
    \begin{proof}
    (1)  
    Let  $s\in  S(\mathcal{H},\partial)$.  
    Then    $R_*(\mathcal{H})$  is    $\partial/\partial  s$-invariant. 
    With  the  help of  Lemma~\ref{le-6.1x}~(1),     
     $R_*(\varphi(\mathcal{H}))$  is  $\partial/\partial   \varphi_0(s)$-invariant.  
    Since  $\varphi: \mathcal{H}\longrightarrow  \mathcal{H}'$  is  surjective, 
     $R_*( \mathcal{H}')$    is  $\partial/\partial   \varphi_0(s)$-invariant.  
     Thus  $\varphi_0(s)\in  S'(\mathcal{H}',\partial)$.
     It  follows  that  $\varphi_0( S(\mathcal{H},\partial))\subseteq S'(\mathcal{H}',\partial)$.  
     By  Theorem~\ref{pr-5.5aaa}~(1),  
     both  $\mathcal{H}\mid _{S(\mathcal{H},\partial)}$  and 
     $\mathcal{H}'\mid  _{S'(\mathcal{H}',\partial)}$
     are  augmented  simplicial  complexes.  
     With  the  help of  (\ref{eq-6.dd1}),  we  obtain  the  simplicial  map  
     (\ref{eq-mmmc1}).

     (2)  
     Suppose    $\varphi_0:  S\longrightarrow  S'$  is  bijective.  
      Let  $s\in  S(\mathcal{H},d)$.  
    Then    $R_*(\mathcal{H})$  is    $d  s$-invariant.
   With  the  help of  Lemma~\ref{le-6.1x}~(2),     
  $R_*(\varphi(\mathcal{H}))$  is  $d  \varphi_0(s)$-invariant.   
  Since  $\varphi: \mathcal{H}\longrightarrow  \mathcal{H}'$  is  surjective, 
     $R_*( \mathcal{H}')$    is  $d \varphi_0(s)$-invariant.  
       Thus  $\varphi_0(s)\in  S'(\mathcal{H}',d)$.
     It  follows  that  $\varphi_0( S(\mathcal{H},d))\subseteq S'(\mathcal{H}',d)$.  
     By  Theorem~\ref{pr-5.5aaa}~(2),  
     both  $\mathcal{H}\mid _{S(\mathcal{H},d)}$  and 
     $\mathcal{H}'\mid  _{S'(\mathcal{H}',d)}$
     are   independence  hypergraphs.  
     With  the  help of  (\ref{eq-6.dd2}),  we  obtain  the  morphism   
     (\ref{eq-mmmc2}).  
    \end{proof}
    
    \begin{example}
    Let   $S=\{s_0,s_1,s_2 \}$ such  that  $s_0\prec  s_1\prec  s_2$.  
    \begin{enumerate}[(1)]
     \item
    Let   $\mathcal{H}   =\{\{s_1\},  \{s_2\},  \{s_0,  s_1\}, \{s_0,s_2\}\}$. 
    Then  
    \begin{eqnarray*}
  &  \dfrac{\partial}{\partial  s_0}(\{s_1\})= \dfrac{\partial}{\partial  s_0}(\{s_2\}) =\dfrac{\partial}{\partial  s_1}(\{s_1\})=\dfrac{\partial}{\partial  s_1}(\{s_2\})=\dfrac{\partial}{\partial  s_2}(\{s_1\})=\dfrac{\partial}{\partial  s_2}(\{s_2\})=0,\\
&  
  \dfrac{\partial}{\partial  s_1}(\{s_0,s_2\})=\dfrac{\partial}{\partial  s_2}(\{s_0,s_1\})= 0, ~~~
   \dfrac{\partial}{\partial  s_2}(\{s_0,s_2\})=\dfrac{\partial}{\partial  s_1}(\{s_0,s_1\})= -\{s_0\}, \\
  & \dfrac{\partial}{\partial  s_0}(\{s_0, s_1\})=  \{s_1\},  ~~~  \dfrac{\partial}{\partial  s_0}(\{s_0, s_2\})=  \{s_2\}.   
    \end{eqnarray*}
    It  follows  that   $S(\mathcal{H},\partial)=\{s_0\}$  and  consequently 
    $\mathcal{H}\mid_{S(\mathcal{H},\partial)}=  \{\{s_0\}\}$.   
  Moreover,  
  \begin{eqnarray*}
  &    ds_1(\{s_0,  s_1\})=ds_2 (\{s_0,  s_2\})=ds_0(\{s_0,  s_1\})=  ds_0(\{s_0,  s_2\})=0, \\
  & ds_1 (\{s_1\})= ds_2(\{s_2\}) =0,~~~ds_0 (\{s_1\}) = \{s_0,s_1\},~~~
  ds_0(\{s_2\})= \{s_0,s_2\},\\
   &   ds_1(\{s_2\})= \{s_1,s_2\},~~~  ds_2(\{s_1\})=  -\{s_1,s_2\}. 
  \end{eqnarray*}
It  follows  that   $S(\mathcal{H}, d)=\{s_0\}$  and  consequently 
    $\mathcal{H}\mid_{S(\mathcal{H},d)}=  \{\{s_0\}\}$.

    \item
    Let  $\mathcal{H}'=\{\{s_0\}, \{s_1\},  \{s_2\},   \{s_0,s_1\}, \{s_0,s_2\}, \{s_0,s_1,s_2\}\}$.   Then  
    \begin{eqnarray*}
      &  
         \dfrac{\partial}{\partial  s_1} (\{s_0,s_1 \})=\dfrac{\partial}{\partial  s_2} (\{s_0 ,s_2\})=-\{s_0\},   
         \\
    &
    \dfrac{\partial}{\partial  s_0} (\{s_0,s_1\})= \{s_1\}, 
   ~~~
    \dfrac{\partial}{\partial  s_0} (\{s_0,s_2\})= \{s_2\}, ~~~\dfrac{\partial}{\partial  s_0} (\{s_0,s_1,s_2\})=\{s_1,s_2\}, 
  \\
  & \dfrac{\partial}{\partial  s_1} (\{s_0,s_1 ,s_2\})=-\{s_0,s_2\}, 
   ~~~\dfrac{\partial}{\partial  s_2} (\{s_0,s_1, ,s_2\})=-\{s_0,s_1\}. 
    \end{eqnarray*}
    It  follows  that  $S(\mathcal{H}',\partial)=\{s_1,s_2\}$  and  consequently 
    \begin{eqnarray*}
    \mathcal{H}'\mid_{S(\mathcal{H}',\partial)}=  \{\{s_1\},  \{s_2\},  \{s_1,s_2\}\}.  
    \end{eqnarray*}
    Moreover,  
    \begin{eqnarray*}
&ds_0 (\{s_1\})= \{s_0,s_1\},  ~~~  ds_0(\{s_2\}) =  \{s_0,s_2\}, \\
&  ds_1(\{s_0\})= -\{s_0,s_1\},  ~~~  ds_1(\{s_2\}) =  \{s_1,s_2\}, 
~~~  ds_1(\{s_0,s_2\})  = - \{s_0,s_1,s_2\},\\
&  ds_2(\{s_0\})=  -\{s_0,s_2\}, ~~~  ds_2(\{s_1\})= -\{s_1,s_2\}, 
~~~  ds_2 (\{s_0,s_1\})  =  \{s_0,s_1,s_2\}. 
    \end{eqnarray*}
     It  follows  that  $S(\mathcal{H}',d)=\{s_0\}$  and  consequently 
   $
    \mathcal{H}'\mid_{S(\mathcal{H}', d)}= \{\{s_0\}\}
    $.  
    
   \item
   Let  $\varphi_0:  S\longrightarrow  S$  be  
   $\varphi_0(s_0)=s_0$,  
   $\varphi_0(s_1)=s_2$  
   and 
   $\varphi_0(s_2)=s_1$.  
    Then  $\varphi_0$  induces  a    morphism   
    $\varphi:  \mathcal{H}\longrightarrow \mathcal{H}$  
    as  well as  a  morphism  
    $\varphi':  \mathcal{H}'\longrightarrow \mathcal{H}'$.  
     With  the  help  of   (1), 
      the  induced  simplicial  map  
     $\varphi\mid_{S(\mathcal{H},\partial)}$ 
     as  well as  
     the   induced  morphism  of  independence  hypergraphs 
     $\varphi\mid_{S(\mathcal{H},d)}$  
      is  the  identity  map  on  $\{s_0\}$.  
      With  the  help  of  (2),  
      the  induced   simplicial  map   
      $\varphi'\mid_{S(\mathcal{H}',\partial)}$ 
      is  given  by  
      $\varphi'\mid_{\{s_1,s_2\}} (\{s_1\})= \{s_2\}$, 
      $\varphi'\mid_{\{s_1,s_2\}} (\{s_2\})= \{s_1\}$ 
      and 
      $\varphi'\mid_{\{s_1,s_2\}} (\{s_1,s_2\})= \{s_1, s_2\}$. 
          The   induced  morphism  of  independence  hypergraphs 
     $\varphi'\mid_{S(\mathcal{H}',d)}$  
      is  the  identity  map  on  $\{s_0\}$.  
      
    \item
    Consider  the  canonical  inclusion  $\varphi:  \mathcal{H}\longrightarrow \mathcal{H}'$
    where $\varphi_0$  is  the  identity  map  on  $S$.  
     There  is  no  induced  simplicial  map  from  $\mathcal{H}\mid_{S(\mathcal{H},\partial)}$  
     to    $\mathcal{H}'\mid_{S(\mathcal{H}',\partial)}$.   
    \end{enumerate}
    \end{example}

    \section{The  constrained  homology }\label{s---7}

    Let  $S$  be  a  set  with  a  total  order $\prec$.  
    Let  $\mathcal{K}$  be  an  augmented  simplicial  complex on  $S$.  
    Restricting  (\ref{eq-wwwvvva1})  to  
    $R_*(\mathcal{K})$, 
    we  obtain   an  $R$-linear  map
    \begin{eqnarray*}
     \overrightarrow{\frac{\partial}{\partial  s}}:  && R_n(\mathcal{K})\longrightarrow  \prod_{n+1}R_{n-1}(\mathcal{K}). 
    \end{eqnarray*}  
        Let $\mathcal{L}$  be  an  augmented  independence  hypergraph  on  $S$. 
     Restricting  (\ref{eq-wwwvvva2})  to  
    $R_*(\mathcal{L})$, 
    we  obtain   an  $R$-linear  map
    \begin{eqnarray*}
    \overrightarrow{ds}:  &&   R_n(\mathcal{L})\longrightarrow  \prod_{n+2}   R_{n+1}(\mathcal{L}). 
    \end{eqnarray*}  
 Let  $k\in  \mathbb{N}$.  
 Let  $\alpha\in   \wedge^{2k+1} ( {\partial}/{\partial  s}\mid  s\in  S  )$  
 and    $\omega\in  \wedge^{2k+1}  (  d s \mid  s\in  S  )$.  
    Let  $q\in \mathbb{N}$.  
      We  have  a  sub-chain  complex 
   \begin{eqnarray*}
   \cdots   \overset{\alpha_{p+2}}{\longrightarrow}  R_{(p+1)(2k+1)+q}(\mathcal{K})  \overset{\alpha_{p+1}}{\longrightarrow} R_{p ( 2k+1)+q}(\mathcal{K})  \overset{\alpha_p}{\longrightarrow}  R_{(p-1)(2k+1)+q}(\mathcal{K})  \overset{\alpha_{p-1}}{\longrightarrow}   \cdots 
   \end{eqnarray*}
   of   
    $(\tilde{\mathcal{F}}\langle  S\rangle_*, \alpha,q)$,   denoted  as 
    $(\tilde  R_*(\mathcal{K}), \alpha,q)$,   
        and 
    a  sub-chain  complex 
      \begin{eqnarray*}
  \cdots   \overset{\omega_{p-2}}{\longrightarrow}  R_{(p-1)(2k+1)+q}(\mathcal{L})    \overset{\omega_{p-1}}{\longrightarrow}R_{p( 2k+1)+q}(\mathcal{L})  \overset{\omega_p}{\longrightarrow} R_{(p+1)(2k+1)+q}(\mathcal{L})  \overset{\omega_{p+1}}{\longrightarrow}   \cdots
   \end{eqnarray*}
   of   
   $(\tilde { \mathcal{F}}\langle  S\rangle_*, \omega,q)$,  denoted  as  $(\tilde  R_*(\mathcal{L}), \omega,q)$.  
      Note  that  
      \begin{eqnarray*}
      R_{-1}(\mathcal{K})= 
      \begin{cases}
      R  &{\rm~if~}  \emptyset\in\mathcal{K},\\
      0  &{\rm~if~}  \emptyset\notin\mathcal{K},
      \end{cases}
      ~~~~~~
            R_{-1}(\mathcal{L})= 
      \begin{cases}
      R  &{\rm~if~}  \emptyset\in\mathcal{L},\\
      0  &{\rm~if~}  \emptyset\notin\mathcal{L}.
      \end{cases}
      \end{eqnarray*}
            Consider  the  $R$-modules 
 \begin{eqnarray*}
  C((\tilde  R_*(\mathcal{K}), \alpha,q),   (\tilde  R_*(\mathcal{K}), \alpha,q-2l)), ~~~
   C((\tilde  R_*(\mathcal{L}), \omega,q),   (\tilde  R_*(\mathcal{L}), \omega,q+2l)).   
   \end{eqnarray*} 
 Take  the  direct  sums   
 \begin{eqnarray*}
  C_-(\tilde  R_*(\mathcal{K}), \alpha,q)&=&\bigoplus_{l=0}^\infty   C((\tilde  R_*(\mathcal{K}), \alpha,q),   (\tilde  R_*(\mathcal{K}), \alpha,q-2l)),\\
  C_+(\tilde  R_*(\mathcal{L}), \omega,q)&=&\bigoplus_{l=0}^\infty   C((\tilde  R_*(\mathcal{L}), \omega,q),   (\tilde  R_*(\mathcal{L}), \omega,q+2l)).
   \end{eqnarray*}
Define  the  multiplication   on  $  C_-(\tilde  R_*(\mathcal{K}), \alpha,q)$  
as  well  as  on  $C_+(\tilde  R_*(\mathcal{L}), \omega,q)$
  as  the  $R$-linear  extensions  of  the  composition
 of  chain  maps.

 \begin{lemma}
 \label{pr-bmaq17-thkl}
There   is  a   homomorphism  from  $\wedge^{2*}  ({\partial}/{\partial  s}\mid  s\in  S  )$   to   $ C_-(\tilde  R_*(\mathcal{K}), \alpha,q)$  and   a  homomorphism  from  $\wedge^{2*}   ( d s  \mid  s\in  S  )$   to   $ C_+(\tilde  R_*(\mathcal{L}), \omega,q)$.  
 \end{lemma}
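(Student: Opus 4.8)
The plan is to follow the proof of Theorem~\ref{pr-bmaq17} essentially verbatim, the only genuinely new ingredient being the invariance supplied by Lemma~\ref{le-6.2bva}, which guarantees that all the operators involved restrict to the sub-complexes $\tilde R_*(\mathcal{K})$ and $\tilde R_*(\mathcal{L})$. Once this restriction is justified, both the graded $R$-module structure and the multiplicativity are transported from the ambient complexes $(\tilde{\mathcal{F}}\langle S\rangle_*,\alpha,q)$ and $(\tilde{\mathcal{F}}\langle S\rangle_*,\omega,q)$ without change.

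First I would treat the augmented simplicial complex $\mathcal{K}$. Since $\mathcal{K}$ is an augmented simplicial complex, Lemma~\ref{le-6.2bva}~(1) shows that $R_*(\mathcal{K})$ is $\partial/\partial s$-invariant for every $s\in S$; hence each generator $\partial/\partial s$, and therefore every element of $\wedge(\partial/\partial s\mid s\in S)$, maps $\tilde R_*(\mathcal{K})$ into itself. In particular, for every $\beta\in\wedge^{2l}(\partial/\partial s\mid s\in S)$ the chain map (\ref{eq-cm8d}) restricts to a chain map $\beta\colon(\tilde R_*(\mathcal{K}),\alpha,q)\to(\tilde R_*(\mathcal{K}),\alpha,q-2l)$. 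Sending $\beta$ to this restricted chain map then defines a homomorphism of graded $R$-modules from $\wedge^{2*}(\partial/\partial s\mid s\in S)$ to $C_-(\tilde R_*(\mathcal{K}),\alpha,q)$.

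It remains to verify multiplicativity. For $\beta_i\in\wedge^{2l_i}(\partial/\partial s\mid s\in S)$, $i=1,2$, the identity $\beta_1\wedge\beta_2=\beta_2\wedge\beta_1$ produces, exactly as in Theorem~\ref{pr-bmaq17}, a commutative square of chain maps among the complexes $(\tilde{\mathcal{F}}\langle S\rangle_*,\alpha,q-2j)$. Restricting each vertex of that square to $\tilde R_*(\mathcal{K})$---which is legitimate precisely because of the $\partial/\partial s$-invariance just recorded---keeps the square commutative, so the composition $\beta_1\circ\beta_2$ of the restricted chain maps coincides with the restricted chain map induced by $\beta_1\wedge\beta_2$. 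This is the required multiplicativity, and it completes the first assertion.

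The second half, for the augmented independence hypergraph $\mathcal{L}$, is entirely parallel: by Lemma~\ref{le-6.2bva}~(2) the module $R_*(\mathcal{L})$ is $ds$-invariant for every $s\in S$, so each $\mu\in\wedge^{2l}(ds\mid s\in S)$ carries $\tilde R_*(\mathcal{L})$ into itself and (\ref{eq-cm10d}) restricts to a chain map $\mu\colon(\tilde R_*(\mathcal{L}),\omega,q)\to(\tilde R_*(\mathcal{L}),\omega,q+2l)$; the assignment $\mu\mapsto\mu$ yields a graded $R$-module homomorphism $\wedge^{2*}(ds\mid s\in S)\to C_+(\tilde R_*(\mathcal{L}),\omega,q)$, and multiplicativity follows from the analogous commutative square, as in Theorem~\ref{pr-bmaq17} and Proposition~\ref{pr-bmaq17-co}. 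I do not expect a serious obstacle here; the only point that requires real attention is the invariance of the sub-modules under the operators, which is exactly where Lemma~\ref{le-6.2bva} is invoked and which distinguishes this statement from its free-algebra counterpart.
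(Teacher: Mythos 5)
Your proposal is correct and follows essentially the same route as the paper, which simply declares the proof to be an analogue of Theorem~\ref{pr-bmaq17}; your explicit appeal to Lemma~\ref{le-6.2bva} to justify that the operators restrict to $\tilde R_*(\mathcal{K})$ and $\tilde R_*(\mathcal{L})$ is exactly the point that makes the analogy work. No gaps.
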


\begin{proof}
 The proof  is  an  analogue  of  Theorem~\ref{pr-bmaq17}.   
\end{proof}

  The     constrained  homology  of  $\mathcal{K}$
   with  respect to  $\alpha$  and  $q$  is  (cf.  \cite[Definition~4.3]{camb2023})
   \begin{eqnarray*}
   H_n(\mathcal{K},\alpha,q)= {\rm  Ker}(\alpha_{n}) /  {\rm  Im } (\alpha_{n+1}),  ~~~  n\in \mathbb{N}
   \end{eqnarray*}
   and  the   constrained  cohomology  of  $\mathcal{L}$
   with  respect to  $\omega$  and  $q$  is  (cf.  \cite[Definition~4.4]{camb2023})
   \begin{eqnarray*}
   H^n(\mathcal{L},\omega,q)= {\rm  Ker}(\omega_{n}) /  {\rm  Im } (\omega_{n-1}),  ~~~  n\in \mathbb{N}.  
   \end{eqnarray*}
       Given  two  graded  $R$-modules  $G$  and  $H$,   let  ${\rm  Hom}(G,H)$  
 be  the  $R$-module  consisting  of  all  homomorphisms  from  $G$  to  $H$.      Consider  the  $R$-modules 
 \begin{eqnarray*}
  {\rm  Hom}((H_*(\mathcal{K}, \alpha,q),   H_*(\mathcal{K}, \alpha,q-2l)), ~~~
    {\rm  Hom}(H^*(\mathcal{L}, \omega,q),   H^*(\mathcal{L}, \omega,q+2l)).   
   \end{eqnarray*} 
 Take  the  direct  sums   
 \begin{eqnarray*}
  {\rm  Hom}_-(H_*(\mathcal{K}, \alpha,q))&=&\bigoplus_{l=0}^\infty    {\rm  Hom}((H_*(\mathcal{K}, \alpha,q),   H_*(\mathcal{K}, \alpha,q-2l)),\\
   {\rm  Hom}_+(H^*(\mathcal{L}, \omega,q))&=&\bigoplus_{l=0}^\infty   {\rm  Hom}(H^*(\mathcal{L}, \omega,q),   H^*(\mathcal{L}, \omega,q+2l)).
   \end{eqnarray*}
   Similar with  (\ref{eq-mmvdaq1}),   define  the  multiplication   $\varphi_1\circ\varphi_2$   for  any   
\begin{eqnarray*}
&\varphi_1\in  {\rm  Hom}((H_*(\mathcal{K}, \alpha,q-2l_2),   H_*(\mathcal{K}, \alpha,q-2(l_1+l_2))),\\
&\varphi_2\in  {\rm  Hom}((H_*(\mathcal{K}, \alpha,q),   H_*(\mathcal{K}, \alpha,q-2l_2)).  
\end{eqnarray*}  
Similar  with  (\ref{eq-mmvdaq2}),  define  the  multiplication  $\psi_1\circ\psi_2$   
for    any   
\begin{eqnarray*}
&\psi_1\in  {\rm  Hom}(H^*(\mathcal{L}, \omega,q+2l_2),   H^*(\mathcal{L}, \omega,q+2(l_1+l_2))),\\
&\psi_2\in  {\rm  Hom}(H^*(\mathcal{L}, \omega,q),   H^*(\mathcal{L}, \omega,q+2l_2)).  
\end{eqnarray*}


    \begin{theorem}
 \label{th-1daza}
There   is  a   homomorphism  from  $\wedge^{2*}  ({\partial}/{\partial  s}\mid  s\in  S  )$   to   ${\rm  Hom}_-(H_*(\mathcal{K}, \alpha,q))$  as  well as     a  homomorphism  from  $\wedge^{2*}   ( d s  \mid  s\in  S  )$   to   $  {\rm  Hom}_+(H^*(\mathcal{L}, \omega,q))$.  
 \end{theorem}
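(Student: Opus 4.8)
The plan is to obtain both homomorphisms by composing the chain-level homomorphisms of Lemma~\ref{pr-bmaq17-thkl} with the functor that sends a chain map to its induced map on (co)homology. By Lemma~\ref{pr-bmaq17-thkl} we already have a homomorphism
\[
\Phi_-:\quad \wedge^{2*}\Big(\frac{\partial}{\partial s}\mid s\in S\Big)\longrightarrow C_-(\tilde R_*(\mathcal{K}),\alpha,q)
\]
sending $\beta\in\wedge^{2l}(\partial/\partial s\mid s\in S)$ to the chain map $\beta:(\tilde R_*(\mathcal{K}),\alpha,q)\to(\tilde R_*(\mathcal{K}),\alpha,q-2l)$ of (\ref{eq-cm8d}), and likewise a homomorphism $\Phi_+$ into $C_+(\tilde R_*(\mathcal{L}),\omega,q)$ sending $\mu$ to the chain map $\mu$ of (\ref{eq-cm10d}). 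It therefore suffices to show that passing to homology is itself a homomorphism of algebras and to compose.

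First I would record, for each fixed $l$, that taking homology gives an $R$-linear map
\[
C\big((\tilde R_*(\mathcal{K}),\alpha,q),(\tilde R_*(\mathcal{K}),\alpha,q-2l)\big)\longrightarrow {\rm Hom}\big(H_*(\mathcal{K},\alpha,q),H_*(\mathcal{K},\alpha,q-2l)\big)
\]
sending a chain map $f$ to the induced map $f_*$ on the quotients ${\rm Ker}(\alpha_n)/{\rm Im}(\alpha_{n+1})$. This is well-defined because a chain map carries cycles to cycles and boundaries to boundaries, and it is $R$-linear because $(f+g)_*=f_*+g_*$ and $(rf)_*=rf_*$. Summing over $l\geq 0$ yields an $R$-linear map of graded $R$-modules $H_*:C_-(\tilde R_*(\mathcal{K}),\alpha,q)\to{\rm Hom}_-(H_*(\mathcal{K},\alpha,q))$.

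Next I would check that $H_*$ preserves the composition multiplication, that is, $(\varphi_1\circ\varphi_2)_*=(\varphi_1)_*\circ(\varphi_2)_*$. This is exactly the functoriality of homology: the map induced on homology by a composite of chain maps is the composite of the induced maps, and the grading bookkeeping matches because $\varphi_1\circ\varphi_2$ shifts $q$ by $-2(l_1+l_2)$ just as $(\varphi_1)_*\circ(\varphi_2)_*$ does under the multiplication defined after (\ref{eq-mmvdaq1}). Composing with $\Phi_-$ then gives the desired homomorphism $H_*\circ\Phi_-:\wedge^{2*}(\partial/\partial s\mid s\in S)\to{\rm Hom}_-(H_*(\mathcal{K},\alpha,q))$. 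The cohomology statement is obtained identically, replacing $\Phi_-$ by $\Phi_+$, the complex $(\tilde R_*(\mathcal{K}),\alpha,q)$ by $(\tilde R_*(\mathcal{L}),\omega,q)$, the quotients by ${\rm Ker}(\omega_n)/{\rm Im}(\omega_{n-1})$, and the shift $q\mapsto q-2l$ by $q\mapsto q+2l$.

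Since the argument is pure functoriality built on top of Lemma~\ref{pr-bmaq17-thkl}, there is no serious obstacle. The only point requiring care is the compatibility of the two multiplications (composition of chain maps on one side, composition of induced homology maps on the other) together with the additive bookkeeping of the shift parameters $l$, which is precisely what the definitions of the multiplications on ${\rm Hom}_-(H_*(\mathcal{K},\alpha,q))$ and ${\rm Hom}_+(H^*(\mathcal{L},\omega,q))$ preceding the statement were arranged to track.
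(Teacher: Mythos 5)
Your proposal is correct and follows essentially the same route as the paper: the paper's proof also invokes Lemma~\ref{pr-bmaq17-thkl} for the chain-level homomorphism and then observes that a chain map induces a homomorphism on the constrained (co)homology and that composition of chain maps induces composition of the induced homomorphisms. You merely spell out the $R$-linearity and the bookkeeping of the shift parameters in more detail than the paper does.
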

 
 \begin{proof}
A  chain  map  from  $(\tilde  R_*(\mathcal{K}), \alpha,q)$   to  
$  \tilde  R_*(\mathcal{K}), \alpha,q-2l)$   induces  a   homomorphism  
from  $ H_*(\mathcal{K}, \alpha,q)$  to  $   H_*(\mathcal{K}, \alpha,q-2l)$. 
The  composition   of  the  chain  maps  induces  the  composition  
of  the  homomorphisms  of  the  constrained homology  groups.  
 By  the  first  assertion  of  Lemma~\ref{pr-bmaq17-thkl},  there   is  a   homomorphism  from  $\wedge^{2*}  ({\partial}/{\partial  s}\mid  s\in  S  )$   to   ${\rm  Hom}_-(H_*(\mathcal{K}, \alpha,q))$.

 A  chain  map  from  $(\tilde  R_*(\mathcal{L}), \omega,q)$   to  
$  (\tilde  R_*(\mathcal{L}), \omega,q+2l)$   induces  a   homomorphism  
from  $ H^*(\mathcal{L}, \omega,q)$  to  $   H^*(\mathcal{L}, \omega,q+2l)$. 
The  composition   of  the  chain  maps  induces  the  composition   
of  the  homomorphisms  of  the  constrained cohomology  groups.  
 By  the  second   assertion  of  Lemma~\ref{pr-bmaq17-thkl},  there   is  a   homomorphism  from  $\wedge^{2*}  (d s \mid  s\in  S  )$   to   ${\rm  Hom}_+(H^*(\mathcal{L}, \omega,q))$.  
 \end{proof}
 
 \begin{theorem}
\label{th-77.3}
 \begin{enumerate}[(1)]
 \item
 Let  $\mathcal{K}$  and  $\mathcal{K}'$  be  augmented  simplicial  complexes  on  $S$  such  that  
 $\mathcal{K}\subseteq  \mathcal{K}'$.  
 The  canonical  inclusion  $\iota: \mathcal{K}\longrightarrow  \mathcal{K}'$  
  induces  a  homomorphism  
  $\iota_*:  H_*(\mathcal{K}, \alpha, q)\longrightarrow  H_*(\mathcal{K}',\alpha,q)$  %
  such  that  for  any  $\beta\in  \wedge^{2l}  ({\partial}/{\partial  s}\mid  s\in  S  ) $,  the  diagram  commutes
  \begin{eqnarray*}
  \xymatrix{
H_*(\mathcal{K}, \alpha,q) \ar[r]^-{\beta}  \ar[d]_-{\iota_*}&   H_*(\mathcal{K}, \alpha,q-2l)\ar[d]^-{\iota_*}\\
H_*(\mathcal{K}', \alpha,q)   \ar[r]^-{\beta}    &H_*(\mathcal{K}', \alpha,q-2l) ;  
  }
  \end{eqnarray*}
  \item
   Let  $\mathcal{L}$  and  $\mathcal{L}'$  be  augmented  independence  hypergraphs  on  $S$  such  that  
 $\mathcal{L}\subseteq  \mathcal{L}'$.  
 The  canonical  inclusion  $\iota: \mathcal{L}\longrightarrow  \mathcal{L}'$  
  induces  a  homomorphism  
  $\iota_*:  H^*(\mathcal{L}, \omega, q)\longrightarrow  H^*(\mathcal{L}',\omega,q)$  %
  such  that  for  any  $\mu\in  \wedge^{2l}  (ds\mid  s\in  S  ) $,  the  diagram  commutes
  \begin{eqnarray*}
  \xymatrix{
H^*(\mathcal{L}, \omega,q) \ar[r]^-{\mu}  \ar[d]_-{\iota_*}&   H^*(\mathcal{L}, \omega,q+2l)\ar[d]^-{\iota_*}\\
H^*(\mathcal{L}',\omega,q)   \ar[r]^-{\mu}    &H^*(\mathcal{L}', \omega,q+2l).    
  }
  \end{eqnarray*}

  \end{enumerate}
 \end{theorem}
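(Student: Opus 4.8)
The plan is to reduce both parts to a single observation: an inclusion of hypergraphs gives an inclusion of the corresponding (co)chain complexes that intertwines the differentials, and this inclusion commutes with the operations $\beta$ (resp.\ $\mu$) already at the chain level, so that the desired squares descend to (co)homology by functoriality of the homology functor.

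First I would treat part~(1). Since $\mathcal{K}\subseteq\mathcal{K}'$, each $R_n(\mathcal{K})$ is a sub-$R$-module of $R_n(\mathcal{K}')$, so the canonical inclusion $\iota$ is a graded $R$-linear map $\tilde R_*(\mathcal{K})\hookrightarrow\tilde R_*(\mathcal{K}')$. Because $\mathcal{K}$ and $\mathcal{K}'$ are augmented simplicial complexes, Lemma~\ref{le-6.2bva}~(1) shows that both $R_*(\mathcal{K})$ and $R_*(\mathcal{K}')$ are $\partial/\partial s$-invariant for every $s\in S$, hence invariant under any $\alpha\in\wedge^{2k+1}(\partial/\partial s\mid s\in S)$; the differentials on $(\tilde R_*(\mathcal{K}),\alpha,q)$ and $(\tilde R_*(\mathcal{K}'),\alpha,q)$ are therefore both restrictions of the single operator $\alpha$ on $\tilde{\mathcal{F}}\langle S\rangle$. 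Consequently $\iota$ intertwines the two differentials, so it is a chain map $(\tilde R_*(\mathcal{K}),\alpha,q)\to(\tilde R_*(\mathcal{K}'),\alpha,q)$ and induces $\iota_*\colon H_*(\mathcal{K},\alpha,q)\to H_*(\mathcal{K}',\alpha,q)$.

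Next I would establish the commuting square. By the same invariance, $\beta$ restricts to the chain maps on both $(\tilde R_*(\mathcal{K}),\alpha,q)$ and $(\tilde R_*(\mathcal{K}'),\alpha,q)$ furnished by Lemma~\ref{pr-bmaq17-thkl}. At the level of chains one has $\iota\circ\beta=\beta\circ\iota$ as maps $R_*(\mathcal{K})\to R_*(\mathcal{K}')$, since each side sends $\xi$ to $\beta(\xi)$ regarded inside $R_*(\mathcal{K}')$; this is merely the statement that $\beta$ is one operator restricted to a submodule and to its ambient module, with $\iota$ the inclusion. Applying the homology functor to this chain-level identity yields the commuting square of part~(1).

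Finally, part~(2) is entirely analogous: the inclusion $\mathcal{L}\subseteq\mathcal{L}'$ of augmented independence hypergraphs gives $R_*(\mathcal{L})\hookrightarrow R_*(\mathcal{L}')$, and Lemma~\ref{le-6.2bva}~(2) guarantees that both are $d s$-invariant, hence invariant under $\omega$, so that $\iota$ is a chain map of the relevant complexes and induces $\iota_*$ on $H^*$; the chain-level identity $\iota\circ\mu=\mu\circ\iota$ then descends to the required commuting square. The only point demanding care is verifying that $\iota$ is genuinely a chain map and that $\beta$ (resp.\ $\mu$) preserves the sub-chain complexes, and both of these reduce to the invariance recorded in Lemma~\ref{le-6.2bva}; once that is in place the argument is purely formal, being the functoriality of taking (co)homology applied to commuting chain maps.
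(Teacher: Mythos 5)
Your proof is correct and follows essentially the same route as the paper's: both arguments establish that the canonical inclusion is a chain map commuting with $\beta$ (resp.\ $\mu$) already at the chain level, and then apply the homology functor to the resulting commutative square of chain complexes. The only cosmetic difference is that the paper cites Corollary~\ref{co-008.3} and Corollary~\ref{co-mzx} for the chain-level facts, whereas you derive them directly from the invariance recorded in Lemma~\ref{le-6.2bva} together with the observation that all the operators involved are restrictions of a single operator on $\tilde{\mathcal{F}}\langle S\rangle_*$; both justifications are sound.
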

   \begin{proof}
 By  Corollary~\ref{co-008.3},   the  canonical  inclusion  $\iota:  \mathcal{K}\longrightarrow \mathcal{K}'$  induces 
   a  chain  map  $\iota_\#:  (\tilde  R_*(\mathcal{K}), \alpha,q)\longrightarrow  (\tilde  R_*(\mathcal{K}'), \alpha,q)  
   $
  such  that    the  diagram  commutes 
  \begin{eqnarray*}
  \xymatrix{
    (\tilde  R_*(\mathcal{K}), \alpha,q)\ar[r]^-{\beta} \ar[d]_-{\iota_\#}&  (\tilde  R_*(\mathcal{K}), \alpha,q-2l)\ar[d]^-{\iota_\#}\\
            (\tilde  R_*(\mathcal{K}'), \alpha,q)\ar[r]^-{\beta}&  (\tilde  R_*(\mathcal{K}), \alpha,q-2l).  
  }
  \end{eqnarray*} 
 Applying  the  homology  of  chain  complexes  to  the  above  diagram,  we  obtain  (1).

 By  Corollary~\ref{co-mzx},   the  canonical  inclusion  $\iota:  \mathcal{L}\longrightarrow \mathcal{L}'$  induces 
   a  chain  map  $\iota_\#:  (\tilde  R^*(\mathcal{L}), \omega,q)\longrightarrow  (\tilde  R^*(\mathcal{L}'), \omega,q)  
   $
  such  that    the  diagram  commutes 
  \begin{eqnarray*}
  \xymatrix{
    (\tilde  R_*(\mathcal{L}), \omega,q)\ar[r]^-{\mu} \ar[d]_-{\iota_\#}&  (\tilde  R_*(\mathcal{L}), \omega,q+2l)\ar[d]^-{\iota_\#}\\
            (\tilde  R_*(\mathcal{L}'), \omega,q)\ar[r]^-{\mu}&  (\tilde  R_*(\mathcal{L}), \omega,q+2l).  
  }
  \end{eqnarray*} 
 Applying  the  homology  of  chain  complexes  to  the  above  diagram,  we  obtain  (2). 
   \end{proof}
   
   Let  $\mathcal{K}_1$  and  $\mathcal{K}_2$  be  augmented  simplicial  complexes  on  $S$.  
We     have  
 a  long  exact  sequence  of  the  constrained  homology  groups
 \begin{eqnarray*}
\cdots \longrightarrow      H_n(\mathcal{K}_1 \cap  \mathcal{K}_2,\alpha, m)\longrightarrow 
  H_n (\mathcal{K}_1,\alpha, m) \oplus   H_n( \mathcal{K}_2,\alpha, m)\longrightarrow  \nonumber\\
   H_n(\mathcal{K}_1 \cup  \mathcal{K}_2,\alpha, m)\longrightarrow H_{n-1}(\mathcal{K}_1 \cap  \mathcal{K}_2,\alpha, m)\longrightarrow \cdots 
   \label{eq-lex1a}
 \end{eqnarray*} 
denoted  as   ${\bf  MV}_*( \mathcal{K}_1,\mathcal{K}_2, \alpha,m)$.   
   Let  $\mathcal{L}_1$  and  $\mathcal{L}_2$  be  augmented  independence  hypergraphs  on  $S$.  
We     have  
 a  long  exact  sequence  of  the  constrained  cohomology  groups
 \begin{eqnarray*}
\cdots \longrightarrow      H^n(\mathcal{L}_1 \cap  \mathcal{L}_2,\omega, m)\longrightarrow 
  H^n (\mathcal{L}_1,\omega, m) \oplus   H^n( \mathcal{L}_2,\omega, m)\longrightarrow  \nonumber \\
   H^n(\mathcal{L}_1 \cup  \mathcal{L}_2,\omega, m)\longrightarrow H^{n+1}(\mathcal{L}_1 \cap  \mathcal{L}_2,\omega, m)\longrightarrow \cdots 
   \label{eq-lex2b}
 \end{eqnarray*} 
denoted  as   ${\bf  MV}^*(\mathcal{L}_1,\mathcal{L}_2, \omega,m)$.   
For  any   augmented  simplicial  complexes  $\mathcal{K}_1$  and  $\mathcal{K}_2$     on  $S$,   
   we  have  a   long  exact  sequence   ${\bf  MV}_*(\mathcal{K}_1,\mathcal{K}_2, \alpha,m)$ 
   as  well  as  long  exact  sequences  ${\bf  MV}^*(\Gamma_S\mathcal{K}_1,\Gamma_S\mathcal{K}_2, \omega,m)$  and  ${\bf  MV}^*(\gamma_S\mathcal{K}_1,\gamma_S\mathcal{K}_2, \omega,m)$.  
    For  any   augmented  simplicial  complexes  $\mathcal{K}_1\subseteq\mathcal{K}'_1$  and  $\mathcal{K}_2\subseteq\mathcal{K}'_2$     on  $S$,  
   the  canonical  inclusions  $\iota$  induce  morphisms  of   long  exact  sequences  
\begin{eqnarray}
\iota:&& {\bf  MV}_*(\mathcal{K}_1,\mathcal{K}_2, \alpha,m)\longrightarrow {\bf  MV}_*(\mathcal{K}'_1,\mathcal{K}'_2, \alpha,m), \nonumber\\
 && {\bf  MV}^*(\Gamma_S\mathcal{K}'_1, \Gamma_S\mathcal{K}'_2, \omega,m)\longrightarrow {\bf  MV}^*(\Gamma_S\mathcal{K}_1,\Gamma_S\mathcal{K}_2, \omega,m), \nonumber\\
 && {\bf  MV}^*(\gamma_S\mathcal{K}'_1, \gamma_S\mathcal{K}'_2, \omega,m)\longrightarrow {\bf  MV}^*(\gamma_S\mathcal{K}_1,\gamma_S\mathcal{K}_2, \omega,m).    
 \label{eq-7.zzzmmm1}
\end{eqnarray}   
  For  any   augmented  independence  hypergraphs  $\mathcal{L}_1$  and  $\mathcal{L}_2$     on  $S$,  
   we  have  a   long  exact  sequence   ${\bf  MV}^*(\mathcal{L}_1,\mathcal{L}_2, \omega,m)$ 
   as  well  as  long  exact  sequences  ${\bf  MV}_*(\Gamma_S\mathcal{L}_1,\Gamma_S\mathcal{L}_2, \alpha,m)$  and  ${\bf  MV}_*(\gamma_S\mathcal{L}_1,\gamma_S\mathcal{L}_2, \alpha,m)$.  
 For  any   augmented  independence  hypergraphs  $\mathcal{L}_1\subseteq\mathcal{L}'_1$  and  $\mathcal{L}_2\subseteq\mathcal{L}'_2$     on  $S$,  
   the  canonical  inclusions  $\iota$  induce  morphisms  of   long  exact  sequences  
\begin{eqnarray}
\iota:&& {\bf  MV}^*(\mathcal{L}_1,\mathcal{L}_2, \omega,m)\longrightarrow {\bf  MV}^*(\mathcal{L}'_1,\mathcal{L}'_2, \omega,m), \nonumber\\
 && {\bf  MV}_*(\Gamma_S\mathcal{L}'_1, \Gamma_S\mathcal{L}'_2, \alpha,m)\longrightarrow {\bf  MV}_*(\Gamma_S\mathcal{L}_1,\Gamma_S\mathcal{L}_2, \alpha,m),\nonumber\\
 &&   {\bf  MV}_*(\gamma_S\mathcal{L}'_1, \gamma_S\mathcal{L}'_2, \alpha,m)\longrightarrow {\bf  MV}_*(\gamma_S\mathcal{L}_1,\gamma_S\mathcal{L}_2, \alpha,m).   
  \label{eq-7.zzzmmm2}
\end{eqnarray}

\begin{example}
Let  $S=\{s_0,s_1, s_2\}$.  
Let  $\alpha =  \sum_{i=0}^3   r_i   \partial/\partial  s_i$   such  that  $r_0,r_1,r_2 \neq  0$.  

\begin{enumerate}[(1)]
\item
Let  $\mathcal{K} = \{ \{s_0\},  \{s_1\},    \{s_0,s_1\}\}$.  
Then  
\begin{eqnarray*}
& \dfrac{\partial }{\partial  s_0}(\{s_0\}) =  \dfrac{\partial }{\partial  s_0}(\{s_1\})= \dfrac{\partial }{\partial  s_1}(\{s_0\}) =\dfrac{\partial }{\partial  s_1}(\{s_1\}) =0,  \\
& \dfrac{\partial }{\partial  s_0}( \{s_0,s_1\})= \{s_1\}, ~~~~~~ 
\dfrac{\partial }{\partial  s_1}( \{s_0,s_1\})=- \{s_0\}
\end{eqnarray*}
which  implies 
\begin{eqnarray*}
{\rm  Im}(\alpha:  R_1(\mathcal{K})\longrightarrow  R_0(\mathcal{K}) )  &=& R ( r_0  \{s_1\} -  r_1\{s_0\}), \\
{\rm  Ker}(\alpha:  R_0(\mathcal{K})\longrightarrow  0 )  &=& R  (  \{s_0\},   \{s_1\}),\\
{\rm  Ker}(\alpha:  R_1(\mathcal{K})\longrightarrow  R_0(\mathcal{K}) )  &=&0
\end{eqnarray*}
and  consequently 
\begin{eqnarray*}
H_n(\mathcal{K}, \alpha, q)= 
\begin{cases}
R,   
&  n=0{\rm~and~} q=0;\\
0,  & {\rm~otherwise}.  
\end{cases} 
\end{eqnarray*}
\item
Let  $\mathcal{K}' = \{\emptyset, \{s_0\},  \{s_1\},  \{s_2\},  \{s_0,s_1\},  \{s_1,s_2\}, \{s_0,s_2\}\}$.  
Then  
\begin{eqnarray*}
&\dfrac{\partial}{\partial  s_i}( \{s_i\})=    \emptyset, ~~~~~~ i=0,1,2, \\
&\dfrac{\partial}{\partial  s_i}( \{s_j\})=0,   ~~~~~~ 0  \leq   i\neq  j  \leq   2,\\
&\dfrac{\partial}{\partial  s_i}( \{s_j,s_k\})=  \begin{cases}
0,  &i\neq  j, k;\\
\{s_k\},  &  i=j;\\
-\{s_j\},  &  i=k
\end{cases}
~~~~~~  0\leq  i,j,k\leq  2{\rm~and~}  j<k
\end{eqnarray*}
which  implies  
\begin{eqnarray*}
{\rm  Im}(\alpha:  R_0(\mathcal{K}')\longrightarrow  R_{-1}(\mathcal{K}') )  &=&   R ((r_0+r_1+r_2)\emptyset),\\
{\rm  Im}(\alpha:  R_1(\mathcal{K}')\longrightarrow  R_0(\mathcal{K}') )  &=& R  ( r_0  \{s_1\} -  r_1\{s_0\},  r_1\{s_2\}- r_2\{s_1\},\\
&& r_0\{s_2\} - r_2\{s_0\} ), \\
{\rm  Ker}(\alpha:  R_0(\mathcal{K}')\longrightarrow  R_{-1}(\mathcal{K}') )  &=&\{   x_0\{s_0\}+
  x_1   \{s_1\}+  x_2 \{s_2\}\mid \\
  &&    x_0,x_1,x_2\in   R {\rm~such~that~}    x_0+x_1+x_2=0 \},\\
{\rm  Ker}(\alpha:  R_1(\mathcal{K}')\longrightarrow  R_0(\mathcal{K}') )  &=&R( 
r_0\{s_1,s_2\} 
  +r_1  \{s_0,s_2\}+   r_2\{s_0,s_1\} )
\end{eqnarray*}
and  consequently  
\begin{eqnarray*}
H_n(\mathcal{K}', \alpha, q)= 
\begin{cases}
R/ (r_0+r_1+r_2)R,  &n=-1{\rm~and~}q=0;\\
R,   
&  n+q=1;\\
0,  & {\rm~otherwise}.  
\end{cases} 
\end{eqnarray*}
\item
Let  $\iota:  \mathcal{K}\longrightarrow \mathcal{K}'$  be  the  canonical  inclusion   induced  by  the  identity  map   on  $S$,  i.e.  $\iota_0={\rm  id}_S$.  
 Then  the   induced  homomorphism   $\iota_*:  H_*(\mathcal{K},\alpha,q)\longrightarrow  H_*(\mathcal{K}',\alpha,q)$   
(cf.  Theorem~\ref{th-77.3}~(1))  
 is  the  zero  map.  
\end{enumerate}

\end{example}

\begin{example}
Let  $S=\{s_0,s_1, s_2\}$.  
Let   
$\omega  = \sum_{i=0}^2   r_i  ds_i $  such  that  $r_0,r_1,r_2\neq  0$.
\begin{enumerate}[(1)]
\item
Let  $\mathcal{L}=\{\{s_0,s_1\},  \{s_0,s_1,s_2\}\}$.   Then  
\begin{eqnarray*}
&  ds_0 (\{s_0,s_1\}) =  ds_1 (\{s_0,s_1\} )=ds_i(\{s_0,s_1,s_2\})=  0, \\
&  ds_2(\{s_0,s_1\})= \{s_0,s_1,s_2\},
\end{eqnarray*}
$i=0,1,2$,   
which  implies 
\begin{eqnarray*}
{\rm  Im}(\omega:  R_1(\mathcal{L})\longrightarrow  R_2(\mathcal{L}) )=  R(r_2 \{s_0,s_1,s_2\})
\end{eqnarray*}
and  consequently  
\begin{eqnarray*}
H^n(\mathcal{L},\omega,q)=\begin{cases}
R/  r_2 R,  & n+q = 2;\\
0, &{\rm~otherwise}. 
\end{cases}
\end{eqnarray*}
\item
Let  $\mathcal{L}'=\{\{s_0,s_1\}, \{s_0,s_2\},   \{s_0,s_1,s_2\}\}$.  
Then  
\begin{eqnarray*}
&  ds_0 (\{s_0,s_1\})=  ds_1 (\{s_0,s_1\} ) \\
&=ds_0 (\{s_0,s_2\})= ds_2 (\{s_0,s_2\})  =ds_i(\{s_0,s_1,s_2\})=  0,\\
&ds_2(\{s_0,s_1\})=-ds_1(\{s_0,s_2\})= \{s_0,s_1,s_2\}
\end{eqnarray*}
which  implies  
\begin{eqnarray*}
&{\rm  Im}(\omega:  R_1(\mathcal{L}')\longrightarrow  R_2(\mathcal{L}') )=  (r_1,r_2) (\{s_0,s_1,s_2\}),\\
&{\rm   Ker}  (\omega:  R_1(\mathcal{L}')\longrightarrow  R_2(\mathcal{L}') ) =
R((r_1-r_2)  (\{s_0,s_1,s_2\})), 
\end{eqnarray*}
where  $(r_1,r_2)$  is  the  ideal  generated  by  $\{r_1,r_2\}$,   and   consequently  
\begin{eqnarray*}
H^n(\mathcal{L}',\omega,q)=\begin{cases}
R/  (r_1, r_2),  & n+q = 2;\\
0, &{\rm~otherwise}. 
\end{cases}
\end{eqnarray*}  

\item
Let  $\iota:  \mathcal{L}\longrightarrow  \mathcal{L}'$  be  the  canonical  inclusion  induced  by  the  identity  map   on  $S$,  i.e.  $\iota_0={\rm  id}_S$.   
Then  the   induced  homomorphism   $\iota_*:  H^n(\mathcal{L},\omega,q)\longrightarrow  H^n(\mathcal{L}',\omega,q)$   
(cf.  Theorem~\ref{th-77.3}~(2))  
 is  the     canonical  inclusion  $R/r_2 R\longrightarrow  R/ (r_1,r_2)$  for  $n+q=2$.  
\end{enumerate}
\end{example}

\section{The   constrained  persistent   homology}\label{s--8}

Let  $\{\mathcal{K}_x\}_{x\in\mathbb{R}}$  be  a  filtration  of  augmented  simplicial  complexes  such  that  
    \begin{enumerate}[(1).]
    \item
    for  any   $x\in  \mathbb{R}$,   $\mathcal{K}_x$  is  an  augmented    simplicial complex  with its  vertices from  $S$; 
    \item  
   for  any  $-\infty<x\leq  y<+\infty$,  there  is  a  canonical  inclusion  $\iota_x^y:   \mathcal{K}_x\longrightarrow \mathcal{K}_y$    induced  by  the  identity  map  on  $S$    satisfying  $\iota_x^x= {\rm  id}$   for  any  $x\in\mathbb{R}$  and    $\iota_x^z  =  \iota_y^z \circ  \iota_x^y$  for  any   $-\infty<x\leq  y\leq  z<+\infty$.   
      \end{enumerate}
 Let $\alpha\in {\rm Ext}_{2t+1}(S)$  and  $\beta\in {\rm Ext}_{2s}(S)$.          We  have  a  family  of  constrained  homology  groups 
 \begin{eqnarray}\label{eq-9.876}
 \{H_n(\mathcal{K}_x,\alpha,m)\}_{x\in\mathbb{R}}
 \end{eqnarray}
   and  a  family  of  homomorphisms  of  homology  groups 
 \begin{eqnarray}\label{eq-9.878}
 \{(\iota_x^y)_*:  H_n(\mathcal{K}_x,\alpha,m)\longrightarrow  H_n(\mathcal{K}_y,  \alpha, m)\}_{-\infty<x\leq  y<+\infty}.   
      \end{eqnarray}
      For  any  $-\infty<x\leq  y<+\infty$,   by  Theorem~\ref{th-77.3}~(1),   we  have  a  commutative diagram 
 \begin{eqnarray}\label{diag-zzm1}
 \xymatrix{
 H_n(\mathcal{K}_x,\alpha,m) \ar[r]  ^-{\beta_*}  \ar[d]_{(\iota_x^y)_*}& H_n(\mathcal{K}_x,\alpha,m-2s) \ar[d]^{(\iota_x^y)_*}\\
 H_n(\mathcal{K}_y, \alpha,m) \ar[r]  ^-{\beta_*}  & H_n(\mathcal{K}_y,  \alpha,m-2s).    
 }
 \end{eqnarray}
    We  call  (\ref{eq-9.876})  together  with  (\ref{eq-9.878})  the $n$-th  {\it  constrained  persistent homology}   of   
      $\{\mathcal{K}_x\}_{x\in\mathbb{R}}$  with  respect   to   $\alpha$  and  $m$  and  denote  it  as   
 ${\bf  H}_n(\mathcal{K}_x,\alpha,m\mid  x\in\mathbb{R})$.         
 By  the  commutative  diagram  (\ref{diag-zzm1}),  
   we  have  an  induced   homomorphism  of  persistent  $R$-modules    (cf.  \cite[Section~1.3,  Module categories]{pmd})
      \begin{eqnarray*}
      \beta_*:  ~~~{\bf  H}_n(\mathcal{K}_x,\alpha,m\mid  x\in\mathbb{R})\longrightarrow  {\bf  H}_n(\mathcal{K}_x,\alpha,m-2s\mid  x\in\mathbb{R}).  
      \end{eqnarray*}
          Let  $\{\mathcal{L}_x\}_{x\in\mathbb{R}}$  be  a  filtration  of   augmented    independence  hypergraphs  such  that  
    \begin{enumerate}[(1).]
    \item
    for  any   $x\in  \mathbb{R}$,   $\mathcal{L}_x$  is  an   augmented   independence  hypergraph  with its  vertices from  $S$; 
    \item  
   for  any  $-\infty<x\leq  y<+\infty$,  there  is  a  canonical  inclusion  $\theta_x^y:   \mathcal{L}_x\longrightarrow \mathcal{L}_y$  induced  by  the  identity  map  on  $S$   satisfying  $\theta_x^x= {\rm  id}$   for  any  $x\in\mathbb{R}$  and    $\theta_x^z  =  \theta_y^z \circ \theta_x^y$  for  any   $-\infty<x\leq  y\leq  z<+\infty$.      
      \end{enumerate}  
 Let $\omega\in {\rm Ext}^{2t+1}(S)$  and  $\mu\in {\rm Ext}^{2s}(S)$.     We  have  a  family  of  constrained   cohomology  groups 
 \begin{eqnarray}\label{eq-9.976}
 \{H^n(\mathcal{L}_x,\omega,m)\}_{x\in\mathbb{R}}
 \end{eqnarray}
   and  a  family  of  homomorphisms  of  cohomology   groups  
 \begin{eqnarray}\label{eq-9.978}
 \{(\theta_x^y)_*:  H^n(\mathcal{L}_x,\omega,m)\longrightarrow  H^n(\mathcal{L}_y,  \omega, m)\}_{-\infty<x\leq  y<+\infty}.   
      \end{eqnarray}
       For  any  $-\infty<x\leq  y<+\infty$,   by  Theorem~\ref{th-77.3}~(2),  we  have  a  commutative diagram 
 \begin{eqnarray}\label{diag-zzm2}
 \xymatrix{
 H^n(\mathcal{L}_x,\omega,m) \ar[r]  ^-{\mu_*}  \ar[d]_{(\theta_x^y)_*}& H^n(\mathcal{L}_x,\omega,m+2s) \ar[d]^{(\theta_x^y)_*}\\
 H^n(\mathcal{L}_y, \omega),m  \ar[r]  ^-{\mu_*}  & H^n(\mathcal{L}_y,\omega,m+2s).    
 }
 \end{eqnarray}
      We  call  (\ref{eq-9.976})  together  with  (\ref{eq-9.978})  the $n$-th  {\it  constrained  persistent  cohomology}   of   
      $\{\mathcal{L}_x\}_{x\in\mathbb{R}}$  with  respect   to   $\omega$  and  $m$  and  denote  it  as   
 ${\bf  H}^n(\mathcal{L}_x,\omega,m\mid  x\in\mathbb{R})$.       
 By  the  commutative  diagram  (\ref{diag-zzm2}),  
   we  have  an  induced   homomorphism  of  persistent  $R$-modules  
      \begin{eqnarray*}
      \mu_*: ~~~ {\bf  H}^n(\mathcal{L}_x,\omega,m\mid  x\in\mathbb{R})\longrightarrow  {\bf  H}^n(\mathcal{L}_x,\omega,m+2s\mid  x\in\mathbb{R}).  
      \end{eqnarray*}

       Given  two  graded  persistent   $R$-modules  ${\bf  G}$  and  ${\bf  H}$,   let  ${\rm  Hom}({\bf G},{\bf  H})$  
 be  the  $R$-module  consisting  of  all  persistent  homomorphisms  from  ${\bf  G}$  to  ${\bf  H}$.      Consider  the  $R$-modules 
 \begin{eqnarray*}
 && {\rm  Hom}( {\bf  H}_*(\mathcal{K}_x, \alpha,q\mid  x\in \mathbb{R}),   {\bf  H}_*(\mathcal{K}_x, \alpha,q-2l\mid  x\in \mathbb{R})), \\
 &&   {\rm  Hom}({\bf  H}^*(\mathcal{L}_x, \omega,q\mid  x\in \mathbb{R}),  {\bf  H}^*(\mathcal{L}_x, \omega,q+2l\mid  x\in \mathbb{R})).   
   \end{eqnarray*} 
 Take  the  direct  sums   
 \begin{eqnarray*}
  {\rm  Hom}_-({\bf  H}_*(\mathcal{K}_x, \alpha,q\mid  x\in \mathbb{R}))&=&\bigoplus_{l=0}^\infty     {\rm  Hom}( {\bf  H}_*(\mathcal{K}_x, \alpha,q\mid  x\in \mathbb{R}),   {\bf  H}_*(\mathcal{K}_x, \alpha,q-2l\mid  x\in \mathbb{R})),\\
   {\rm  Hom}_+({\bf  H}^*(\mathcal{L}_x, \omega,q\mid  x\in \mathbb{R}))&=&\bigoplus_{l=0}^\infty   {\rm  Hom}({\bf  H}^*(\mathcal{L}_x, \omega,q\mid  x\in \mathbb{R}),  {\bf  H}^*(\mathcal{L}_x, \omega,q+2l\mid  x\in \mathbb{R})).
   \end{eqnarray*}
   Similar with  (\ref{eq-mmvdaq1}),   define  the  multiplication   $\varphi_1\circ\varphi_2$   for  any   
\begin{eqnarray*}
&\varphi_1\in  {\rm  Hom}( {\bf  H}_*(\mathcal{K}_x, \alpha,q-2l_2\mid  x\in \mathbb{R}),   {\bf  H}_*(\mathcal{K}_x, \alpha,q-2(l_1+l_2)\mid  x\in \mathbb{R})),\\
&\varphi_2\in {\rm  Hom}( {\bf  H}_*(\mathcal{K}_x, \alpha,q\mid  x\in \mathbb{R}),   {\bf  H}_*(\mathcal{K}_x, \alpha,q-2l_2\mid  x\in \mathbb{R})).  
\end{eqnarray*}  
Similar  with  (\ref{eq-mmvdaq2}),  define  the  multiplication  $\psi_1\circ\psi_2$   
for    any   
\begin{eqnarray*}
&\psi_1\in  {\rm  Hom}({\bf  H}^*(\mathcal{L}_x, \omega,q+2l_2\mid  x\in \mathbb{R}),  {\bf  H}^*(\mathcal{L}_x, \omega,q+2(l_1+l_2)\mid  x\in \mathbb{R})),\\
&\psi_2\in  {\rm  Hom}({\bf  H}^*(\mathcal{L}_x, \omega,q\mid  x\in \mathbb{R}),  {\bf  H}^*(\mathcal{L}_x, \omega,q+2l_2\mid  x\in \mathbb{R})).  
\end{eqnarray*}
  The  next  corollary  follows  from  Theorem~\ref{th-1daza}   immediately.  
 \begin{corollary}\label{co-8.1}
There   is  a   homomorphism  from  $\wedge^{2*}  ({\partial}/{\partial  s}\mid  s\in  S  )$   to   ${\rm  Hom}_-({\bf  H}_*(\mathcal{K}_x, \alpha,q\mid  x\in \mathbb{R}))$  as  well  as    a  homomorphism  from  $\wedge^{2*}   ( d s  \mid  s\in  S  )$   to   $  {\rm  Hom}_+({\bf  H}^*(\mathcal{L}_x, \omega,q\mid  x\in \mathbb{R}))$.
\end{corollary}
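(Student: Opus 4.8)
The plan is to deduce the persistent statement directly from its single-object counterpart, Theorem~\ref{th-1daza}, by checking that each induced map respects the persistence structure. Concretely, I would exhibit for each $\beta\in\wedge^{2l}(\partial/\partial s\mid s\in S)$ a morphism of persistent $R$-modules, and then verify that the assignment $\beta\mapsto\beta_*$ is multiplicative; the argument for $\mu\in\wedge^{2l}(ds\mid s\in S)$ will be entirely parallel.

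First I would fix $\beta\in\wedge^{2l}(\partial/\partial s\mid s\in S)$ and apply Theorem~\ref{th-1daza} at each parameter value $x\in\mathbb{R}$, obtaining a homomorphism
\begin{eqnarray*}
\beta_*:  H_*(\mathcal{K}_x,\alpha,q)\longrightarrow H_*(\mathcal{K}_x,\alpha,q-2l).
\end{eqnarray*}
Next I would observe that the commutative diagram (\ref{diag-zzm1}), which holds for every pair $-\infty<x\le y<+\infty$ by Theorem~\ref{th-77.3}~(1), states exactly that this family of maps commutes with the structure maps $(\iota_x^y)_*$. Hence the family assembles into a single morphism of persistent $R$-modules
\begin{eqnarray*}
\beta_*:  {\bf H}_*(\mathcal{K}_x,\alpha,q\mid x\in\mathbb{R})\longrightarrow {\bf H}_*(\mathcal{K}_x,\alpha,q-2l\mid x\in\mathbb{R}),
\end{eqnarray*}
that is, an element of ${\rm Hom}_-({\bf H}_*(\mathcal{K}_x,\alpha,q\mid x\in\mathbb{R}))$. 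This defines the required map of graded $R$-modules. The parallel construction for $\mu$ uses the second assertion of Theorem~\ref{th-1daza} together with the commutative diagram (\ref{diag-zzm2}) from Theorem~\ref{th-77.3}~(2), producing a morphism $\mu_*$ of persistent cohomology modules and hence an element of ${\rm Hom}_+({\bf H}^*(\mathcal{L}_x,\omega,q\mid x\in\mathbb{R}))$.

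Finally I would check that $\beta\mapsto\beta_*$ preserves the product. Since composition of morphisms of persistent $R$-modules is carried out parameterwise in $x$, the identity $(\beta_1\wedge\beta_2)_*=(\beta_1)_*\circ(\beta_2)_*$ on persistent modules reduces, for each fixed $x$, to the corresponding identity on $H_*(\mathcal{K}_x,\alpha,q)$, which is precisely the multiplicativity already established in Theorem~\ref{th-1daza}; the same reduction handles $\mu$. The only step demanding genuine attention---and the reason the corollary follows immediately rather than by a fresh argument---is the verification that the levelwise maps are compatible with the persistence structure, which is exactly what the commutative diagrams (\ref{diag-zzm1}) and (\ref{diag-zzm2}) supply. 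Thus no new homological computation is needed beyond what Theorem~\ref{th-1daza} and Theorem~\ref{th-77.3} already provide.
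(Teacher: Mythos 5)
Your proof is correct and follows the same route as the paper, which simply asserts that the corollary follows immediately from Theorem~\ref{th-1daza}; your write-up makes explicit the two ingredients that "immediately" relies on, namely the levelwise application of Theorem~\ref{th-1daza} together with the compatibility diagrams (\ref{diag-zzm1}) and (\ref{diag-zzm2}) supplied by Theorem~\ref{th-77.3}, and the parameterwise reduction of multiplicativity. Nothing further is needed.
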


     Let   
          $\{\mathcal{K}_x\}_{x\in\mathbb{R}}$  and  $\{\mathcal{K}'_x\}_{x\in\mathbb{R}}$  
          be  two  filtrations  of   augmented    simplicial complexes 
           where  the  canonical  inclusions  are   induced  by  the  identity  map 
           on  the  vertex  set   $S$.    
          Then  both  $\{\mathcal{K}_x\cap \mathcal{K}'_x\}_{x\in\mathbb{R}}$  and  
            $\{\mathcal{K}_x\cup \mathcal{K}'_x\}_{x\in\mathbb{R}}$   
            are  filtrations  of   augmented    simplicial  complexes  where  the  canonical  inclusions  are
             induced  by  the  identity  map 
           on   $S$.   
            For  any  $-\infty<x\leq  y<+\infty$,  the  canonical  inclusions  
            $\iota_x^y: \mathcal{K}_x\longrightarrow  \mathcal{K}_y$  and 
              ${\iota'}_x^y: \mathcal{K}'_x\longrightarrow  \mathcal{K}'_y$  
              induces  a  morphism  of  the Mayer-Vietoris 
            sequences   of the constrained  homology  
            \begin{eqnarray}\label{eq-1oiu}
            (\iota_x^y, {\iota'}_x^y)_*: ~~~  {\bf  MV}_*(\mathcal{K}_x,\mathcal{K}'_x,  \alpha,m)\longrightarrow {\bf  MV}_*(\mathcal{K}_y,\mathcal{K}'_y,\alpha,m). 
            \end{eqnarray}
            We  call the  family  $\{{\bf  MV}_*(\mathcal{K}_x,\mathcal{K}'_x,\alpha,m)\}_{x\in\mathbb{R}}$   together with the  family  of  morphisms     given by  (\ref{eq-1oiu})  the  
            {\it    persistent  Mayer-Vietoris  sequence}  of  the  constrained  persistent homology  for the filtrations $\{\mathcal{K}_x\}_{x\in\mathbb{R}}$ and  $\{\mathcal{K}'_x\}_{x\in\mathbb{R}}$  and  denote it  as  
            \begin{eqnarray*}
            {\bf   PMV}_*(\mathcal{K}_x,\mathcal{K}'_x,  \alpha,m\mid  x\in\mathbb{R}).  
            \end{eqnarray*}    
           Similarly,  let   
          $\{\mathcal{L}_x\}_{x\in\mathbb{R}}$  and  $\{\mathcal{L}'_x\}_{x\in\mathbb{R}}$  
          be  two  filtrations  of   augmented    independence hypergraphs    where  the  canonical  inclusions  are   induced  by  the  identity  map 
           on  the  vertex  set   $S$.    
           Then  both  $\{\mathcal{L}_x\cap \mathcal{L}'_x\}_{x\in\mathbb{R}}$  and  
            $\{\mathcal{L}_x\cup \mathcal{L}'_x\}_{x\in\mathbb{R}}$   
            are  filtrations  of   augmented    independence  hypergraphs     where  the  canonical  inclusions  are   induced  by  the  identity  map 
           on  the  vertex  set   $S$.    
            For  any  $-\infty<x\leq  y<+\infty$,  the  canonical  inclusions  
            $\theta_x^y: \mathcal{L}_x\longrightarrow  \mathcal{L}_y$  and 
              ${\theta'}_x^y: \mathcal{L}'_x\longrightarrow  \mathcal{L}'_y$  
              induces  a  morphism  of  the Mayer-Vietoris 
            sequences   of the constrained  homology  
            \begin{eqnarray}\label{eq-2oiu}
            (\theta_x^y,{\theta'}_x^y)_*: ~~~  {\bf  MV}^*(\mathcal{L}_x,\mathcal{L}'_x,  \omega,m)\longrightarrow {\bf  MV}^*(\mathcal{L}_y,\mathcal{L}'_y,\omega,m). 
            \end{eqnarray}
            We  call the  family  $\{{\bf  MV}^*(\mathcal{L}_x,\mathcal{L}'_x,\alpha,m)\}_{x\in\mathbb{R}}$   together with the  family  of  morphisms     given by  (\ref{eq-2oiu})  the  
            {\it    persistent  Mayer-Vietoris  sequence}  of  the  constrained  persistent  cohomology  for the filtrations $\{\mathcal{L}_x\}_{x\in\mathbb{R}}$ and  $\{\mathcal{L}'_x\}_{x\in\mathbb{R}}$  and  denote it  as  
            \begin{eqnarray*}
            {\bf   PMV}^*(\mathcal{L}_x,\mathcal{L}'_x,  \omega,m\mid  x\in\mathbb{R}).  
            \end{eqnarray*}

              The  next  corollary  follows  from   (\ref{eq-7.zzzmmm1})    immedaitely.
            \begin{corollary}\label{co-8.2}
              For  any  filtrations    $\{\mathcal{K}_x\}_{x\in\mathbb{R}}$  and  $\{\mathcal{K}'_x\}_{x\in\mathbb{R}}$  
          of   augmented    simplicial complexes 
           where  the  canonical  inclusions  are   induced  by  the  identity  map 
           on  the  vertices,   
              we  have  persistent  Mayer-Vietoris  sequences 
               \begin{eqnarray*}
                   &        {\bf   PMV}_*(\mathcal{K}_x,\mathcal{K}'_x,  \alpha,m\mid  x\in\mathbb{R}),\\
 & {\bf  PMV}^*(\Gamma_S\mathcal{K}_x, \Gamma_S\mathcal{K}'_x, \omega,m\mid  x\in \mathbb{R}), \\
 & {\bf  PMV}^*(\gamma_S\mathcal{K}_x, \gamma_S\mathcal{K}'_x, \omega,m\mid  x\in\mathbb{R}).  
\end{eqnarray*}   
            \end{corollary}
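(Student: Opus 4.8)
The plan is to obtain all three persistent Mayer--Vietoris sequences by feeding consecutive stages of the two filtrations into the morphisms of Mayer--Vietoris sequences already recorded in (\ref{eq-7.zzzmmm1}). First I would fix a pair $-\infty < x \le y < +\infty$. By the filtration axioms for $\{\mathcal{K}_x\}_{x\in\mathbb{R}}$ and $\{\mathcal{K}'_x\}_{x\in\mathbb{R}}$, the canonical inclusions give $\mathcal{K}_x \subseteq \mathcal{K}_y$ and $\mathcal{K}'_x \subseteq \mathcal{K}'_y$ as augmented simplicial complexes on $S$. Hence the quadruple $(\mathcal{K}_x, \mathcal{K}'_x, \mathcal{K}_y, \mathcal{K}'_y)$ is an admissible input for (\ref{eq-7.zzzmmm1}), playing the role of $(\mathcal{K}_1, \mathcal{K}_2, \mathcal{K}'_1, \mathcal{K}'_2)$ there.

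Next I would read off the three induced morphisms. The first line of (\ref{eq-7.zzzmmm1}) produces
\[
\iota: {\bf MV}_*(\mathcal{K}_x, \mathcal{K}'_x, \alpha, m) \longrightarrow {\bf MV}_*(\mathcal{K}_y, \mathcal{K}'_y, \alpha, m),
\]
which is precisely the morphism (\ref{eq-1oiu}) defining ${\bf PMV}_*(\mathcal{K}_x, \mathcal{K}'_x, \alpha, m \mid x\in\mathbb{R})$. The second and third lines produce
\[
{\bf MV}^*(\Gamma_S\mathcal{K}_y, \Gamma_S\mathcal{K}'_y, \omega, m) \longrightarrow {\bf MV}^*(\Gamma_S\mathcal{K}_x, \Gamma_S\mathcal{K}'_x, \omega, m)
\]
together with its analogue with $\gamma_S$ in place of $\Gamma_S$. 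Since $\Gamma_S$ and $\gamma_S$ carry ${\bf K}(S)$ into ${\bf L}(S)$, these are genuinely Mayer--Vietoris sequences of the constrained cohomology of augmented independence hypergraphs, so the displayed arrows serve as the connecting maps of ${\bf PMV}^*(\Gamma_S\mathcal{K}_x, \Gamma_S\mathcal{K}'_x, \omega, m\mid x\in\mathbb{R})$ and ${\bf PMV}^*(\gamma_S\mathcal{K}_x, \gamma_S\mathcal{K}'_x, \omega, m\mid x\in\mathbb{R})$. Because $\Gamma_S$ and $\gamma_S$ reverse inclusions, the two cohomological families are persistent with respect to the opposite order on the index set $\mathbb{R}$, which is the correct variance for these complements.

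Finally I would verify that these morphisms assemble into genuine persistent objects, that is, that they satisfy $\iota_x^x = \mathrm{id}$ and $\iota_x^z = \iota_y^z \circ \iota_x^y$ for $x \le y \le z$. Each morphism appearing in (\ref{eq-7.zzzmmm1}) is induced on (co)homology by the chain-level inclusion $\iota_\#$ of Theorem~\ref{th-77.3}, and these chain maps compose according to the filtration axioms $\iota_x^x = \mathrm{id}$ and $\iota_x^z = \iota_y^z \circ \iota_x^y$; applying the (co)homology functor and the naturality already built into (\ref{eq-7.zzzmmm1}) transports this compatibility to the three Mayer--Vietoris sequences. I expect no genuine obstacle here: the only point requiring care is the bookkeeping of variance for the two complement families, which is a matter of recording the direction of the arrows rather than a substantive difficulty. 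With this check in place the corollary follows immediately from (\ref{eq-7.zzzmmm1}).
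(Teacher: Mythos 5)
Your proposal is correct and follows the same route as the paper, which simply observes that the corollary is an immediate consequence of the morphisms of Mayer--Vietoris sequences in (\ref{eq-7.zzzmmm1}) applied to the consecutive stages $\mathcal{K}_x\subseteq\mathcal{K}_y$ and $\mathcal{K}'_x\subseteq\mathcal{K}'_y$ of the filtrations. Your additional remarks on verifying the persistence axioms and on the reversed variance of the $\Gamma_S$ and $\gamma_S$ families are exactly the bookkeeping the paper leaves implicit.
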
        
            
            The  next  corollary  follows  from   (\ref{eq-7.zzzmmm2})  immedaitely.  
            \begin{corollary}\label{co-8.3}
              For  any  filtrations    $\{\mathcal{L}_x\}_{x\in\mathbb{R}}$  and  $\{\mathcal{L}'_x\}_{x\in\mathbb{R}}$  
          of   augmented    independence  hypergraphs  
           where  the  canonical  inclusions  are   induced  by  the  identity  map 
           on  the  vertices,   
              we  have  persistent  Mayer-Vietoris  sequences 
               \begin{eqnarray*}
                   &       {\bf   PMV}^*(\mathcal{L}_x,\mathcal{L}'_x,  \omega,m\mid  x\in\mathbb{R}),\\
 & {\bf  PMV}_*(\Gamma_S\mathcal{L}_x, \Gamma_S\mathcal{L}'_x, \alpha,m\mid  x\in \mathbb{R}), \\
 & {\bf  PMV}_*(\gamma_S\mathcal{K}_x, \gamma_S\mathcal{K}'_x, \alpha,m\mid  x\in\mathbb{R}).  
\end{eqnarray*}   
            \end{corollary}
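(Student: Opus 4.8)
The plan is to assemble the levelwise Mayer--Vietoris sequences over the filtration parameter and to check that the resulting transition maps obey the identity and composition laws, so that each family becomes a persistent object in the sense of the definition preceding Corollary~\ref{co-8.2}. Fix the two filtrations $\{\mathcal{L}_x\}_{x\in\mathbb{R}}$ and $\{\mathcal{L}'_x\}_{x\in\mathbb{R}}$ and let $-\infty<x\le y<+\infty$. Since $\mathcal{L}_x\cap\mathcal{L}'_x$ and $\mathcal{L}_x\cup\mathcal{L}'_x$ are again augmented independence hypergraphs, and since both $\Gamma_S$ and $\gamma_S$ send ${\bf L}(S)$ into ${\bf K}(S)$, the three levelwise sequences ${\bf MV}^*(\mathcal{L}_x,\mathcal{L}'_x,\omega,m)$, ${\bf MV}_*(\Gamma_S\mathcal{L}_x,\Gamma_S\mathcal{L}'_x,\alpha,m)$ and ${\bf MV}_*(\gamma_S\mathcal{L}_x,\gamma_S\mathcal{L}'_x,\alpha,m)$ are already available from Section~\ref{s---7}. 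Thus only the transition maps and their functoriality remain to be produced.

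First I would feed the pair of canonical inclusions $\theta_x^y:\mathcal{L}_x\hookrightarrow\mathcal{L}_y$ and ${\theta'}_x^y:\mathcal{L}'_x\hookrightarrow\mathcal{L}'_y$ into (\ref{eq-7.zzzmmm2}). Its first line returns a morphism of long exact sequences ${\bf MV}^*(\mathcal{L}_x,\mathcal{L}'_x,\omega,m)\to{\bf MV}^*(\mathcal{L}_y,\mathcal{L}'_y,\omega,m)$, which is precisely the map (\ref{eq-2oiu}); its second and third lines return the analogous morphisms for the $\Gamma_S$- and $\gamma_S$-complemented data. At this step one must track variance carefully: $\Gamma_S$ preserves inclusions, so $\{\Gamma_S\mathcal{L}_x\}$ is again an increasing filtration of augmented simplicial complexes, whereas $\gamma_S=[2^S]\setminus(-)$ reverses inclusions, so $\{\gamma_S\mathcal{L}_x\}$ is a filtration for the opposite order; in each case the direction in which the transition morphism runs is exactly the one dictated by (\ref{eq-7.zzzmmm2}). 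This produces, for every $x\le y$, the required families of morphisms of Mayer--Vietoris sequences.

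The substantive point, and the step I expect to need the most care, is functoriality: that $(\theta_x^x,{\theta'}_x^x)_*={\rm id}$ and $(\theta_x^z,{\theta'}_x^z)_*=(\theta_y^z,{\theta'}_y^z)_*\circ(\theta_x^y,{\theta'}_x^y)_*$ for $x\le y\le z$. Because all three filtrations live on the single vertex set $S$ with transitions induced by ${\rm id}_S$, the chain maps underlying (\ref{eq-7.zzzmmm2}) are, through Corollary~\ref{co-mzx} (for the $\omega$-term) and Corollary~\ref{co-008.3} (for the two $\alpha$-terms), nothing but the inclusions of sub-chain-complexes $\tilde R_*(\mathcal{L}_x)\hookrightarrow\tilde R_*(\mathcal{L}_y)$ and their $\Gamma_S$-, $\gamma_S$-analogues; such inclusions compose to inclusions and reduce to the identity when $x=y$, so the two laws hold at chain level and descend to (co)homology. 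The only thing left to confirm is that the connecting homomorphisms of the Mayer--Vietoris sequences are natural with respect to these inclusions, i.e.\ that the vertical arrows commute with the boundary maps; this is the standard naturality of the snake-lemma boundary applied to the commutative ladder of short exact sequences $0\to\tilde R_*(\mathcal{L}_x\cap\mathcal{L}'_x)\to\tilde R_*(\mathcal{L}_x)\oplus\tilde R_*(\mathcal{L}'_x)\to\tilde R_*(\mathcal{L}_x\cup\mathcal{L}'_x)\to0$ and its complemented versions. With functoriality established, the three families together with their transition morphisms are the asserted persistent Mayer--Vietoris sequences, and the corollary follows.
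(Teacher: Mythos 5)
Your argument is correct and follows the paper's own route: the paper proves this corollary by simply citing (\ref{eq-7.zzzmmm2}), and your proposal is exactly that citation with the implicit details (the identity and composition laws for the transition morphisms, the naturality of the connecting homomorphisms, and the variance of $\Gamma_S$ versus $\gamma_S$) spelled out. No gap; you have merely expanded what the paper declares ``immediate.''
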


 \bigskip
 
Shiquan Ren 

Address:   School of Mathematics and Statistics,  Henan University,  Kaifeng  475004,  China. 

E-mail:  renshiquan@henu.edu.cn

\end{document}